\documentclass[12pt,bezier]{article}
\usepackage{tikz}
\usetikzlibrary{shapes.geometric,positioning,calc,decorations.pathreplacing,fit}
\usepackage[margin=0cm]{geometry}
\usepackage{times}
\usepackage{booktabs}
\usepackage{pifont}
\usepackage{floatrow}
\usepackage{caption}
\usepackage{mathrsfs}
\usepackage{enumitem}
\usepackage[fleqn]{amsmath}
\usepackage{breqn}
\usepackage{amsfonts,amsthm,amssymb,mathrsfs,bbding}
\usepackage{txfonts}
\usepackage{graphics,multicol}
\usepackage{graphicx}
\usepackage{color}
\usepackage{amssymb}
\usepackage{amsmath}      
\usepackage{caption}
\usepackage{subcaption}
\usepackage{cite}
\usepackage{latexsym,bm}
\usepackage{indentfirst}
\usepackage{color}
\usepackage[colorlinks=true,anchorcolor=blue,filecolor=blue,linkcolor=blue,urlcolor=blue,citecolor=blue]{hyperref}
\usepackage{extarrows}
\usepackage{cite}
\usepackage{latexsym,bm}
\usepackage{mathtools}
\evensidemargin=\oddsidemargin\topmargin=-1.5cm

\newtheorem{thm}{Theorem}[section]

\newtheorem{prob}{Problem}[section]
\newtheorem{corollary}{Corollary}[section]

\newtheorem{lem}{Lemma}[section]

\theoremstyle{definition}

\addtocounter{section}{0}

\begin{document}
\title{Spectral extremal problems on $[a,b]$-factors in binding graphs, and beyond\footnote{Supported by the National Natural Science Foundation of China
{(No. 12371361)} and Distinguished Youth Foundation of Henan Province {(No. 242300421045)}.}}
\author{{\bf Ting Xu}, {\bf Ruifang Liu\thanks{Corresponding author.
E-mail addresses: xuting2001@126.com (T. Xu), rfliu@zzu.edu.cn (R. Liu).}}\\
{\footnotesize School of Mathematics and Statistics, Zhengzhou University, Zhengzhou, Henan 450001, China}}
\date{}

\maketitle
{\flushleft\large\bf Abstract}
The {\it binding number} $b(G)$ of a graph $G$ is the minimum value of $|N_{G}(X)|/|X|$ taken over all nonempty sets $X\subseteq V(G)$ such that $N_{G}(X)\neq V(G)$. A graph $G$ is {\it 1-binding} if $b(G)\geq 1$.  For integers $1\leq a \leq b$, an {\it $[a,b]$-factor} of $G$ is a spanning subgraph $H$ such that for each vertex $v \in V(G)$, $a \leq d_H(v) \leq b$. If $a=b=k$, an $[a,b]$-factor is a $k$-factor. Fan and Lin [Electron. J. Combin. 31 (2024) P1.30] characterized the unique spectral extremal graphs among 1-binding graphs with $\delta(G)\geq k$ that contain no $k$-factor for $k=1$, $2$. Subsequently, Tang and Zhang solved the problem for $k\geq 3$. Note that $\delta(G)\geq a$ is a necessary condition for the existence of an $[a,b]$-factor in a graph $G$. For $1\leq a<b$, Fan et al. [Discrete Math. 349 (2026) 115103] determined the unique spectral extremal graph $H_n^{a,b}$ among connected graphs with $\delta(G)\geq a$ that contain no $[a,b]$-factor. Note that $b(H_n^{a,b})<1$. This motivates the corresponding spectral extremal problem for 1-binding graphs. Note that every 1-binding graph with $\delta(G)\geq 1$ contains a $[1,b]$-factor for $b>1$. For $2\leq a<b$, we in this paper determine the unique extremal graph $\mathcal{B}_n^{a,b}$ with the maximum spectral radius among 1-binding graphs of order $n$ with $\delta(G)\geq a$ that contain no $[a,b]$-factor.

The {\it toughness} of a non-complete graph $G$ is $t(G)=\min\{\frac{|X|}{c(G-X)}\mid X\subseteq V(G)\, \text{and}\, c(G-X)\geq 2\},$ where $c(G-X)$ denotes the number of connected components of $G-X$. A graph $G$ is {\it 1-tough} if $c(G-X)\leq |X|$ for every $X\subseteq V(G)$ with $c(G-X)\geq 2$. Every 1-tough graph of order $n\geq2$ is 1-binding. Note that $t(\mathcal{B}_n^{a,b})<1$. This leads us to investigate the corresponding spectral extremal problem for 1-tough graphs. Note that every 1-tough graph with $\delta(G)\geq 1$ contains a $[1,b]$-factor for $b>1$. For $2\leq a<b$, we in this paper characterize the unique extremal graph with the maximum spectral radius among 1-tough graphs of order $n$ with $\delta(G)\geq a$ containing no $[a,b]$-factor.

\begin{flushleft}
\textbf{Keywords:} Spectral radius, Binding number, Toughness, $[a,b]$-factor, Minimum degree
\end{flushleft}
\textbf{AMS Classification:} 05C50; 05C35

\section{Introduction}
All graphs considered in this paper are finite, simple, and undirected. Let $G$ be a graph with vertex set $V(G)$ and edge set $E(G)$. The order and size of $G$ are denoted by $|V(G)|=n$ and $|E(G)| = e(G)$. For any two vertex-disjoint graphs $G_1$ and $G_2$, let $G_1\cup G_2$ be the disjoint union of $G_1$ and $G_2$. The join $G_1\vee G_2$ is the graph obtained from $G_1\cup G_2$ by adding all possible edges between $V(G_1)$ and $V(G_2)$. For any vertex $v \in V(G)$, let $d_{G}(v)$ and $N_{G}(v)$ be the {\it degree} and neighborhood of $v$ in $G$, respectively. Let $\delta(G)=\min_{v \in V(G)} d_G(v)$. We write $u \sim v$ if $u$ is adjacent to $v$. For any subset $S$ of $V(G)$, denote by $G[S]$ the subgraph of $G$ induced by $S$. Furthermore, let $G-S$ be the induced subgraph $G[V(G)-S]$ for any subset $S\subseteq V(G)$. For two vertex disjoint subsets $S, T \subseteq V(G)$, let $|[S,T]|_G$ denote the number of edges between $S$ and $T$. For any set $S$, let $|S|$ denote the number of elements in $S$. For any $v\in V(G)$ and $S\subseteq V(G)$, we write $N_S(v)=N_G(v)\cap S$ and $d_S(v)=|N_S(v)|$. Define $N_G(X)=\cup_{v\in X}N_G(v)$ and $N_S(X)=N_G(X)\cap S$ for any $X\subseteq V(G)$.

Let $A(G)$ be the adjacency matrix of $G$, and let $\rho(G)=\lambda_{1}(G)\geq\lambda_{2}(G) \geq \cdots \geq \lambda_{n}(G)$ be its eigenvalues. By the Perron-Frobenius theorem, every connected graph $G$ has a positive eigenvector corresponding to $\rho(G)$, which is called the Perron vector of $A(G)$.

Woodall \cite{Woodall} first introduced the {\it binding number} of a graph $G$, defined by $$b(G)=\min\{\frac{|N_G(X)|}{|X|}:\emptyset\neq X\subseteq V(G), N_G(X)\neq V(G)\}.$$ A graph $G$ is called {\it $1$-binding} if $b(G)\geq 1$. An {\it $[a,b]$-factor} of $G$ is a spanning subgraph $H$ such that for each vertex $v \in V(G)$, $a \leq d_H(v) \leq b$, where $a$ and $b$ are two positive integers with $a \leq b$. If $a=b=k$, then an $[a,b]$-factor is a $k$-factor. Note that $\delta(G)\geq k$ is a necessary condition for the existence of a $k$-factor. Fan and Lin \cite{Fan3} posed the following natural problem.

\begin{prob}\label{p1.1}
Which $1$-binding graphs with $\delta(G)\geq k$ have a $k$-factor?
\end{prob}

Fan and Lin \cite{Fan3} answered Problem \ref{p1.1} for $k=1,2$. Subsequently, Tang and Zhang \cite{Tang} characterized the unique extremal graph with maximum spectral radius $G_{n,k}$ among graphs with $\delta(G)\geq k$ that contain no $k$-factor. Note that $G_{n,k}$ is 1-binding. Then the result of Tang and Zhang \cite{Tang} resolved Problem \ref{p1.1} for $k\geq 3$. Observe that a $k$-factor is a special case of an $[a,b]$-factor. Spectral conditions for the existence of $[a,b]$-factors were investigated by Fan et al. \cite{Fan5} for order $n\geq 3a+b$. Hao and Li \cite{Hao} determined the maximum spectral radius among graphs of order $n$ containing no $[a,b]$-factor, which strengthened the main result of Wei and Zhang \cite{Wei}. Observe that the binding number of the extremal graph $b(H_n^a)=\frac{n-1}{n-a+1}\geq 1$ for $a\geq 2$ in Hao and Li \cite{Hao}. Therefore $H_n^a$ is still the unique extremal graph among 1-binding graphs that contain no $[a,b]$-factor.  Note that $\delta(G)\geq a$ is a necessary condition for the existence of an $[a,b]$-factor in graphs. Hao and Li \cite{Hao} further proposed the following interesting problem.

\begin{prob}\label{p1.2}
Determine sharp lower bounds on the size or spectral radius of an n-vertex graph $G$ with $\delta(G)\geq a$ such that $G$ contains an $[a,b]$-factor.
\end{prob}

The spectral version of Problem \ref{p1.2} was solved by O \cite{O} for $a=b=1$ and by Tang and Zhang \cite{Tang} for $a=b=k\geq 2$. For $1\leq a<b$, Fan et al. \cite{Fan2} answered the spectral version of Problem \ref{p1.2} for connected graphs and obtained the following Theorem \ref{theorem1.1}.

Let $a$, $b$, and $n$ be three positive integers such that $1\leq a<b$ and $n\geq 2(a+b+2)(b+2)$. Suppose that $H_n^{a,b}$ is a graph obtained from $K_a\vee(K_{n-a-b-1}\cup(b+1)K_1)$ by adding $a-1$ edges between one vertex in $(b+1)K_1$ and $a-1$ vertices in $K_{n-a-b-1}$.

\begin{thm}[Fan et al. \cite{Fan2}]\label{theorem1.1}
Let $a$ and $b$ be two positive integers with $a<b$. If $G$ is a connected graph of order $n\geq 2(a+b+2)(b+2)$ with $\delta(G)\geq a$ which contains no $[a,b]$-factor, then $$\rho(G)\leq \rho(H_n^{a,b}),$$ with equality if and only if $G\cong H_n^{a,b}$.
\end{thm}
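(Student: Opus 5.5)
We would prove Theorem \ref{theorem1.1} with Lovász's $(g,f)$-factor theorem in the $[a,b]$ form: $G$ has an $[a,b]$-factor iff for all disjoint $S,T\subseteq V(G)$,
$$\delta(S,T)=b|S|+\sum_{v\in T}d_{G-S}(v)-a|T|\ge 0 .$$
(For $a<b$ the odd-component term is absent.) Let $G$ be an extremal graph: connected, of order $n$, with $\delta(G)\ge a$, no $[a,b]$-factor, and maximum $\rho(G)$. Choose disjoint $S,T$ with $\delta(S,T)\le -1$, taking $|T|$ minimal. Then no vertex of $T$ has more than $a$ neighbours in $T$ outside $S$, so $d_{G-S}(v)\le a$ for $v\in T$; otherwise deleting $v$ from $T$ keeps $\delta<0$. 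Set $s=|S|$ and $t=|T|$. The deficiency gives
$$\sum_{v\in T}d_{G-S}(v)\le at-bs-1, \qquad\text{so } t\ge bs+1 .$$

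Next we use maximality. Adding edges inside $S$, between $S$ and everything, and inside $R=V\setminus(S\cup T)$ leaves $\delta(S,T)$ unchanged. Such additions also keep the graph connected and $\delta\ge a$, and they raise $\rho$. Hence $G=K_s\vee (G[R\cup T])$ with $R$ a clique fully joined to $N_R(T)$. The lower bound $\rho(G)\ge\rho(H_n^{a,b})>n-b-2$ comes from $K_{n-b-1}\subseteq H_n^{a,b}$. We compare this with upper bounds of the form $\rho\le$ (largest root of a quotient-matrix equation), or with Hong's bound $\rho\le\sqrt{2e-n+1}$.

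First we rule out $s\ge a+1$, and $s\le a-1$ with large $t$. If $t\ge b+2$ and $s$ is not tiny, then $T$ induces a sparse part. Its vertices have degree at most $s+a$ in total, and $e(G)\le \binom{n-t}{2}+t(s+a)$, so $\rho<n-b-2$ once $n\ge 2(a+b+2)(b+2)$. This leaves $s\le a$ and $t=bs+1$ or close to it; when $s<a$ the minimum degree condition $\delta\ge a$ forces vertices of $T$ to use neighbours in $G-S$. A counting argument then shows $s=a$ and $t=b+1\cdot$ wait $t\ge ab+1$ is incompatible; we instead take $s=a$ with $T$ of size $b+1$ losing one unit, so that $\sum_T d_{G-S}=a-1$.

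The main obstacle is this final configuration analysis. We must show that with $\sum_{v\in T}d_{G-S}(v)=at-bs-1$ and $\delta\ge a$, the spectrally optimal placement concentrates the $a-1$ extra edges at one vertex of $T$ going into the clique $R$. We expect the competing graphs to be small-$s$ ones, e.g. $s=1$ with the $T$-vertices attached to $R$. We would compare them with $H_n^{a,b}$ via Perron-vector arguments: moving edges toward a vertex of larger Perron entry (Kelner/Wu-type edge rotation) increases $\rho$. For the structures that cannot be rotated into each other, we compare characteristic polynomials of equitable quotient matrices, evaluated at $n-b-2$ and the root of the $H_n^{a,b}$ polynomial. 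That evaluation is where the bound $n\ge 2(a+b+2)(b+2)$ enters. Equality throughout yields $G\cong H_n^{a,b}$.
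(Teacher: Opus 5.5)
The paper does not prove this theorem; it is quoted from Fan et al. Its proof of the 1-binding analogue, Theorem~\ref{theorem1.4}, follows essentially the skeleton you sketch. Measured against that, your proposal has a genuine gap at the decisive step.

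First, a concrete error. $\sum_{v\in T}d_{G-S}(v)\ge 0$ gives $at\ge bs+1$, not $t\ge bs+1$. The latter already fails for $H_n^{a,b}$ itself ($s=a$, $t=b+1$), which is why your text collapses at ``wait''. The useful consequence comes from $\delta(G)\ge a$, i.e.\ $d_{G-S}(v)\ge a-s$ on $T$, which yields $s(t-b)\ge 1$. The correct reductions are then:
\begin{enumerate}
\item $e(G)\le\binom{n-t}{2}+at+s(t-b)-1$, together with $\rho^2\le 2e(G)$ and $\rho>n-b-2$, forces $t=b+1$ (as in Lemma~\ref{lemma3.2}; the computation goes through for $n\ge 2(a+b+2)(b+2)$);
\item then $s\le a+\frac{a-1}{b}$, so $s\le a$;
\item adding a $T$--$R$ edge shows $\sum_{v\in T}d_{G-S}(v)=a(b+1)-bs-1$.
\end{enumerate}
None of this gives $s=a$. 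For every $1\le s\le a$, take $K_s\vee(K_{n-s-b-1}\cup(b+1)K_1)$ with one vertex of $T$ joined to $a-1$ vertices of $R$ and the others to $a-s$ each. This graph is connected, has $\delta\ge a$, and has no $[a,b]$-factor. So $s=a$ is a spectral conclusion, not a counting one. Also, small $s$ is the weakest competitor, not the strongest as you guess.

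What remains is the real content of the theorem: fixing $s$, excluding edges inside $T$ (which you never discuss), and forcing the pattern $(a-1,0,\dots,0)$ of $d_{G-S}$ on $T$. Your rotation toolkit covers only part of this.

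Rotations do show that in an extremal graph each $N_R(u)$, $u\in T$, is a strict upper set in the Perron order, so these sets are nested. With $T$ independent, their common part has at least $a-s$ vertices, all universal. So for $s<a$ you get a set $S'$ of size $a$ with slack in the deficiency, and can add an edge, a contradiction.

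But rotations stall at $s=a$. Take $s=a=3$ and let $u_1,u_2\in T$ be joined to one common $w\in R$ and to nothing else in $R$. This configuration is rotation-stable. Its Perron vector $z$ has $z_{u_1}=z_{u_2}$ and $z_w>z_{w'}$ for $w'\in R\setminus\{w\}$. So replacing $u_2w$ by $u_1w'$, which produces $H_n^{3,b}$, \emph{lowers} the Rayleigh quotient at $z$, yet $\rho(H_n^{3,b})$ is larger. A second-order comparison is therefore unavoidable.

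In the paper this role is played by Lemma~\ref{lemma2.7}: if $|T|=b+1$, $G-T$ is a clique and $d_{G-T}(x)\le 2a-1$, then $\rho$ is strictly increasing in $P(G)=\sum_{x\in T}d_{G-T}(x)^2$. Lemma~\ref{lemma3.4}(i) does not use the binding hypothesis and covers $s\le a-1$. For $s=a$ one uses the analogous convexity bound with $0\le d_{G-S}(x)\le a-1$ and $\sum_{x\in T}d_{G-S}(x)=a-1$. Together they give $P(G)\le(2a-1)^2+ba^2=P(H_n^{a,b})$. Equality holds only for $s=a$, $E(G[T])=\emptyset$ and the pattern $(a-1,0,\dots,0)$, which is $H_n^{a,b}$ because $G-T$ is a clique.

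However, Lemma~\ref{lemma2.7} needs $n$ of order $(b+1)^2(2a-1)^2$. To reach $n\ge 2(a+b+2)(b+2)$ you would have to actually carry out the sharper quotient-matrix estimates that your proposal only names.
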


Note that $b(H_n^{a,b})\leq \frac{a}{b}<1$ for $1\leq a<b$. This motivates the corresponding spectral extremal problem for 1-binding graphs. For $1=a<b$, Amahashi and Kano \cite{Amahashi} proved the following theorem.

\begin{thm}[Amahashi and Kano \cite{Amahashi}]\label{theorem1.2}
Let $b\geq 2$ be an integer. A graph $G$ contains a $\{K_{1,1},K_{1,2},\dots, K_{1,b}\}$-factor if and only if $i(G-S)\leq b|S|$ for every $S\subseteq V(G)$, where $i(G-S)$ denotes the number of isolated vertices in $G-S$.
\end{thm}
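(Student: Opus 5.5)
This is the classical Amahashi--Kano theorem, and I would prove it through the standard characterization of $[1,b]$-factors. The key observation is that a spanning forest of stars $K_{1,j}$ with $1\le j\le b$ is essentially the same object as a $[1,b]$-factor. Given such a star factor, every vertex has degree between $1$ and $b$, so it is a $[1,b]$-factor. Conversely, take a $[1,b]$-factor $F$ with the minimum number of edges. If $F$ contained an edge $uv$ with $d_F(u),d_F(v)\ge 2$, then deleting it would give a smaller $[1,b]$-factor. Hence every edge has an endpoint of $F$-degree $1$. This rules out paths of length $3$ and cycles, so each component of $F$ is a star $K_{1,j}$ with $j\le b$. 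Thus it suffices to prove that $G$ has a $[1,b]$-factor if and only if $i(G-S)\le b|S|$ for all $S$.

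Necessity is a counting argument. Let $I$ be the set of isolated vertices of $G-S$. In a star factor, each vertex of $I$ must be matched by its star edge to a vertex of $S$. Each vertex of $S$ lies in exactly one star and therefore absorbs at most $b$ vertices of $I$. Hence $|I|\le b|S|$.

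For sufficiency I would use a Hall-type (bipartite $b$-matching) argument rather than Lovász's general $(g,f)$-factor theorem. The steps are as follows.
\begin{enumerate}
\item Choose a spanning subgraph $F$ of $G$ whose components are stars with at most $b$ leaves (with $K_2$ counted as a star) that covers as many vertices as possible. Let $U$ be the set of uncovered vertices. Suppose $U\neq\emptyset$ and aim for a contradiction.
\item No two vertices of $U$ are adjacent, since otherwise an extra $K_2$ could be added. Hence $U$ is independent.
\item Run an alternating-path exploration from $U$. Define $S$ to be the set of star centers reachable by alternating moves. The moves are: go from an uncovered vertex, or from a reachable leaf, to an adjacent center; then from a center to its leaves.
\item Maximality forces the following. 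A reachable center has exactly $b$ leaves. A reachable vertex is never adjacent to a leaf of a $K_2$ or to a non-center vertex that could be re-hung. Every reachable leaf and every vertex of $U$ has all its neighbours in $S$.
\end{enumerate}
Then $U$ together with the reachable leaves are isolated in $G-S$, and their number is at least $b|S|+|U|>b|S|$. This contradicts the hypothesis.

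The main obstacle is step 4: the case analysis of the augmenting operations. One must handle re-centering a $K_2$ (either endpoint may serve as center), moving a leaf of a $K_2$ to another star, and splitting a star. Each such operation must increase coverage while keeping all star sizes at most $b$, and this is what justifies the conclusion that reachable vertices have neighbours only in $S$. If this gets messy, I would fall back on a different route: reduce to a bipartite $b$-matching in the auxiliary graph on $V\times V$, where the existence condition is $i(G-S)\le b|S|$. This works because a $[1,b]$-factor in the $\{0,1\}$-degree-lower-bound sense is exactly Hall's condition for saturating the vertices of degree-one type.
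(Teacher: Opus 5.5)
The paper does not prove this statement. It quotes it from Amahashi and Kano and uses only the sufficiency direction, in the proof of Theorem~\ref{theorem1.3}. So your proposal has to stand on its own, and its outline does. The minimal-factor reduction to $[1,b]$-factors is correct. The necessity count is correct: send each vertex of $I$ along one factor edge into $S$, and note that each vertex of $S$ has factor degree at most $b$. A maximum star packing with an alternating exploration is a valid elementary route to sufficiency.

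Notice, however, that your first paragraph already closes the argument with a tool the paper quotes. In Lemma~\ref{lemma2.2} with $a=1<b$, the set $T$ is exactly the set of isolated vertices of $G-S$, and $\sum_{v\in T}d_{G-S}(v)=0$. So the lemma says precisely that $G$ has a $[1,b]$-factor if and only if $i(G-S)\le b|S|$ for every $S$, and your reduction turns such a factor into a star factor. That route avoids all exchange arguments but relies on the Heinrich et al.\ criterion. Yours is self-contained, but it must actually carry out step 4.

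In step 4, the local exchanges at an uncovered vertex $u$ with a neighbor $w$ are as follows:
\begin{enumerate}
\item If $w$ is a center with fewer than $b$ leaves, absorb $u$ into its star.
\item If $w$ lies in a $K_2$, turn that $K_2$ into a $K_{1,2}$ centered at $w$. This is where $b\ge 2$ enters.
\item If $w$ is a leaf of a star with at least two leaves, detach $w$ and add $uw$ as a new $K_2$.
\end{enumerate}
Hence every neighbor of $u$ is the center of a star with exactly $b\ge 2$ leaves, which also makes ``center'' unambiguous.

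What your list omits is the step that carries the proof: the transfer to reachable leaves. For a reachable leaf $\ell$, take a shortest alternating path $u=\ell_0,c_1,\ell_1,\dots,c_k,\ell_k=\ell$; its centers are then distinct. Let each $c_i$ drop $\ell_i$ and take $\ell_{i-1}$. This gives another maximum packing in which $\ell$ is uncovered and the stars at $c_1,\dots,c_k$ still have $b$ leaves. The local exchanges therefore apply to $\ell$, and they force every neighbor of $\ell$ to be a full center of the original packing. Only this gives $N_G(I)\subseteq S$ for $I$ consisting of $U$ and the reachable leaves.

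Your fallback, as written, is not a proof. In the bipartite version, each vertex picks a neighbor $f(v)$ and each vertex is picked at most $b$ times. Its Hall condition is indeed equivalent to $i(G-S)\le b|S|$; split $X$ according to whether a vertex has a neighbor in $X$. But the edge set $\{vf(v)\}$ may have vertices of degree $b+1$. A further decomposition of this functional graph into stars with at most $b$ leaves would still be needed.
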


Based on Theorem \ref{theorem1.2}, we prove the relation between the binding number and $[1,b]$-factors of graphs as follows.

\begin{thm}\label{theorem1.3}
Let $b\geq 2$ be an integer. If $b(G)\geq \frac{1}{b}$, then $G$ contains a $[1,b]$-factor.
\end{thm}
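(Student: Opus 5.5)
We deduce Theorem \ref{theorem1.3} from Theorem \ref{theorem1.2}. A $\{K_{1,1},\dots,K_{1,b}\}$-factor is a spanning subgraph each of whose components is a star $K_{1,j}$ with $1\le j\le b$. In such a subgraph every vertex has degree between $1$ and $b$. Hence it is a $[1,b]$-factor. It therefore suffices to verify that $i(G-S)\le b|S|$ for every $S\subseteq V(G)$.

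First we handle small cases and isolated vertices. If $G$ had an isolated vertex $v$, take $X=\{v\}$. Then $N_G(X)=\emptyset\neq V(G)$, so $b(G)=0<\frac1b$, a contradiction. Thus $\delta(G)\ge 1$. For $S=\emptyset$ this gives $i(G)=0\le 0$, as required.

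Now fix a nonempty $S$ and let $I$ be the set of isolated vertices of $G-S$. We may assume $I\neq\emptyset$. Every vertex of $I$ has all of its neighbours in $S$, so $N_G(I)\subseteq S$. We need $N_G(I)\ne V(G)$ in order to apply the definition of $b(G)$. Since $G[I]$ has no edges, no vertex of $I$ is a neighbour of another vertex of $I$. A vertex $u\in I$ could lie in $N_G(I)$ only through a neighbour in $I$, which is impossible. Hence $I\cap N_G(I)=\emptyset$, and in particular $N_G(I)\neq V(G)$.

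The definition of the binding number then gives
\[
\frac1b\le b(G)\le \frac{|N_G(I)|}{|I|}\le \frac{|S|}{|I|},
\]
that is, $i(G-S)=|I|\le b|S|$. Theorem \ref{theorem1.2} now yields a star factor with all stars of size at most $b$, and hence a $[1,b]$-factor.

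The only delicate point is checking that the set $X=I$ is admissible, namely that $N_G(I)\ne V(G)$. The observation $I\cap N_G(I)=\emptyset$ settles this, and no further obstacle is expected.
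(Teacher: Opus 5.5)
Your proof is correct and follows the paper's own argument: you apply the definition of the binding number to the set $I$ of isolated vertices of $G-S$, use $N_G(I)\subseteq S$ to get $|I|\le b|S|$, and then invoke Theorem~\ref{theorem1.2}. The differences are only cosmetic. You treat $S=\emptyset$ separately, which is unnecessary since the general argument already covers it with $N_G(I)=\emptyset$. You also justify $N_G(I)\neq V(G)$ via $I\cap N_G(I)=\emptyset$ rather than the paper's observation that $N_G(I)\subseteq S$ and $I\cap S=\emptyset$.
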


By Theorem \ref{theorem1.3}, every 1-binding graph $G$ with $\delta(G)\geq 1$ has a $[1,b]$-factor for $1=a<b$. This leads naturally to the following interesting problem for $2\leq a<b$.
\begin{prob}\label{p1.3}
Let $a$ and $b$ be two integers with $2\leq a<b$. What is the maximum spectral radius among $n$-vertex 1-binding graphs with $\delta(G)\geq a$ that contain no $[a,b]$-factor?
\end{prob}

Focusing on Problem \ref{p1.3}, we determine the maximum spectral radius among $n$-vertex 1-binding graphs with $\delta(G)\geq a$ that contain no $[a,b]$-factor and characterize the unique extremal graph with maximum spectral radius. To state our result, we first introduce the following graphs.

Let $a$, $b$, and $n$ be three positive integers such that $2\leq a<b$ and $n\geq (b+1)^2(2a-1)^2+2b+3$. Define vertex subsets $S_1=\{v_1, v_2, \dots, v_a\}$, $S_2=\{v_1, v_2, \dots, v_{a-1}\}$, $R_1=\{w_1, w_2, \dots, w_{n-a-b-1}\}$, $R_2=\{w_1, w_2, \dots, w_{n-a-b}\}$, $C_1=S_1\cup R_1$, $C_2=S_2\cup R_2$ and $T=\{u_1, u_2, \dots, u_{b+1}\}$. In the constructions below, an index range is understood to be empty when its lower bound exceeds its upper bound. In the figures, labels outside the ranges should be ignored, and repeated labels refer to the same vertex. Assume $2\leq a<b\leq 2a-2$. Let the graph $B_{n,1}^{a,b}$ (see Fig. \ref{fig:X}) have a vertex partition $V(B_{n,1}^{a,b})=C_1\cup T$, where $B_{n,1}^{a,b}[C_1]=K_{n-b-1}$ and $B_{n,1}^{a,b}[T]=(b+1)K_1$. Every vertex of $S_1$ is adjacent to every vertex of $T$. Define the remaining edges between $R_1$ and $T$ by
\[N_{R_1}(u_{i+1})=
\begin{cases}
\{w_1, w_2, \dots, w_{2a-b-1}\},&i=0,\\
\{w_i\},&1\le i\le\min\{b-a,2a-b-2\},\\
\{w_{i+1}\},&2a-b-1\le i\le b-a,\\
\emptyset,&b-a+1\le i\le b.
\end{cases}
\]

For $2\leq a<b$, let the graph $B_{n,2}^{a,b}$ (see Fig. \ref{fig:Y}) have a vertex partition $V(B_{n,2}^{a,b})=C_2\cup T$, where $B_{n,2}^{a,b}[C_2]=K_{n-b-1}$ and $B_{n,2}^{a,b}[T]=(b+1)K_1$. Every vertex of $S_2$ is adjacent to every vertex of $T$. If $a=2$, define the remaining edges between $R_2$ and $T$ by $N_{R_2}(u_i)=\{w_1\}$ for $i\in \{1,2\}$ and $N_{R_2}(u_{i+1})=\{w_i\}$ for $i\in\{2,3,\dots, b\}$. If $a\geq 3$, define the remaining edges between $R_2$ and $T$ by
\[N_{R_2}(u_i)=
\begin{cases}
\{w_1, w_2, \dots, w_{a-1}\},&i=1,\\
\{w_1\}, &i\in\{2,3,\dots,a+1\},\\
\{w_{i-a}\}, &a+2\leq i\leq \min\{b+1, 2a-2\},\\
\{w_{i-a+1}\}, &2a-1\leq i\leq b+1.
\end{cases}
\]
\begin{figure}[H]
    \centering
    \begin{subfigure}[b]{0.42\textwidth}
        \centering
        \includegraphics[width=\linewidth]{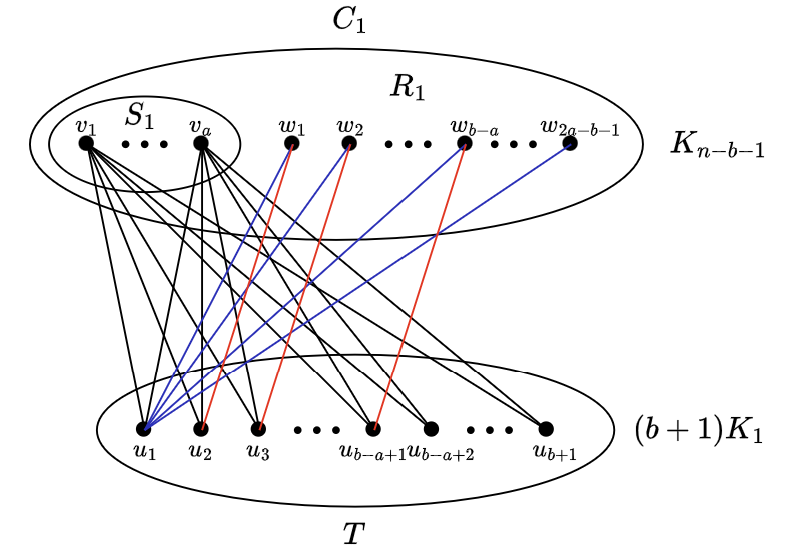}
        \caption{Graph $B_{n,1}^{a,b}$ when
        $b\leq \frac{3}{2}a-1$.}
        \label{fig:X1}
    \end{subfigure}
    \hfill
    \begin{subfigure}[b]{0.45\textwidth}
        \centering
        \includegraphics[width=\linewidth]{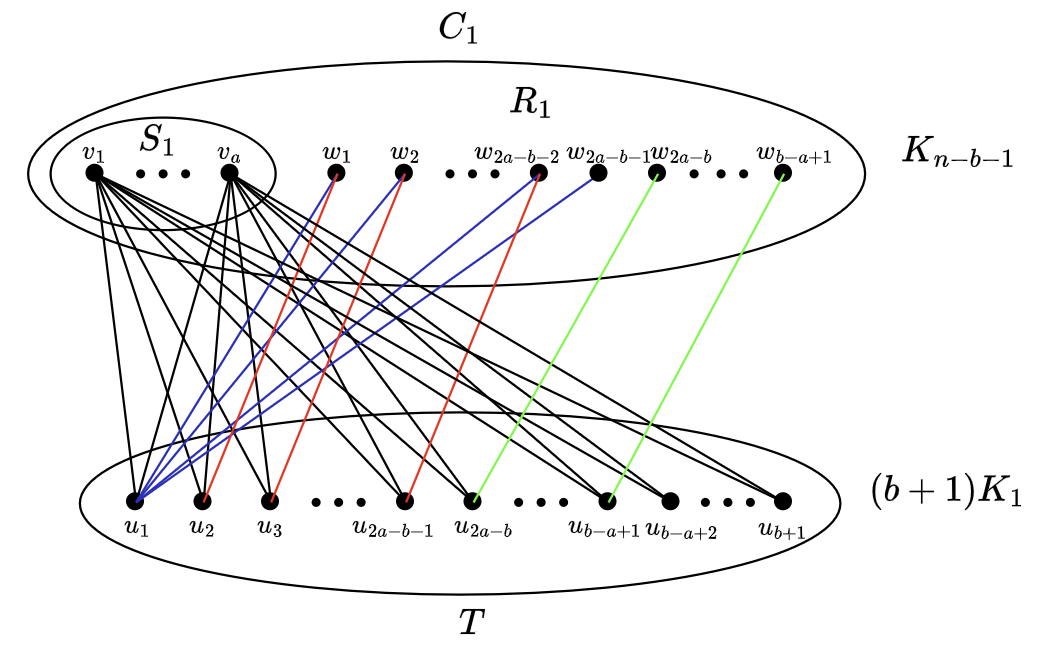}
        \caption{Graph $B_{n,1}^{a,b}$ when
        $\frac{3}{2}a-1<b\leq 2a-2$.}
        \label{fig:X2}
    \end{subfigure}
    \caption{Graph $B_{n,1}^{a,b}$.}
    \label{fig:X}
\end{figure}
\begin{figure}[htbp]
    \centering
    \begin{subfigure}[b]{0.4\textwidth}
        \centering
        \includegraphics[width=\linewidth]{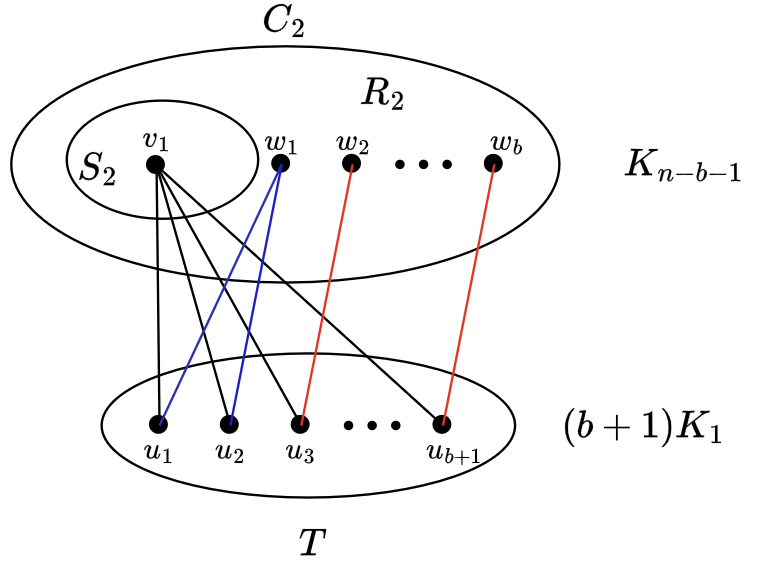}
        \caption{Graph $B_{n,2}^{a,b}$ when
        $a=2$.}
        \label{fig:Y1}
    \end{subfigure}
    \hfill
    \begin{subfigure}[b]{0.42\textwidth}
        \centering
        \includegraphics[width=\linewidth]{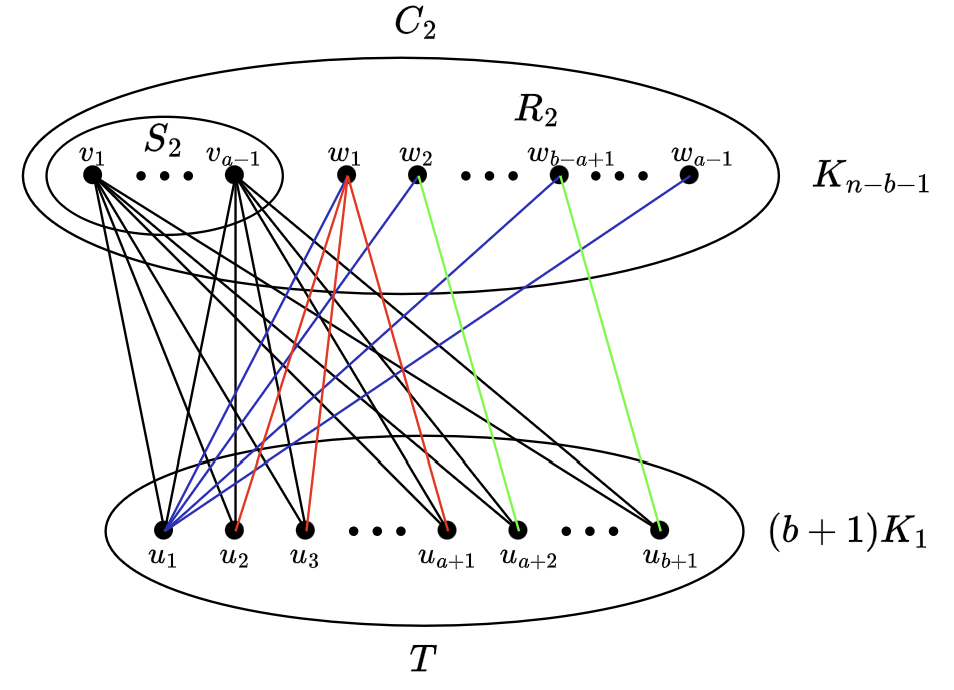}
        \caption{Graph $B_{n,2}^{a,b}$ when
        $a\geq 3$ and $b\leq 2a-3$.}
        \label{fig:Y2}
    \end{subfigure}
       \hfill
    \begin{subfigure}[b]{0.43\textwidth}
        \centering
        \includegraphics[width=\linewidth]{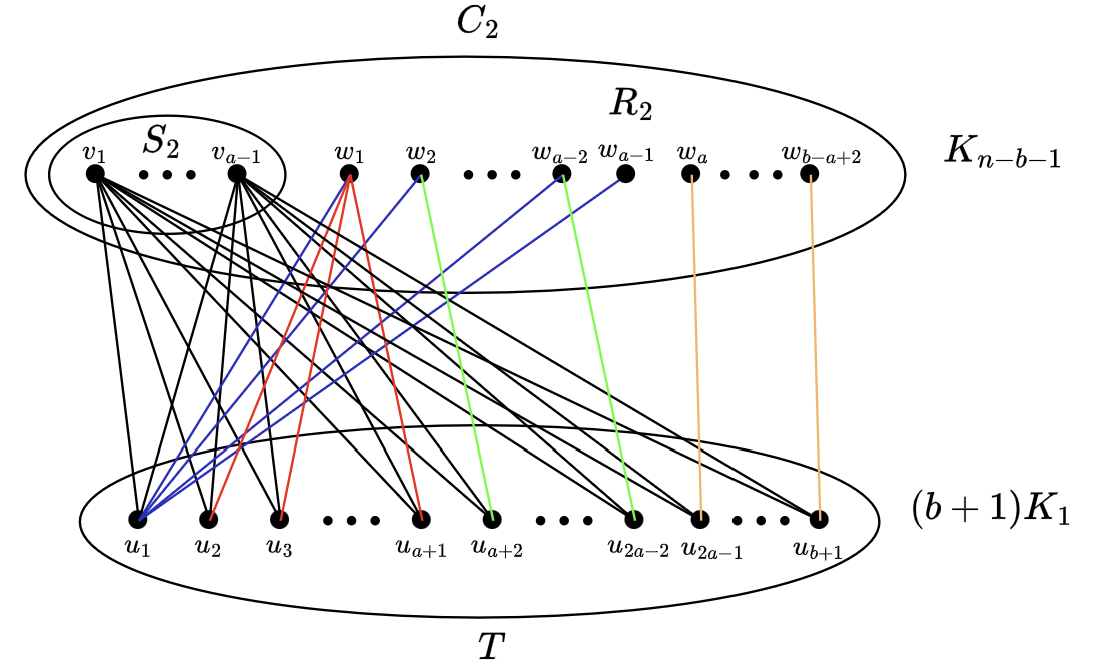}
        \caption{Graph $B_{n,2}^{a,b}$ when
        $a\geq 3$ and $b\geq 2a-2$.}
        \label{fig:Y3}
    \end{subfigure}
    \caption{Graph $B_{n,2}^{a,b}$.}
    \label{fig:Y}
\end{figure}

Among these graphs, we select $\mathcal B_n^{a,b}$ as follows and show in Theorem \ref{theorem1.4} that it is the unique spectral extremal graph for Problem \ref{p1.3}.
\[
\mathcal{B}_n^{a,b}=
\begin{cases}
B_{n,1}^{a,b}, &b\le 2a-2 \ \text{and}\ 3a^2-4ab+b^2+a+3b-3>0,\\
B_{n,2}^{a,b}, &\text{otherwise}.
\end{cases}
\]

\begin{thm}\label{theorem1.4}
Let $a$, $b$, and $n$ be three positive integers such that $2\leq a<b$ and $n\geq (b+1)^2(2a-1)^2+2b+3$. If $G$ is an $n$-vertex 1-binding graph with $\delta(G)\geq a$ which contains no $[a,b]$-factor, then $$\rho(G)\leq \rho(\mathcal{B}_n^{a,b}),$$ with equality if and only if $G\cong \mathcal{B}_n^{a,b}$.
\end{thm}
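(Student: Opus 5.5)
Let $G$ be a graph attaining the maximum spectral radius among $n$-vertex 1-binding graphs with $\delta(G)\geq a$ and no $[a,b]$-factor. The plan is to show $G\cong\mathcal{B}_n^{a,b}$ by combining Lovász's $(g,f)$-factor theorem with edge-shifting arguments on the Perron vector.

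\textbf{Step 1: a barrier.} Apply Lovász's criterion with $g\equiv a$ and $f\equiv b$. Since $G$ has no $[a,b]$-factor, there are disjoint sets $S,T\subseteq V(G)$ with
\[
b|S|-a|T|+\sum_{v\in T}d_{G-S}(v)<0 ,
\]
and since $a<b$ we may take the odd-component term to be absent. Set $s=|S|$ and $t=|T|$.

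\textbf{Step 2: fixing the shape of $G$.} First I will use lower bounds such as $\rho(\mathcal{B}_n^{a,b})>\rho(K_{n-b-1})=n-b-2$ to bound the parameters. Combined with the standard estimate $\rho\le\sqrt{2e(G)-(n-1)\delta+\dots}$ or Hong's bound, this forces $|T|$ to be small relative to $n$. The barrier then gives $t\ge bs/a+1$ roughly and few edges from $T$ into $V\setminus(S\cup T)$. Since $\rho$ is close to $n$, all but $O(1)$ vertices lie in $R=V\setminus(S\cup T)$, and $G[S\cup R]$ must be complete. Otherwise we add an edge inside $S\cup R$; this keeps $\delta\ge a$, keeps the graph 1-binding (adding edges cannot decrease $|N(X)|$), and keeps the same barrier, so it contradicts maximality. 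For the same reason every $S$–$T$ edge is present. Making $T$ independent is harder: adding edges inside $T$ changes the barrier sum. I will take $(S,T)$ to minimize $\sum_{v\in T}d_{G-S}(v)$ and then argue that the barrier inequality is tight, giving $b s-a t+\sum_{T}d_{G-S}=-1$ or $-2$. Using $\delta\ge a$ on $T$, i.e.\ $d_{G-S}(v)\ge a-s$, together with the barrier, we get $t(b\cdot\text{stuff})$ and $s\le a$. Comparing spectral radii across values of $s$ (the $K_{n-t}$ plus $s$-dominating structure) should leave only $t=b+1$ with $s\in\{a-1,a\}$. This matches the two families $B_{n,1}^{a,b}$ ($s=a$) and $B_{n,2}^{a,b}$ ($s=a-1$).

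\textbf{Step 3: the role of 1-binding.} The 1-binding condition matters for $X\subseteq T$. Since $T$ is independent, we need $|N(X)|\ge|X|$, so the $R$-neighbourhoods of vertices in $T$ (beyond the common $S$) must satisfy a Hall-type condition of size $|X|-s$. With $s=a$ and $t=b+1$, the barrier allows $\sum_T d_R(v)\le a(b+1)-ab-1=a-1$ extra edges, yet Hall needs $|N_R(X)|\ge |X|-a$ for all $X$, i.e.\ $b+1-a$ distinct $R$-vertices. I will characterize the edge-minimal configurations meeting these constraints with $\delta\ge a$ and the given budget. Among them, the one maximizing $\rho$ concentrates the edges on few $R$-vertices and on one $T$-vertex, via Kelmans-type shifting using Perron entries $x_{w}$ for $w\in R$, which are nearly equal. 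This yields exactly the neighbourhood patterns in the definitions of $B_{n,1}^{a,b}$ and $B_{n,2}^{a,b}$. The constraint $b\le2a-2$ for $B_{n,1}$ comes from feasibility of the budget.

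\textbf{Step 4: the final comparison (main obstacle).} It remains to compare $\rho(B_{n,1}^{a,b})$ with $\rho(B_{n,2}^{a,b})$, and the shifting arguments within each family. Both have spectral radius $n-b-2+\varepsilon$ with $\varepsilon=O(1/n)$, so first-order equations are needed. I will write equitable-partition or eigen-equations for the Perron vector, expand $\rho=n-b-2+c_1/n+c_2/n^2+\cdots$, and compare coefficients. I expect the sign of $3a^2-4ab+b^2+a+3b-3$ to arise as the difference of second-order terms. The lower bound on $n$ is what guarantees that the higher-order error terms are dominated. Getting this asymptotic comparison exactly right, together with the tie-breaking in Step 3 when the Perron entries are equal to first order, is the hardest part.
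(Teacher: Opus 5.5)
\textbf{There is a genuine gap.} Your skeleton matches the paper's: a barrier from the factor theorem, completing $G[V\setminus T]$ and the $S$--$T$ edges, $t=b+1$ from $\rho^2\le 2e(G)$ against $\rho>n-b-2$, a tight barrier, $s\in\{a-1,a\}$, Hall-type use of 1-binding, Kelmans shifting, and a final comparison. What is missing is the tool that drives the paper's argument, its Lemma \ref{lemma2.7}.

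Let $G$ be a clique $C$ of order $c$ plus $b+1$ vertices $T$ with $d_C(w)\le 2a-1$. Normalize the Perron vector by $\sum_{v\in C}x_v=1$. Then $\rho-c+1=\sum_{w\in T}d_C(w)x_w$, and iterating the eigen-equation gives
\[
\frac{P(G)}{\rho(\rho+1)}\le \rho-c+1\le \frac{P(G)}{(\rho+1)(\rho-b-1)},\qquad P(G)=\sum_{w\in T}d_C(w)^2 .
\]
Since $P$ is an integer, $P(G_1)>P(G_2)$ forces $\rho(G_1)>\rho(G_2)$ once $c\ge (b+1)^2(2a-1)^2+b+2$; this is where the bound on $n$ enters. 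So the $1/n$ correction vanishes for every candidate and the $1/n^2$ term is $P(G)$. That is why your guess is right: $3a^2-4ab+b^2+a+3b-3$ equals $P(B_{n,1}^{a,b})-P(B_{n,2}^{a,b})$.

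You plan this expansion only for the final comparison of two graphs. But Steps 2--3 contain claims that have no proof without a uniform comparison over the whole class.

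\begin{enumerate}
\item[(i)] Excluding $s\le a-2$ (your ``comparing spectral radii across values of $s$'') is in the paper the bound $P(G)\le (a+s-1)^2+ba^2<(2a-2)^2+ba^2=P(B_{n,2}^{a,b})$.
\item[(ii)] The independence of $T$, which you leave open, holds because any edge inside $T$ makes $d_{G-T}(w)<s+d_{G-S}(w)$ and so strictly lowers $P$.
\item[(iii)] The degree pattern on $T$ is: one entry $a-1$ and $b$ entries $1$ if $s=a-1$; one entry $2a-b-1$, $b-a$ entries $1$ and $a$ zeros if $s=a$. It comes from maximizing the convex sum $\sum_{w\in T}(s+d_{G-S}(w))^2$ under the fixed total $a(b+1)-bs-1$, the bounds $a-s\le d_{G-S}(w)\le a-1$, and, for $s=a$, the Hall bound that at most $a$ vertices of $T$ have no $R$-neighbour. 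This is then converted to spectral information by the lemma.
\end{enumerate}

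Kelmans shifting, your proposed substitute for (iii), does not obviously do this. Moving an $R$-neighbour from one $T$-vertex to another can violate $|N_R(X)|\ge |X|-s$ when that condition is tight, and you give no argument that a legal shift always exists. The paper uses shifting (Lemmas \ref{lemma2.3}, \ref{lemma2.9}, \ref{lemma2.10}) only after the degree sequence is fixed, to place the edges.

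Also, your ``$-1$ or $-2$'' should be exactly $\sum_{w\in T}d_{G-S}(w)=a(b+1)-bs-1$. Otherwise some $u\in T$ has $d_{G-S}(u)\le a-2$, and adding an edge from $u$ to $R$ keeps the barrier and increases $\rho$.

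To close the gap, prove the uniform two-sided bound above first. Then apply it at (i)--(iii) as well as in the final comparison.
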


In 1973, Chv\'atal \cite{Chvatal} defined the {\it toughness} of a non-complete graph $G$ as $$t(G)=\min\{\frac{|X|}{c(G-X)}\mid X\subset V(G)\, \text{and}\, c(G-X)\geq 2\},$$ where $c(G-X)$ is the number of connected components of $G-X$. If $G\cong K_n$, $t(G)$ is defined as $n-1$. A graph $G$ is {\it $t$-tough} if $t(G)\geq t$. Goddard and Swart \cite{Goddard} proved the following theorem.
\begin{thm}[Goddard and Swart \cite{Goddard}]\label{theorem1.5}
For non-complete graphs $G$ with $b(G)\leq1$, $$t(G)\leq b(G).$$
\end{thm}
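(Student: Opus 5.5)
The plan is to turn a set $X$ that realizes the binding number into a vertex cut $S$ whose deletion leaves many components, so that $|S|/c(G-S)\le |N_G(X)|/|X|=b(G)$. Fix a nonempty $X$ with $Y:=N_G(X)\neq V(G)$ and $|Y|=b(G)|X|$, and put $Z=V(G)\setminus Y\neq\emptyset$.

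\textbf{Step 1: enlarge $X$.} Replace $X$ by $X'=\{v\in V(G): N_G(v)\subseteq Y\}=V(G)\setminus N_G(Z)$. Then $X\subseteq X'$ and $N_G(X')\subseteq Y$. Because $|N_G(X')|/|X'|\ge b(G)$, we get $N_G(X')=Y$, and $X'$ is again a minimizer. So we may assume that $X$ is exactly the set of vertices all of whose neighbours lie in $Y$. Split $X=I\cup W$ with $I=X\setminus Y$ and $W=X\cap Y$.

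\textbf{Step 2: build the cut.} Choose an independent set $J$ of $G[W]$ with vertex cover $M=W\setminus J$, and set $S=(Y\setminus W)\cup M=Y\setminus J$. Every vertex of $I\cup J$ has all its neighbours in $Y$, and its neighbours inside $W$ lie in $M$. Hence each vertex of $I\cup J$ is an isolated vertex of $G-S$, and
\[
c(G-S)\ \ge\ |I|+|J|+\varepsilon ,
\]
where $\varepsilon=1$ if $Z\setminus X\neq\emptyset$ (that part contributes at least one further component) and $\varepsilon=0$ otherwise. The required inequality $|S|/c(G-S)\le |Y|/|X|$ becomes
\[
(|Y|-|J|)\,|X|\ \le\ |Y|\,\bigl(|X|-|W|+|J|+\varepsilon\bigr),
\]
which follows from $|Y|\,|W|\le |J|\,(|X|+|Y|)+\varepsilon|Y|$. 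Since $b(G)\le1$ gives $|Y|\le|X|$, it suffices to have $|J|\ge |W|/2$, or some slack coming from $\varepsilon$. We must also check that $c(G-S)\ge2$; here non-completeness and $Z\neq\emptyset$ are used. If $b(G)=1$ and everything is tight, one can instead fall back on a cut witnessing non-completeness.

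\textbf{Main obstacle.} The hard case is when $G[W]$ is dense, so that no large independent $J$ exists; for example, $W$ may induce a clique. My first attempt is to exploit the minimality of $|Y|/|X|$ once more. If $G[W]$ has a large clique, compare $X$ with $X\setminus W$ or with $X\cup N$-modified sets; the inequality $|N_G(X\setminus W)|/|X\setminus W|\ge |Y|/|X|$ should force $|W|$ to be small relative to $|X|-|Y|$. That bound would supply the missing slack in the displayed inequality. Failing that, I would choose, among all minimizers, one with $|W|$ minimum, and argue that $W=\emptyset$ (or that $G[W]$ has a perfect matching structure giving $|J|\ge|W|/2$). In the case $W=\emptyset$ the argument is immediate: $S=Y$, and $G-S$ has at least $|X|$ isolated vertices.
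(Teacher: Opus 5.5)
The paper does not prove this theorem; it quotes it from Goddard and Swart. Your argument therefore has to stand on its own, and as written it does not. The case $W=X\cap N_G(X)\neq\emptyset$ is never closed: your ``main obstacle'' paragraph lists strategies but carries none of them out, and the Step 2 construction genuinely fails there.

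Take $G=K_4\cup K_2$ with $V(K_2)=\{u,v\}$. Then $b(G)=1$, and $X=V(K_4)\cup\{u\}$ is a minimizer already closed under your Step 1, with $Y=V(K_4)\cup\{v\}$, $W=V(K_4)$, $I=\{u\}$ and $\varepsilon=0$. Every independent $J\subseteq W$ has $|J|\le 1$. Choosing $J=\{a\}$ gives $|S|=4$ and $c(G-S)=2$, a ratio of $2>b(G)$; choosing $J=\emptyset$ leaves only one component. Your displayed condition here reads $20\le 10$. The fallback is not justified either: a cut witnessing non-completeness, such as $V(G)\setminus\{p,q\}$ for a non-edge $pq$, has ratio $(n-2)/2$, which exceeds $1$ once $n\ge 5$.

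The missing observation is that a vertex of $I=X\setminus Y$ has no neighbour in $X$ at all, since otherwise it would lie in $N_G(X)=Y$. Hence $I$ is independent and $N_G(I)\subseteq Y\setminus W$, so no vertex of $W$ ever needs to be deleted.

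If $I=\emptyset$, then $X\subseteq Y$ and $|Y|\le|X|$ force $N_G(X)=X\neq V(G)$. So $G$ is disconnected and $t(G)=0$.

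If $I\neq\emptyset$, then
\[
b(G)\le\frac{|N_G(I)|}{|I|}\le\frac{|Y|-|W|}{|X|-|W|}\le\frac{|Y|}{|X|}=b(G),
\]
where the last inequality is equivalent (for $W\neq\emptyset$) to $|Y|\le |X|$, that is, to $b(G)\le 1$. Thus $I=X\setminus W$ is itself a minimizer. This is your suggestion to ``compare $X$ with $X\setminus W$'' made rigorous, and it puts you in your easy case. With $S=N_G(I)$, every vertex of $I$ is isolated in $G-S$, so $|S|/c(G-S)\le |N_G(I)|/|I|=b(G)$. Equivalently, you can take $S=Y\setminus W$ for the original $X$.

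It remains to check $c(G-S)\ge 2$. This is clear if $|I|\ge 2$. If $I=\{v\}$, then $d_G(v)=b(G)\le 1$, and non-completeness forces a vertex outside $\{v\}\cup N_G(v)$. With this observation, Step 1 and the independent-set construction in Step 2 become unnecessary.
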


Let $G$ be an $n$-vertex graph with $t(G)\geq 1$ and $n\geq 2$. We can deduce that $b(G)\geq 1$. In fact, assume that $b(G)<1$. Since $b(K_n)=n-1\geq 1$, $G$ is non-complete. By Theorem \ref{theorem1.5}, $t(G)\leq b(G)<1,$ a contradiction. Hence we obtain the following corollary.

\begin{corollary}\label{corollary1.1}
Every 1-tough graph on at least two vertices is 1-binding.
\end{corollary}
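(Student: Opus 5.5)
The statement to prove is Corollary \ref{corollary1.1}: every 1-tough graph $G$ on $n\geq 2$ vertices is 1-binding. I will argue by contradiction, using Theorem \ref{theorem1.5} as the only real tool. The plan is to assume $b(G)<1$ and then place $G$ inside the hypotheses of Theorem \ref{theorem1.5}. Those hypotheses are that $G$ is non-complete and satisfies $b(G)\leq 1$.

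\textbf{Step 1: $G$ is not complete.} Suppose $G\cong K_n$ with $n\geq 2$. For any nonempty $X\subseteq V(K_n)$ with $|X|\geq 2$, every vertex has a neighbour in $X$, so $N(X)=V(G)$. Such $X$ are therefore excluded from the minimum in the definition of $b(G)$. For a singleton $X=\{v\}$ we get $N(X)=V(G)\setminus\{v\}\neq V(G)$, so the ratio is $|N(X)|/|X|=n-1\geq 1$. Hence $b(K_n)=n-1\geq 1$. This contradicts $b(G)<1$, so $G$ is non-complete.

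\textbf{Step 2: apply Theorem \ref{theorem1.5}.} Since $G$ is non-complete and $b(G)<1\leq 1$, Theorem \ref{theorem1.5} gives $t(G)\leq b(G)<1$. This contradicts the assumption that $G$ is 1-tough. Therefore $b(G)\geq 1$.

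\textbf{Possible obstacle: the two notions of 1-tough must agree.} The paper defines 1-tough in two ways, as $t(G)\geq 1$ and in the abstract as $c(G-X)\leq |X|$ whenever $c(G-X)\geq 2$. For a non-complete graph these are equivalent, because $t(G)$ is the minimum of $|X|/c(G-X)$ over exactly those sets $X$. Step 1 shows the complete case never arises in the contradiction, so the equivalence is all that is needed. The only point requiring care is checking $b(K_n)$ directly from the definition, as done in Step 1.
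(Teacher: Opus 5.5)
Your proposal is correct and follows the paper's argument exactly: assume $b(G)<1$, use $b(K_n)=n-1\geq 1$ to conclude that $G$ is non-complete, and then apply Theorem~\ref{theorem1.5} to get $t(G)\leq b(G)<1$, a contradiction. Your direct check of $b(K_n)$ and your remark that the two definitions of 1-tough agree for non-complete graphs are accurate additions, though the paper does not need them.
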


It follows from Corollary \ref{corollary1.1} that the family of 1-tough graphs on at least two vertices is contained in the family of 1-binding graphs. Note that $t(\mathcal{B}_n^{a,b})<1$. This leads us to investigate the corresponding spectral extremal problem for 1-tough graphs. In 1990, Katerinis \cite{Katerinis} presented the following theorem.

\begin{thm}[Katerinis \cite{Katerinis}]\label{theorem1.6}
Let $a$ and $b$ be two positive integers such that $1\leq a<b$. If $t(G)\geq (a-1)+\frac{a}{b}$, then $G$ has an $[a,b]$-factor.
\end{thm}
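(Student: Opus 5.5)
The plan is to derive Theorem \ref{theorem1.6} from Lovász's criterion for $[a,b]$-factors. For $1\le a<b$, a graph $G$ has an $[a,b]$-factor if and only if
\[
\theta(S,T):=b|S|-a|T|+\sum_{v\in T}d_{G-S}(v)\ \ge 0
\]
for all disjoint $S,T\subseteq V(G)$. Since $a<b$, no parity or odd-component term appears. We argue by contradiction. Suppose $t(G)\ge (a-1)+\frac{a}{b}$ and $G$ has no $[a,b]$-factor. The complete graph is easy to handle directly, so assume $G$ is non-complete. Among all pairs with $\theta(S,T)<0$, choose one with $|T|$ minimal.

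\textbf{Step 1 (degree bound on $T$).} Deleting a vertex $v$ from $T$ changes $\theta$ by $a-d_{G-S}(v)$. By minimality of $|T|$, every $v\in T$ satisfies $d_{G-S}(v)\le a-1$. In particular $H:=G[T]$ has maximum degree at most $a-1$, and $T\neq\emptyset$.

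\textbf{Step 2 (greedy independent set and a cut).} Build an independent set $I=\{x_1,\dots,x_k\}$ in $H$ greedily: repeatedly pick a vertex $x_i$ of minimum degree in what remains of $H$, then delete its closed neighbourhood there. This gives
\[
|T|\le \sum_{i=1}^{k}\bigl(d_{H_i}(x_i)+1\bigr),
\]
where $H_i$ is the remaining graph at step $i$. Now set
\[
X=S\cup \Bigl(N_{G-S}(I)\Bigr).
\]
In $G-X$ each $x_i$ is isolated. Provided $G-X$ has at least two components (either $k\ge2$, or $k=1$ and something remains beyond $x_1$), toughness gives
\[
|S|+\sum_i d_{G-S}(x_i)\ \ge\ |X|\ \ge\ t(G)\,c(G-X)\ \ge\ t(G)\,k .
\]
The degenerate case where $G-X$ has a single component will be handled separately. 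There the bound $d_{G-S}\le a-1$ together with the nonnegativity of the terms $b|S|$ should already make $\theta\ge0$, or else $G$ is essentially complete.

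\textbf{Step 3 (counting).} Combine the violation $b|S|<a|T|-\sum_{v\in T}d_{G-S}(v)$ with the toughness bound above. Multiply the toughness inequality by $b$ and substitute for $b|S|$. We then need to show
\[
a|T|-\sum_{v\in T}d_{G-S}(v)\ \le\ b\sum_i d_{G-S}(x_i) - b\Bigl(a-1+\tfrac ab\Bigr)k + (\text{slack}) ,
\]
which contradicts the violation. To do this, bound the contribution of each greedy block $\{x_i\}\cup N_{H_i}(x_i)$ to $a|T|-\sum d_{G-S}$ by roughly $a(d_{H_i}(x_i)+1)-(\text{degrees})$. Use two facts here: vertices in $N_{H_i}(x_i)$ have $H$-degree at least $d_{H_i}(x_i)$ by the min-degree choice, and $d_{G-S}(v)\ge d_H(v)$.

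I expect Step 3 to be the main obstacle. It needs a careful per-block inequality, essentially of the form
\[
a(d+1)-(d+1)d\ \le\ b\,e_i-(ab-b+a)+\dots,
\]
where $d=d_{H_i}(x_i)\le a-1$ and $e_i=d_{G-S}(x_i)\ge d$. This inequality must hold uniformly over $0\le d\le a-1$, with the worst case at the extremes $d=0$ or $d=a-1$. I would first try proving it as a quadratic in $d$ checked at the endpoints. If the simple greedy bookkeeping is too lossy, I would instead refine it by taking $I$ to be a maximum-weight independent set and using an averaging argument.
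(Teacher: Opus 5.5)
The paper gives no proof of this theorem (it is quoted from Katerinis), so I am judging your outline on its own terms. Steps 1 and 2 are the standard setup: your greedy $I$ is a maximal independent set of $H$, so $X=S\cup(T\setminus I)\cup N_U(I)$ with $U=V(G)\setminus(S\cup T)$. There is a genuine gap in Step 3, however: the per-block inequality you plan to prove is false.

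\textbf{A sign slip.} Toughness gives $b|S|\ge\sum_i b\bigl(t(G)-d_{G-S}(x_i)\bigr)$, so what you must show is $\sum_{v\in T}(a-d_{G-S}(v))\le\sum_i b\bigl(t(G)-d_{G-S}(x_i)\bigr)$. Apart from the unspecified slack, your displayed right-hand side is the negative of this. It is therefore negative, while the left side is positive.

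\textbf{The per-block inequality fails.} Choosing $x_i$ by minimum degree in $H_i$ decouples two quantities. A block's deficiency is governed by the $(G-S)$-degrees of the neighbours of $x_i$, which you only know are at least $d_i$. Its cost in $X$ is governed by $d_{G-S}(x_i)$, which may be $a-1$. Suppose $d_{G-S}(x_i)=a-1$ and the $d_i=d$ block neighbours have $(G-S)$-degree $d$. Then the block's deficiency is $1+d(a-d)$, while toughness pays only $b\bigl(t(G)-(a-1)\bigr)\ge a$. The shortfall is $(d-1)(a-1-d)>0$ whenever $1<d<a-1$.

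Concretely, take $a=4$ and $G[U]$ complete, and let $H$ be $k$ disjoint triangles. In each triangle one vertex has one neighbour in $U$ and the other two have none. All $H$-degrees tie at $2$, so your rule may pick the vertex with the $U$-neighbour. Your cut then yields only $b|S|\ge 4k+3b+4$. This is compatible with the violation $b|S|\le 5k-1$ once $k\ge 3b+5$. Choosing a degree-$2$ vertex in each triangle instead would give the contradiction.

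So the bad case is interior: the bound $(d+1)(a-d)$ you wrote is concave in $d$. Checking a quadratic at $d\in\{0,a-1\}$ therefore proves nothing.

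\textbf{The missing idea.} Key the greedy on $d_{G-S}$ instead of $d_{H_i}$. Repeatedly choose a remaining vertex $x$ minimizing $j=d_{G-S}(x)$, and delete its closed neighbourhood in the remaining graph. This is the Enomoto--Jackson--Katerinis--Saito lemma behind Katerinis' proof.

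\begin{itemize}
\item Because $d_H(v)\le d_{G-S}(v)$, the block of $x$ has at most $j$ further vertices, each of $(G-S)$-degree at least $j$. So its deficiency is at most $(j+1)(a-j)$.
\item Its cost in $X$ is at most $d_H(x)+d_U(x)=j$.
\item Toughness then pays, per block, $b\bigl(t(G)-j\bigr)\ge b(a-1-j)+a\ge a(a-j)\ge (j+1)(a-j)$, using $b>a$ and $j\le a-1$.
\item Summing over blocks gives $b|S|\ge\sum_{v\in T}(a-d_{G-S}(v))$, contradicting $\theta(S,T)<0$.
\end{itemize}

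\textbf{The degenerate case.} This also needs an actual argument. It arises only if $|I|=1$ and $V(G)=X\cup\{x_1\}$. Then $\delta(G)\ge\kappa(G)\ge 2t(G)>2a-2$ gives $|S|\ge d_G(x_1)-(a-1)\ge a$. Also $|T|\le a$, so $\theta(S,T)\ge ab-a^2>0$, a contradiction.
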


By Theorem \ref{theorem1.6}, one can deduce the following corollary directly.

\begin{corollary}\label{corollary1.2}
Let $b>1$ be an integer. If $t(G)\geq \frac{1}{b}$, then $G$ has a $[1,b]$-factor.
\end{corollary}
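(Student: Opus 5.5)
Corollary \ref{corollary1.2} is an immediate specialization of Theorem \ref{theorem1.6}, so the plan is simply to substitute $a=1$ there and check the hypotheses. Let $b>1$ be an integer. Then the pair $(a,b)=(1,b)$ satisfies $1\leq a<b$, which is exactly the standing assumption of Theorem \ref{theorem1.6}. With $a=1$ the toughness threshold in that theorem becomes
\[
(a-1)+\frac{a}{b}=0+\frac{1}{b}=\frac{1}{b}.
\]

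Hence if $t(G)\geq \frac{1}{b}$, Theorem \ref{theorem1.6} applies directly and yields a $[1,b]$-factor of $G$. The only point requiring a moment's care is the convention for complete graphs. There $t(K_n)=n-1$, which for $n\geq 2$ is at least $1\geq\frac1b$; and in any case $K_n$ with $n\geq 2$ visibly has a $[1,b]$-factor, for instance a perfect matching together with one extra edge at a leftover vertex when $n$ is odd, or simply a spanning star-forest. So the statement holds there regardless of how Theorem \ref{theorem1.6} treats complete graphs.

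Degenerate small cases such as $K_1$, where no factor exists, are excluded implicitly, as in Theorem \ref{theorem1.6} itself. There is no real obstacle beyond verifying that the parameter range $a=1<b$ is admitted by Theorem \ref{theorem1.6}, which it is.
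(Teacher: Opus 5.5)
Your proposal is correct and is exactly the paper's argument: the corollary follows by setting $a=1$ in Theorem~\ref{theorem1.6}, whose threshold $(a-1)+\frac{a}{b}$ then becomes $\frac{1}{b}$. Your discussion of complete graphs is harmless but unnecessary, and $K_1$ does not need to be excluded separately, since under the stated convention $t(K_1)=0<\frac{1}{b}$ and the hypothesis already fails.
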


It follows from Corollary \ref{corollary1.2} that every 1-tough graph with $\delta(G)\geq 1$ has a $[1,b]$-factor for $1=a<b$. Naturally, we propose the following problem for $2\leq a<b$.

\begin{prob}\label{p1.4}
Let $a$ and $b$ be two integers with $2\leq a<b$. What is the maximum spectral radius among $n$-vertex 1-tough graphs with $\delta(G)\geq a$ that contain no $[a,b]$-factor?
\end{prob}

Concerning Problem \ref{p1.4}, we determine the maximum spectral radius among $n$-vertex 1-tough graphs with $\delta(G)\geq a$ that contain no $[a,b]$-factor and characterize the unique extremal graph with maximum spectral radius. Before presenting our result, we first introduce the following graphs.

Let $a$, $b$, and $n$ be three positive integers such that $2\leq a<b$ and $n\geq (b+1)^2(2a-1)^2+2b+3$. We use the same notation $S_1$, $S_2$, $R_1$, $R_2$, $C_1$, $C_2$ and $T$ as above. The same conventions apply to the following constructions. For $2\leq a<b\leq 2a-3$, let the graph ${T}_{n,1}^{a,b}$ (see Fig. \ref{fig:X'}) have a partition $V({T}_{n,1}^{a,b})=C_1\cup T$ with ${T}_{n,1}^{a,b}[C_1]=K_{n-b-1}$ and ${T}_{n,1}^{a,b}[T]=(b+1)K_1$. Every vertex of $S_1$ is adjacent to every vertex of $T$. Define the remaining edges between $R_1$ and $T$ by
\[N_{R_1}(u_{i+1})=
\begin{cases}
\{w_1, w_2, \dots, w_{2a-b-2}\},&i=0,\\
\{w_i\},&1\le i\le\min\{b-a+1,2a-b-3\},\\
\{w_{i+1}\},&2a-b-2\le i\le b-a+1,\\
\emptyset,&b-a+2\le i\le b.
\end{cases}
\]
\begin{figure}[H]
    \centering
    \begin{subfigure}[b]{0.4\textwidth}
        \centering
        \includegraphics[width=\linewidth]{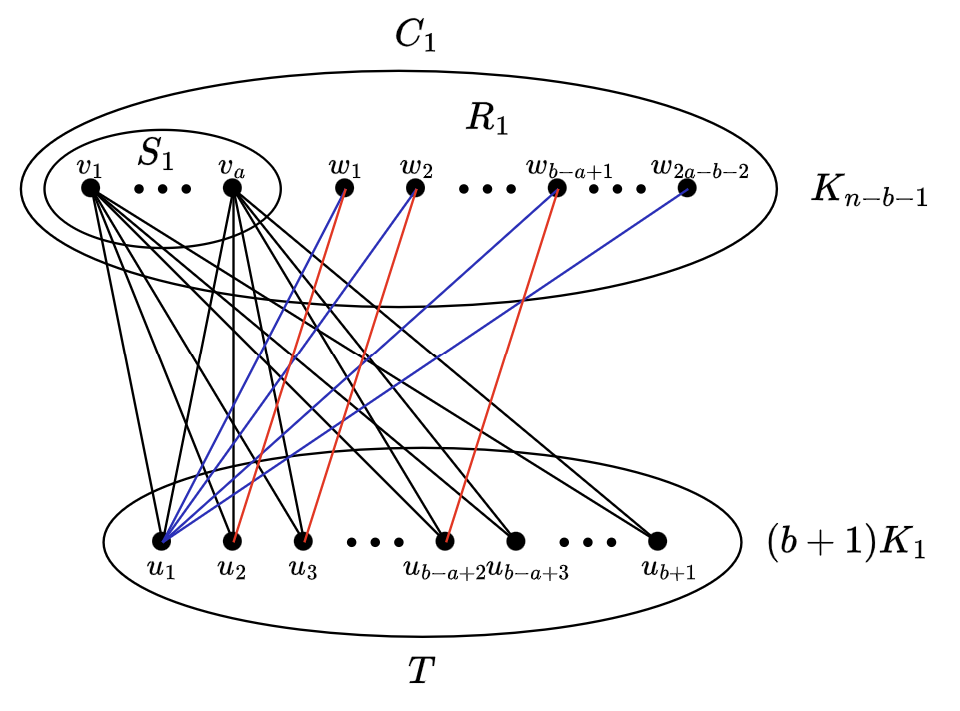}
        \caption{Graph ${T}_{n,1}^{a,b}$ when
        $b\leq \frac{3}{2}a-2$.}
        \label{fig:X1'}
    \end{subfigure}
    \hfill
    \begin{subfigure}[b]{0.45\textwidth}
        \centering
        \includegraphics[width=\linewidth]{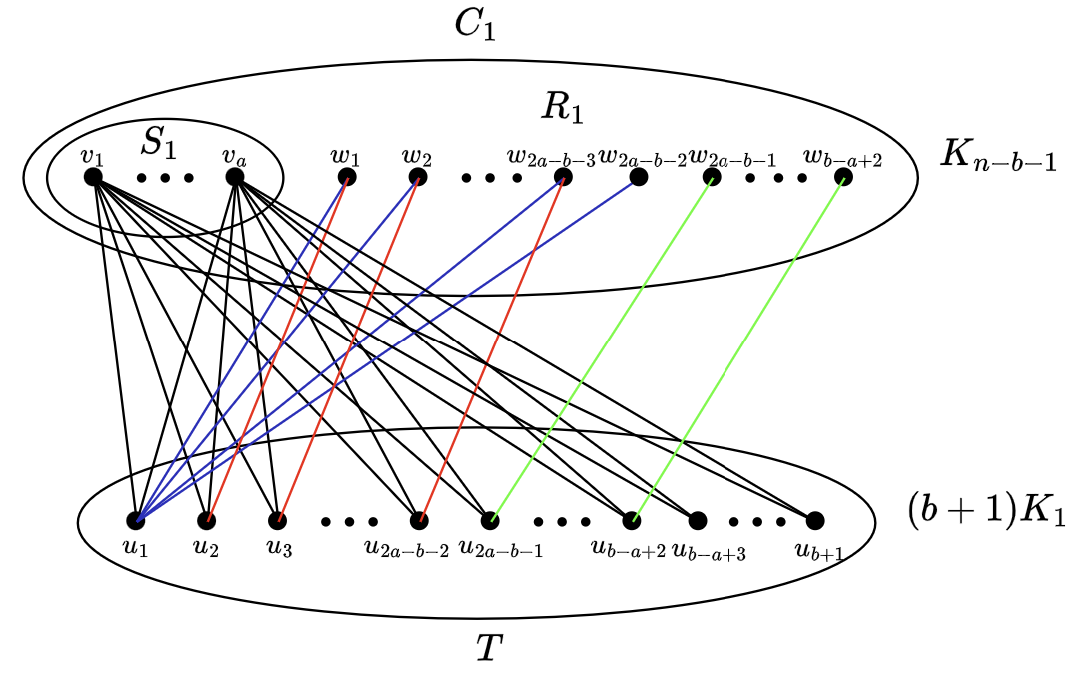}
        \caption{Graph ${T}_{n,1}^{a,b}$ when
        $\frac{3}{2}a-2<b\leq 2a-3$.}
        \label{fig:X2'}
    \end{subfigure}
    \caption{Graph ${T}_{n,1}^{a,b}$.}
    \label{fig:X'}
\end{figure}
\begin{figure}[htbp]
    \centering
    \begin{subfigure}[b]{0.42\textwidth}
        \centering
        \includegraphics[width=\linewidth]{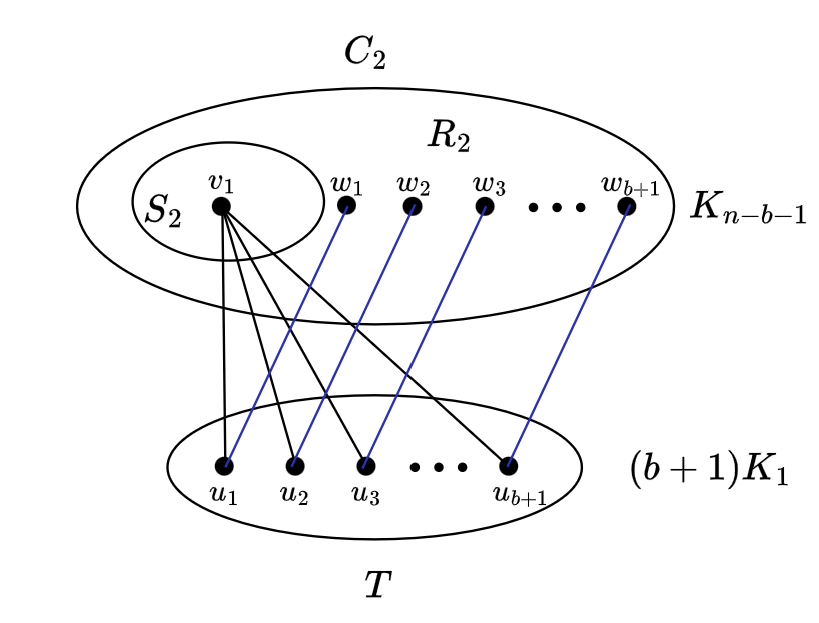}
        \caption{Graph ${T}_{n,2}^{a,b}$ when
        $a=2$.}
        \label{fig:Y1'}
    \end{subfigure}
    \hfill
    \begin{subfigure}[b]{0.43\textwidth}
        \centering
        \includegraphics[width=\linewidth]{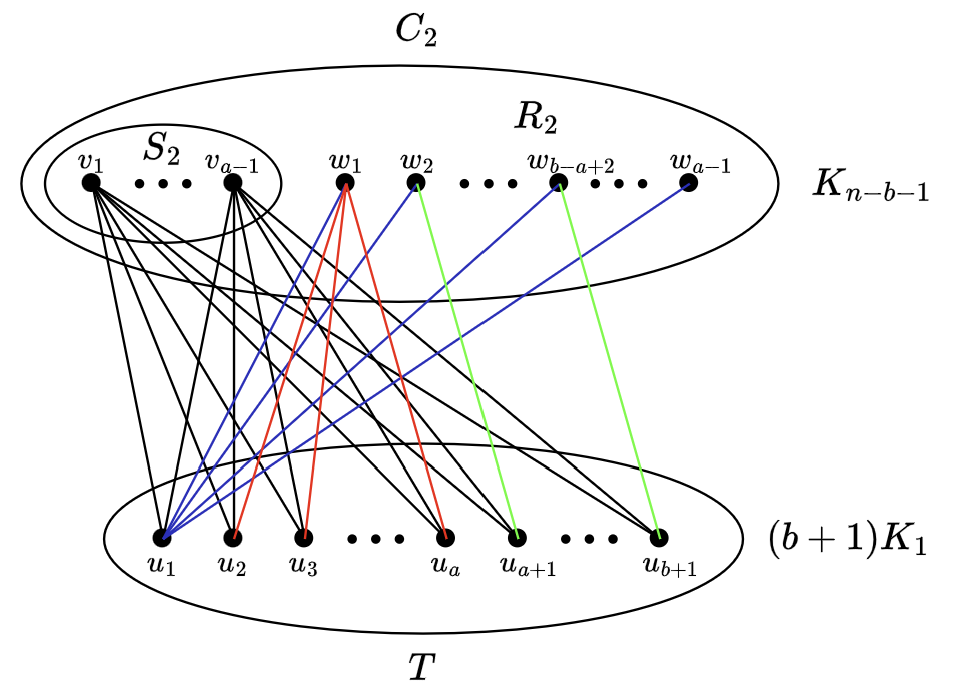}
        \caption{Graph ${T}_{n,2}^{a,b}$ when
        $a\geq 3$ and $b\leq 2a-4$.}
        \label{fig:Y2'}
    \end{subfigure}
       \hfill
    \begin{subfigure}[b]{0.44\textwidth}
        \centering
        \includegraphics[width=\linewidth]{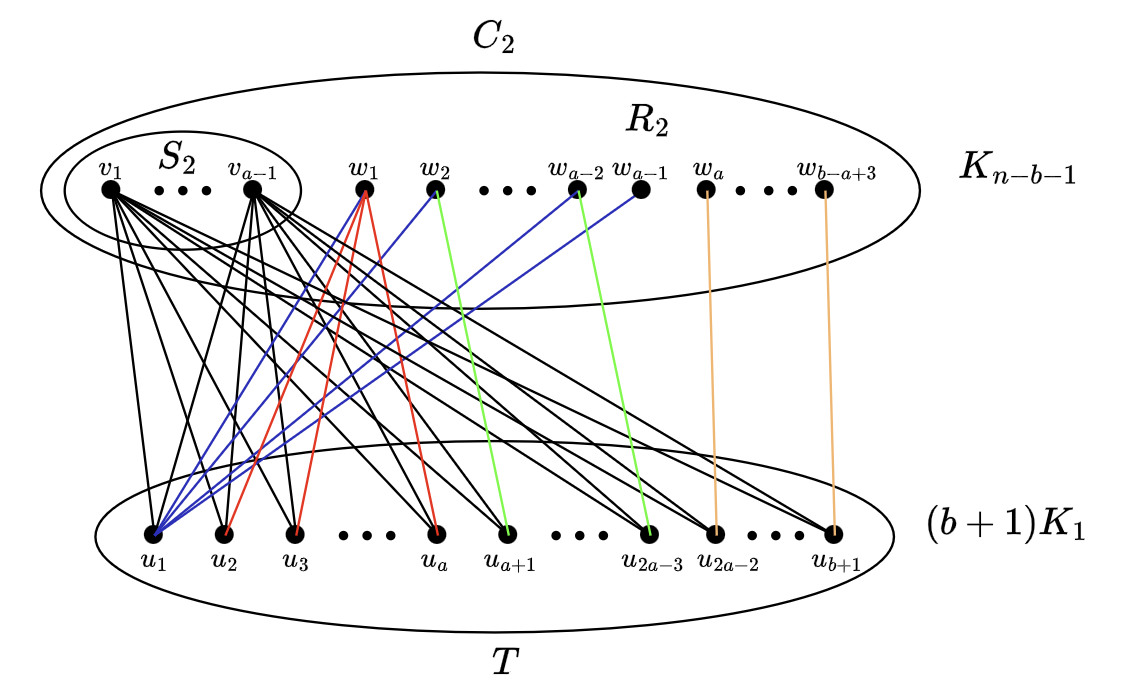}
        \caption{Graph ${T}_{n,2}^{a,b}$ when
        $a\geq 3$ and $b\geq 2a-3$.}
        \label{fig:Y3'}
    \end{subfigure}
    \caption{Graph ${T}_{n,2}^{a,b}$.}
    \label{fig:Y'}
\end{figure}

For $2\leq a<b$, let the graph ${T}_{n,2}^{a,b}$ (see Fig. \ref{fig:Y'}) have a vertex partition $V({T}_{n,2}^{a,b})=C_2\cup T$, where ${T}_{n,2}^{a,b}[C_2]=K_{n-b-1}$ and ${T}_{n,2}^{a,b}[T]=(b+1)K_1$. Every vertex of $S_2$ is adjacent to every vertex of $T$. If $a=2$, define the remaining edges between $R_2$ and $T$ by $N_{R_2}(u_i)=\{w_i\}$ for $i\in \{1,2,\dots,b+1\}$. If $a\geq 3$, define the remaining edges between $R_2$ and $T$ by
\[N_{R_2}(u_i)=
\begin{cases}
\{w_1, w_2, \dots, w_{a-1}\},&i=1,\\
\{w_1\}, &i\in\{2,3,\dots,a\},\\
\{w_{i-a+1}\}, &a+1\leq i\leq \min\{b+1, 2a-3\},\\
\{w_{i-a+2}\}, &2a-2\leq i\leq b+1.
\end{cases}
\]

We now define $\mathcal T_n^{a,b}$ by choosing between ${T}_{n,1}^{a,b}$ and ${T}_{n,2}^{a,b}$ according to the values of $a$ and $b$. Theorem \ref{theorem1.7} shows that it is the unique spectral extremal graph for Problem \ref{p1.4}.
\[
\mathcal{T}_n^{a,b}=
\begin{cases}
{T}_{n,1}^{a,b}, &b\le 2a-3 \ \text{and}\ 3a^2-4ab+b^2-3a+5b+1>0,\\
{T}_{n,2}^{a,b}, &\text{otherwise}.
\end{cases}
\]

\begin{thm}\label{theorem1.7}
Let $a$, $b$, and $n$ be three positive integers such that $2\leq a<b$ and $n\geq (b+1)^2(2a-1)^2+2b+3$. If $G$ is an $n$-vertex 1-tough graph with $\delta(G)\geq a$ which contains no $[a,b]$-factor, then $$\rho(G)\leq \rho(\mathcal{T}_n^{a,b}),$$ with equality if and only if $G\cong \mathcal{T}_n^{a,b}$.
\end{thm}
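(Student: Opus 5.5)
Our plan follows the standard route: a factor-theoretic structural theorem, then spectral comparison. The starting point is Lovász's $(g,f)$-factor theorem specialized to $[a,b]$-factors: $G$ has no $[a,b]$-factor iff there exist disjoint $S,T\subseteq V(G)$ with
\[
b|S|-a|T|+\sum_{v\in T}d_{G-S}(v)<0 .
\]
(For $a<b$ the parity term vanishes.) Fix such a pair with $|S|=s$, $|T|=t$. Since $\delta(G)\ge a$, each $v\in T$ satisfies $d_{G-S}(v)\ge a-s$.

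First we obtain a crude lower bound on $\rho(G)$. Note that $\mathcal{T}_n^{a,b}$ contains $K_{n-b-1}$, so $\rho(G)\ge\rho(\mathcal{T}_n^{a,b})>n-b-2$. We then bound $e(G)$ via $\rho\le\sqrt{2e(G)-n+1}$ (Hong's bound, applicable since 1-tough graphs are connected). Combined with the Lovász inequality, this forces the configuration to be small: $t$ is bounded in terms of $a,b$, and most of $T$ is sparse toward $V(G)-S-T$. The hypothesis $n\ge (b+1)^2(2a-1)^2+2b+3$ should absorb these estimates and give $t\le b+1+\dots$ and $s\le a$.

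Next we use 1-toughness. Removing $S\cup N_{G-S}(T')$ for suitable subsets $T'\subseteq T$ of vertices of small degree in $G-S$ isolates $T'$. This yields $|S|+|N_{G-S}(T')|\ge |T'|+1$ whenever $G-S-T$ is nonempty. A similar bound follows from the binding condition, but toughness is stronger by the "$+1$": the isolated pieces together with the remaining big component give $c\ge |T'|+1$. Together with Lovász's inequality $bs+\sum_{T}d_{G-S}(v)<at$, this leaves essentially two regimes: $s=a$ with $t=b+1$, or $s=a-1$ with $t=b+1$, and $T$ independent in the extremal case. In each regime $\sum_{v\in T}d_{G-S}(v)$ is forced to be as large as possible, namely $a(b+1)-ab-1$ and $a(b+1)-b(a-1)-1$ respectively. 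The toughness constraint then dictates how these edges into $R$ must be spread so that no small cut isolates too many $T$-vertices. Maximality of $\rho$ lets us add all edges inside $S\cup R$ and between $S$ and $T$, since adding edges preserves toughness and $\delta$. We must check that adding edges preserves non-existence of the factor; it does, because the Lovász deficiency only involves $S$, $T$ and edges from $T$, which we keep fixed.

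The main obstacle is the final step. Among the admissible distributions of the $R$–$T$ edges, we must identify the one maximizing $\rho$ and then compare the two candidates ${T}_{n,1}^{a,b}$ and ${T}_{n,2}^{a,b}$. For the first part we would use Perron-vector arguments: vertices of $R$ incident to $T$ have nearly equal entries $x_w\approx x_{\max}$, and $T$-entries are $\approx s x_{\max}/\rho$. Concentrating $R$–$T$ edges on few $R$-vertices and giving one $T$-vertex many neighbors increases $\rho$ (edge switching/Kelmans-type moves), subject to the toughness constraint. This should force exactly the displayed neighborhoods. For the comparison, we would compute $\rho$ of each via an equitable-ish quotient matrix, or via the characteristic equation, expanding $\rho=n-b-2+\frac{c_1}{n}+\frac{c_2}{n^2}+\cdots$. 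The sign of the difference of the second-order terms should reduce to the polynomial $3a^2-4ab+b^2-3a+5b+1$, and the $n$ bound ensures that higher-order terms cannot flip the sign. The delicate bookkeeping of lower-order terms, and verifying that toughness (not just binding) excludes every denser alternative, is where we expect most of the work.
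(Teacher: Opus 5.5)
Your outline follows the paper's route: the factor criterion, saturating $G-T$ and joining $S$ to all vertices, $t=b+1$ from an edge count against $\rho>n-b-2$, tightness of the deficiency, Perron-vector switching, and a final comparison of ${T}_{n,1}^{a,b}$ with ${T}_{n,2}^{a,b}$. However, the step where toughness together with Lovász's inequality is said to leave only the regimes $s\in\{a-1,a\}$ fails for $a\ge 3$. Those constraints give only $1\le s\le a$, plus $b\le 2a-3$ when $s=a$.

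For instance, take $K_1\vee(K_{n-b-2}\cup(b+1)K_1)$ and join each of the $b+1$ independent vertices to $a-1$ private vertices of $K_{n-b-2}$. Let $C$ be the clique on the $n-b-1$ vertices outside $T$.
\begin{itemize}
\item This graph has $\delta=a$.
\item It is 1-tough by Lemma~\ref{lemma2.6}, because $|N_C(X)|=1+(a-1)|X|\ge |X|+1$.
\item It has no $[a,b]$-factor, because $a(b+1)-(a-1)(b+1)=b+1>b\cdot 1$.
\item The set $S$ consisting of the vertex of $K_1$ is its only violating set, so $s=1$ is forced.
\end{itemize}
Graphs with $s\le a-2$ therefore really lie in $\mathcal{T}_n(a,b)$. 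They are not subgraphs of the candidates, so they can only be discarded by a spectral comparison between non-nested graphs, and your proposal contains no such step.

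The missing ingredient is a comparison that is uniform over all admissible configurations; this is the paper's Lemma~\ref{lemma2.7}. Suppose $G[C]=K_c$ with $c\ge (b+1)^2(2a-1)^2+b+2$, $|T|=b+1$, and $d_C(x)\le 2a-1$ for $x\in T$. Then $P(G)=\sum_{x\in T}d_C(x)^2$ orders the spectral radii, since $\rho-c+1=F_G(\rho)$ with $P(G)/(r(r+1))\le F_G(r)\le P(G)/((r+1)(r-b-1))$. For $s\le a-2$ we have $P\le (a+s-1)^2+ba^2<(2a-2)^2+ba^2=P({T}_{n,2}^{a,b})$, which closes the gap.

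This is in effect your own expansion: the $1/n$ coefficient vanishes and the $1/n^2$ coefficient is $P$. But it has to be proved with error bounds independent of the configuration and applied to every graph in the family, not only to the two final candidates. Used this way, it also fixes the degree sequence $(d_{G-S}(x):x\in T)$ in each regime and forces $E(G[T])=\emptyset$ through the equality case of the bound on $P$. That is more robust than your degree-concentrating switches, each of which would need a separate check that $\delta\ge a$ and 1-toughness survive. Switching via Lemma~\ref{lemma2.3} and Lemma~\ref{lemma2.9} is then needed only for structure that $P$ cannot detect, such as the overlap between $N_R(u_1)$ and $N_R(T\setminus\{u_1\})$.
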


\section{Preliminaries}
In this section, we introduce some notation and auxiliary results, which are crucial to the proofs of our main results.

\begin{lem}[Brouwer and Haemers \cite{Brouwer2012}]\label{lemma2.1}
If $H$ is a subgraph of a connected graph $G$, then $\rho(H) \leq \rho(G)$ with equality if and only if $H\cong G$.
\end{lem}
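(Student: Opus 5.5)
The statement to prove is Lemma \ref{lemma2.1}, the standard monotonicity of the spectral radius. I will prove it directly with the Perron--Frobenius theorem and the Rayleigh quotient.

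\textbf{Reduction to spanning subgraphs.} Let $H$ be a subgraph of the connected graph $G$, and write $n=|V(G)|$. Padding $A(H)$ with zero rows and columns for the vertices of $G$ not in $H$ gives an $n\times n$ symmetric matrix $A'$ with $0\le A'\le A(G)$ entrywise. This padding adds only zero eigenvalues, so $\rho(A')=\rho(H)$; note that $\rho(H)\ge 0$.

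\textbf{The inequality.} Let $y$ be a unit eigenvector of $A'$ for $\rho(H)$, and let $|y|$ be the vector of absolute values of its entries. Then
$$\rho(H)=y^{T}A'y\le |y|^{T}A'|y|\le |y|^{T}A(G)|y|\le \rho(G).$$
The middle step uses $A'\le A(G)$ entrywise together with $|y|\ge 0$. The last step is the Rayleigh quotient bound for the symmetric matrix $A(G)$.

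\textbf{The equality case.} Suppose $\rho(H)=\rho(G)$. Then every inequality above is an equality. In particular, $|y|$ attains the Rayleigh maximum, so it is an eigenvector of $A(G)$ for $\rho(G)$. Since $G$ is connected, the Perron--Frobenius theorem says this eigenspace is spanned by the Perron vector $x>0$. Hence $|y|=x$ is strictly positive.

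From $|y|^{T}\bigl(A(G)-A'\bigr)|y|=0$ with all entries of $|y|$ positive and $A(G)-A'\ge 0$ entrywise, we get $A'=A(G)$. So $H$ contains every vertex of $G$ (each vertex has degree at least one in the connected graph $G$, unless $n=1$, which is trivial) and every edge of $G$. Therefore $H=G$. Conversely, if $H\cong G$, equality is clear.

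\textbf{Main obstacle.} The only delicate point is that $H$ need not be connected, so its own eigenvector may have zero entries. The argument above avoids this by using the positivity of the Perron vector of $G$, not of $H$.
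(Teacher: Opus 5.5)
Your proof is correct and complete. The paper gives no proof of this lemma; it only cites Brouwer and Haemers, where the result follows from the comparison part of the Perron--Frobenius theorem for irreducible nonnegative matrices. That clause says: if $0\le B\le A$ entrywise, $A$ is irreducible and $B\neq A$, then every eigenvalue of $B$ has modulus less than $\rho(A)$. One applies it with $B$ the zero-padded $A(H)$ and $A=A(G)$.

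Your route is different in that it uses symmetry. The Rayleigh quotient gives the inequality in one line. For the equality case you only need that a nonnegative unit vector attaining the Rayleigh maximum is a $\rho(G)$-eigenvector, and that such a vector is strictly positive when $G$ is connected.

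This makes the argument more elementary. You do not even need simplicity of $\rho(G)$. If $A(G)|y|=\rho(G)|y|$ and $|y|_v=0$, then $\sum_{u\sim v}|y|_u=0$, so every neighbour of $v$ also has entry $0$. Connectivity then forces $|y|=0$, a contradiction, so $|y|>0$ directly.

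The price is that your argument is tied to symmetric matrices. The Perron--Frobenius comparison also covers non-symmetric nonnegative matrices, for example adjacency matrices of digraphs, where no Rayleigh principle is available.

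You handle the two delicate points well. Using the Perron vector of $G$ rather than an eigenvector of the possibly disconnected $H$ is the right way to avoid zero entries. Using connectivity of $G$ to rule out isolated vertices is what upgrades $A'=A(G)$ to $H=G$.
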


In 1990, Heinrich et al. \cite{Heinrich} established the following necessary and sufficient condition for a graph to contain an $[a,b]$-factor.

\begin{lem}[Heinrich et al. \cite{Heinrich}]\label{lemma2.2}
Let $a$ and $b$ be two positive integers with $a<b.$ Then a graph $G$ contains an $[a,b]$-factor if and only if for any $S\subseteq V(G),$
\begin{eqnarray*}
a|T|-\sum_{v\in T}d_{G-S}(v)\leq b|S|,
\end{eqnarray*}
where $T=\{v\in V(G)\setminus S \mid d_{G-S}(v)<a\}.$
\end{lem}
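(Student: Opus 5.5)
The plan is to derive Lemma \ref{lemma2.2} from Lovász's $(g,f)$-factor theorem, and to use the strict inequality $a<b$ to remove its parity term. Recall Lovász's theorem: for $g\le f$, a graph $G$ has a spanning subgraph $F$ with $g(v)\le d_F(v)\le f(v)$ for all $v$ if and only if, for all disjoint $S,T\subseteq V(G)$,
\[
\delta(S,T)=f(S)-g(T)+\sum_{v\in T}d_{G-S}(v)-h(S,T)\ge 0 .
\]
Here $h(S,T)$ is the number of components $C$ of $G-(S\cup T)$ with $g(v)=f(v)$ on all of $C$ and $f(C)+|[C,T]|_G$ odd. We take $g\equiv a$ and $f\equiv b$. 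Since $a<b$, no component has $g=f$ on it, so $h(S,T)=0$ for all $S,T$.

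With $h$ gone, $G$ has an $[a,b]$-factor if and only if
\[
a|T|-\sum_{v\in T}d_{G-S}(v)\le b|S|
\]
for \emph{all} disjoint pairs $(S,T)$. The next step reduces this to the single $T$ named in the lemma. Fix $S$. The left side equals $\sum_{v\in T}\bigl(a-d_{G-S}(v)\bigr)$. Each summand is positive exactly when $d_{G-S}(v)<a$. So the left side is maximized by taking $T$ to be the set of all $v\in V(G)\setminus S$ with $d_{G-S}(v)<a$.

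The two directions then follow. If an $[a,b]$-factor exists, the inequality holds for every pair, in particular for this $T$. Conversely, if the inequality holds for this maximizing $T$ and every $S$, it holds for every pair $(S,T)$, and Lovász's theorem gives the factor.

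If a self-contained argument is preferred to citing Lovász, I would prove sufficiency directly by an augmenting-trail argument. Choose a spanning subgraph $F$ with $\Delta(F)\le b$ minimizing the deficiency $\sum_v \max\{0,a-d_F(v)\}$, and assume some vertex $u$ has $d_F(u)<a$. Grow the set of vertices reachable from $u$ by alternating trails that start with a non-$F$ edge. Let $S$ be the reachable vertices that are saturated, i.e. $d_F=b$, and that are entered via non-$F$ edges. Let $T$ be the reachable vertices entered via $F$-edges, which must have $d_F\le a$. Minimality of $F$ forbids any augmenting trail, since otherwise the deficiency could be lowered. Counting $F$-edges between $S$ and $T$ then gives $a|T|-\sum_{v\in T}d_{G-S}(v)>b|S|$, contradicting the hypothesis.

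The main obstacle in the direct route is the parity issue: blossoms, i.e. odd alternating cycles, occur as in Tutte/Edmonds. This is exactly the term $h(S,T)$ in Lovász's formula. The condition $a<b$ lets every vertex with $a\le d_F< b$ absorb an extra edge, and this is what should make odd components harmless. To handle this cleanly I would first try the reduction to a bipartite flow on the vertex-edge incidence structure. If that fails, I would simply invoke Lovász's theorem as above, where the vanishing of $h$ is immediate.
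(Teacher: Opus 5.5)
The paper gives no proof of this lemma. It quotes it from Heinrich, Hell, Kirkpatrick and Liu, whose criterion for $(g,f)$-factors with $g<f$ this is, specialized to $g\equiv a$, $f\equiv b$.

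Your derivation from Lov\'asz's $(g,f)$-factor theorem is correct and complete.
\begin{itemize}
\item Since $a<b$, no component of $G-(S\cup T)$ can have $g=f$ on all its vertices, so $h(S,T)\equiv 0$.
\item For fixed $S$, the quantity $\sum_{v\in T}(a-d_{G-S}(v))$ is maximized over $T\subseteq V(G)\setminus S$ by the set of vertices with $d_{G-S}(v)<a$. So quantifying over all disjoint pairs is equivalent to quantifying over $S$ with this canonical $T$.
\end{itemize}
This route is short, but it rests on Lov\'asz's theorem, which is considerably deeper than what is needed.

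Your fallback direct argument is the more elementary route, and it is more nearly finished than you think: the blossom obstacle does not arise. Choose $F$ minimizing the deficiency subject to $\Delta(F)\le b$.
\begin{itemize}
\item Every vertex reached from $u$ by an alternating trail ending in a non-$F$ edge must have $d_F=b$; otherwise switching the trail lowers the deficiency. For a closed trail back to $u$, note that $d_F(u)+2\le a+1\le b$, so $u$ is never reached this way.
\item Every vertex other than $u$ reached by a trail ending in an $F$-edge has $d_F\le a$, again by switching.
\end{itemize}
Since $a<b$, the two reachable sets $S$ and $T\ni u$ are disjoint, and this is exactly where $a<b$ does the work that kills $h$ in Lov\'asz's formula. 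The closure properties then follow by extending or truncating trails: $F$-edges at $S$ go to $T$, and non-$F$ edges at $T$ go to $S$. Counting gives $\sum_{v\in T}d_{G-S}(v)=\sum_{v\in T}d_F(v)-b|S|$, hence $a|T|-\sum_{v\in T}d_{G-S}(v)\ge b|S|+a-d_F(u)>b|S|$. No flow reduction is needed.
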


According to Lemma \ref{lemma2.2}, one can obtain the following corollary directly.

\begin{corollary}\label{corollary2.1}
Let $a$ and $b$ be two positive integers with $a<b.$ Then a graph $G$ contains no $[a,b]$-factor if and only if there exists $S\subseteq V(G)$ such that
\begin{eqnarray}\label{equation1.1}
\sum_{v\in T}d_{G-S}(v)\leq a|T|-b|S|-1,
\end{eqnarray}
where $T=\{v\in V(G)\setminus S \mid d_{G-S}(v)<a\}.$
\end{corollary}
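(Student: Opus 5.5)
The plan is to negate Lemma \ref{lemma2.2} directly and then use integrality to convert the strict inequality into the form (\ref{equation1.1}). Fix positive integers $a<b$ and a graph $G$. By Lemma \ref{lemma2.2}, $G$ contains an $[a,b]$-factor if and only if every $S\subseteq V(G)$ satisfies
\begin{eqnarray*}
a|T|-\sum_{v\in T}d_{G-S}(v)\leq b|S|,
\end{eqnarray*}
where $T=\{v\in V(G)\setminus S \mid d_{G-S}(v)<a\}$. Taking the logical negation of both sides of this equivalence, $G$ contains no $[a,b]$-factor if and only if some $S\subseteq V(G)$, with $T$ defined from $S$ in the same way, satisfies
\begin{eqnarray*}
a|T|-\sum_{v\in T}d_{G-S}(v)> b|S|.
\end{eqnarray*}

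Next I would note that $a$, $b$, $|S|$, $|T|$ and every degree $d_{G-S}(v)$ are integers. Hence both sides of the strict inequality are integers, and for integers $x,y$ we have $x>y$ if and only if $x\geq y+1$. So the strict inequality holds if and only if $a|T|-\sum_{v\in T}d_{G-S}(v)\geq b|S|+1$. Rearranging gives exactly $\sum_{v\in T}d_{G-S}(v)\leq a|T|-b|S|-1$, which is (\ref{equation1.1}).

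Every step is an equivalence, so both directions of the corollary follow at once. The only point needing care is to keep $T$ defined from $S$ exactly as in Lemma \ref{lemma2.2}, so that the negation is applied to the same statement. There is no real obstacle here, since the corollary is a restatement of Lemma \ref{lemma2.2} using integrality.
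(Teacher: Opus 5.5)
Your proposal is correct and matches the paper, which derives this corollary directly from Lemma \ref{lemma2.2} by negating the characterization and using integrality to turn the strict inequality into (\ref{equation1.1}). You are right that the only care needed is keeping $T$ defined from $S$ exactly as in the lemma.
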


\begin{lem}[Wu et al. \cite{Wu2005}]\label{lemma2.3}
Let $G$ be a connected graph with $v_{i}\neq v_{j}\in V(G)$ and $\emptyset \neq S\subseteq N(v_{j})\setminus (N(v_i)\cup\{v_i\})$. Assume that $G'=G-\{v_{j}v|v\in S\}+\{v_{i}v|v\in S\}$ and $(x_{1},\ldots,x_{n})^{T}$ is the Perron vector of $A(G)$, where $x_{i}$ corresponds to $v_{i}$. If $x_{i}\geq x_{j}$, then $\rho(G')>\rho(G)$.
\end{lem}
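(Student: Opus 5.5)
The plan is the standard Rayleigh quotient argument, followed by a short eigen-equation argument for strictness. Let $x=(x_1,\ldots,x_n)^T$ be the Perron vector of $A(G)$, normalized so that $x^Tx=1$. Since $G$ is connected, $x$ is entrywise positive and $\rho(G)=x^TA(G)x$.

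First I check that $G'$ is a well-defined simple graph. Every $v\in S$ is a neighbour of $v_j$, so each removed edge $v_jv$ exists. Since $S\cap(N(v_i)\cup\{v_i\})=\emptyset$, each added edge $v_iv$ is a genuine new edge and not a loop. Only the edges incident to $S$ at $v_j$ and at $v_i$ change, so
\[
x^TA(G')x-x^TA(G)x=2\sum_{v\in S}x_v\,(x_i-x_j)\geq 0,
\]
using $x_i\geq x_j$ and $x_v>0$. By the Rayleigh principle, which holds for any real symmetric matrix, so $G'$ need not be connected,
\[
\rho(G')\geq x^TA(G')x\geq x^TA(G)x=\rho(G).
\]

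It remains to exclude equality, and this is the only delicate step, because $x_i=x_j$ is allowed. Suppose $\rho(G')=\rho(G)=:\rho$. Then both inequalities above are equalities, so the unit vector $x$ attains the maximum of the Rayleigh quotient of $A(G')$. Hence $x$ is an eigenvector of $A(G')$ for $\rho$, that is, $A(G')x=\rho x$.

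Now compare the eigen-equations of $A(G)$ and $A(G')$ at the vertex $v_i$. Because $v_i\notin S$, the neighbourhood of $v_i$ only grows: $N_{G'}(v_i)=N_G(v_i)\cup S$, a disjoint union. Therefore
\[
\rho x_i=\sum_{v\in N_{G'}(v_i)}x_v=\sum_{v\in N_G(v_i)}x_v+\sum_{v\in S}x_v=\rho x_i+\sum_{v\in S}x_v .
\]
Since $S\neq\emptyset$ and $x$ is positive, the last sum is strictly positive, which is a contradiction. Hence $\rho(G')>\rho(G)$.

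The only point needing care is that $G'$ may fail to be connected. This causes no difficulty, because the argument never uses a Perron vector of $G'$: it only uses that a maximizer of the Rayleigh quotient is an eigenvector for the largest eigenvalue.
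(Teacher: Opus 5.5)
Your proof is correct and is essentially the standard argument of Wu et al.\ (the paper gives no proof of its own and only cites that reference): you compare Rayleigh quotients via $x^TA(G')x-x^TA(G)x=2(x_i-x_j)\sum_{v\in S}x_v\ge 0$, note that equality would force $x$ to be an eigenvector of $A(G')$ for $\rho$, and derive a contradiction from the eigen-equation at $v_i$. Your remark that connectivity of $G'$ is never needed is accurate, since the argument only uses that a maximizer of the Rayleigh quotient is an eigenvector for the largest eigenvalue.
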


\begin{lem}[Chv\'atal \cite{Chvatal}]\label{lemma2.4}
If $H$ is a spanning subgraph of $G$, then $t(H)\leq t(G)$.
\end{lem}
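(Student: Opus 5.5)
The plan is to compare, for each fixed $X\subseteq V(G)$, the number of components of $H-X$ with that of $G-X$, and then minimize over $X$. The key observation is monotonicity: since $V(H)=V(G)$ and $E(H)\subseteq E(G)$, for every $X\subseteq V(G)$ the graph $H-X$ is a spanning subgraph of $G-X$. Any two vertices joined by a path in $H-X$ are joined by the same path in $G-X$. Hence every component of $G-X$ is a union of components of $H-X$, which gives
\[
c(H-X)\ \geq\ c(G-X)\qquad\text{for all } X\subseteq V(G).
\]

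I will then split into cases according to whether $G$ or $H$ is complete, because the definition of $t$ treats $K_n$ separately.

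If $H$ is complete, then $H$ is a spanning subgraph of $G$ containing all possible edges. So $G=H$, and the inequality holds with equality.

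If $G$ is non-complete, then $H$ is also non-complete. Take any $X$ attaining the minimum in the definition of $t(G)$, so $c(G-X)\ge 2$ and $t(G)=|X|/c(G-X)$. By the displayed inequality, $c(H-X)\ge c(G-X)\ge 2$, so $X$ is admissible in the definition of $t(H)$. Therefore
\[
t(H)\le \frac{|X|}{c(H-X)}\le \frac{|X|}{c(G-X)}=t(G).
\]

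The only remaining case is $G\cong K_n$ with $H$ non-complete, where $t(G)=n-1$ by convention. Here I pick two nonadjacent vertices $u,v$ of $H$ and set $X=V(H)\setminus\{u,v\}$. Then $H-X$ consists of the two isolated vertices $u$ and $v$, so $c(H-X)=2$. This gives
\[
t(H)\le \frac{n-2}{2}\le n-1=t(G).
\]

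I expect no real obstacle. The only point needing care is this last boundary case, because the convention $t(K_n)=n-1$ is not covered by the minimization argument and must be handled by the explicit choice of $X$.
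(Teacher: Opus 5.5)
Your proof is correct and complete. The monotonicity $c(H-X)\ge c(G-X)$ for every $X$, together with your separate handling of the convention $t(K_n)=n-1$ via $X=V(H)\setminus\{u,v\}$ for a nonadjacent pair $u,v$, covers all cases. The paper gives no proof of its own and simply cites Chv\'atal; your argument is the standard one behind that citation.
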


\begin{lem}\label{lemma2.5}
Let $V(G)=C\cup T$ be a vertex partition, where $G[C]$ is a clique, $T$ is an independent set, and $|C|\geq |T|+2$. Then $b(G)\geq 1$ if and only if for any $X\subseteq T$,
\begin{eqnarray}\label{equation2.1}
|N_{G}(X)|\geq |X|.
\end{eqnarray}
\end{lem}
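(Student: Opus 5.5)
The plan is to prove the two directions separately, using only the structure of $G$: $G[C]$ is complete, $T$ is independent, so every neighbour of a vertex of $T$ lies in $C$.

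\emph{Necessity.} Suppose $b(G)\geq 1$ and let $\emptyset\neq X\subseteq T$. Since $T$ is independent, $N_G(X)\subseteq C$. In particular $N_G(X)$ misses $X\neq\emptyset$, so $N_G(X)\neq V(G)$. The definition of $b(G)$ then gives $|N_G(X)|\geq b(G)|X|\geq |X|$, which is (\ref{equation2.1}). (For $X=\emptyset$ the inequality is trivial.)

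\emph{Sufficiency.} Assume (\ref{equation2.1}) holds for all $X\subseteq T$. Take any nonempty $X\subseteq V(G)$ with $N_G(X)\neq V(G)$, and write $X_C=X\cap C$ and $X_T=X\cap T$. I split into three cases according to $|X_C|$.

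\emph{Case $X_C=\emptyset$.} Then $X\subseteq T$, and the hypothesis gives $|N_G(X)|\geq|X|$ directly.

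\emph{Case $|X_C|=1$, say $X_C=\{v\}$.} Then $C\setminus\{v\}\subseteq N_G(v)\subseteq N_G(X)$. Using $|C|\geq |T|+2$,
$$|N_G(X)|\geq |C|-1\geq |T|+1\geq |X_T|+1=|X|.$$
This is exactly where the hypothesis $|C|\geq|T|+2$ is needed.

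\emph{Case $|X_C|\geq 2$.} Every vertex of $C$ is adjacent to some vertex of $X_C$, so $C\subseteq N_G(X)$. Let $Y=T\setminus N_G(X)$. Then
$$N_G(X)=C\cup(T\setminus Y),\qquad |N_G(X)|=|C|+|T|-|Y|.$$
Each $y\in Y$ has no neighbour in $X$; since $N_G(y)\subseteq C$, this means $N_G(y)\subseteq C\setminus X_C$. Hence $N_G(Y)\subseteq C\setminus X_C$. Applying (\ref{equation2.1}) to $Y\subseteq T$ gives
$$|Y|\leq |N_G(Y)|\leq |C|-|X_C|.$$
Therefore
$$|N_G(X)|=|C|+|T|-|Y|\geq |T|+|X_C|\geq |X_T|+|X_C|=|X|.$$

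In all three cases $|N_G(X)|/|X|\geq 1$, so $b(G)\geq 1$.

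The step that needs an idea is the case $|X_C|\geq 2$. The naive bound $|N_G(X)|\geq |C|$ is not enough there, because $|X|$ can be as large as $|C|+|T|$. The argument instead applies the hypothesis to the set $Y$ of $T$-vertices missed by $N_G(X)$, which trades the $T$-vertices outside $N_G(X)$ against $|X_C|$.
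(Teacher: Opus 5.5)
Your proof is correct and follows the paper's argument essentially step for step. You use the same three-case split on $|X\cap C|$, and your set $Y=T\setminus N_G(X)$ is exactly the paper's set $M$ of $T$-vertices with no neighbour in $X\cap C$, bounded via the hypothesis in the same way.
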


\begin{proof}
Since $E(G[T])=\emptyset$, we have $N_{G}(X)=N_{C}(X)$ for every $X\subseteq T$. The inequality \eqref{equation2.1} is trivial when $X=\emptyset$. If $\emptyset \neq X\subseteq T$, then $N_{G}(X)\subseteq C$, and hence $N_{G}(X)\neq V(G)$. Therefore $b(G)\geq 1$ implies  $|N_{C}(X)|=|N_{G}(X)|\geq |X|$ for any $\emptyset \neq X\subseteq T$.

Conversely, assume $|N_{C}(X)|\geq |X|$ for all $X\subseteq T$. Let $\emptyset \neq Y \subseteq V(G)$ satisfy $N_{G}(Y)\neq V(G)$, and define $Y=A\cup B$, $A\subseteq C$ and $B\subseteq T$. Next we prove $|N_{G}(Y)|\geq |Y|$. If $A=\emptyset$, then we have $$|N_{G}(Y)|=|N_{C}(B)|\geq |B|=|Y|.$$ If $|A|=1$, the vertex of $A$ is adjacent to every vertex of $C\setminus A$, so $$|N_{G}(Y)|\geq |C|-1\geq |T|+1\geq |B|+1=|Y|.$$ Now assume $|A|\geq 2$ and let $M=\{x\in T: N_{C}(x)\cap A=\emptyset\}$. Since $G[C]$ is a clique and $|A|\geq 2$, the definition of $M$ gives $C\cup (T\setminus M)\subseteq N_{G}(Y)$. Together with $N_{C}(M)\subseteq C\setminus A$, we have $|M|\leq |N_{C}(M)|\leq |C\setminus A|.$ Then we can obtain that
\begin{eqnarray*}
|N_{G}(Y)|&\geq& |C|+|T|-|M|\geq |C|+|T|-|C\setminus A|\\
&=&|A|+|T|\geq |A|+|B|=|Y|.
\end{eqnarray*}
Hence $b(G)=\min\{\frac{|N_G(Y)|}{|Y|}:\emptyset\neq Y\subseteq V(G), N_G(Y)\neq V(G)\}\geq 1$.
\end{proof}

\begin{lem}\label{lemma2.6}
Let $V(G)=C\cup T$ be a vertex partition, where $G[C]$ is a clique, $T$ is an independent set, and $|C|\geq |T|+2$. Then $t(G)\geq 1$ if and only if for any $\emptyset \neq X\subseteq T$,
\begin{eqnarray}\label{equation2.2}
|N_C(X)|\geq |X|+1.
\end{eqnarray}
\end{lem}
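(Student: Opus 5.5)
We prove the two implications separately, in the same way as Lemma \ref{lemma2.5}. Since $T$ is independent, $N_G(X)=N_C(X)$ for every $X\subseteq T$.

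\textbf{Necessity.} Assume $t(G)\geq 1$ and let $\emptyset\neq X\subseteq T$. We may assume $N_C(X)\neq C$, since otherwise $|N_C(X)|=|C|\geq |T|+2\geq |X|+1$. Put $Z=N_C(X)$ and delete it. In $G-Z$, every vertex of $X$ is isolated, because all its neighbours lie in $Z$. The nonempty clique $C\setminus Z$ lies in at least one further component. Hence $c(G-Z)\geq |X|+1\geq 2$, and $1$-toughness gives $|Z|\geq c(G-Z)\geq |X|+1$. (If $Z=\emptyset$, this already contradicts $c(G-Z)\geq 2$, so in fact $Z\neq\emptyset$ is forced.)

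\textbf{Sufficiency.} Assume $|N_C(X)|\geq |X|+1$ for all $\emptyset\neq X\subseteq T$. Let $Y\subseteq V(G)$ with $c(G-Y)\geq 2$, and write $Y=A\cup B$ with $A\subseteq C$ and $B\subseteq T$. We must show $c(G-Y)\leq |Y|$.

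\emph{Case $A=C$.} Then $G-Y=G[T\setminus B]$ has only isolated vertices, so $c(G-Y)=|T|-|B|\leq |T|<|C|\leq |Y|$.

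\emph{Case $A\neq C$.} The clique $C\setminus A$ lies in a single component $K$. Every other component is a single vertex $x\in T\setminus B$ with $N_C(x)\subseteq A$. Let $M$ be the set of these vertices. Then $c(G-Y)=|M|+1$, and $M\neq\emptyset$ because $c(G-Y)\geq 2$. Since $N_C(M)\subseteq A$, the hypothesis applied to $X=M$ gives
\[
|A|\geq |N_C(M)|\geq |M|+1=c(G-Y).
\]
Therefore $|Y|\geq |A|\geq c(G-Y)$, so $t(G)\geq 1$.

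If $G$ is complete, then $t(G)=n-1\geq 1$ and there is nothing to check. The key step is identifying the components of $G-Y$ as one big component plus isolated vertices of $T$, which the clique structure makes immediate. The only mild care needed is the degenerate situation $A=C$, which is where the assumption $|C|\geq |T|+2$ is used.
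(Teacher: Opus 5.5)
Your proposal is correct and follows essentially the same route as the paper: for necessity you delete $N_C(X)$ after setting aside the case $N_C(X)=C$, and for sufficiency you split on whether the deleted set contains all of $C$, otherwise bounding the number of components via the set of isolated vertices of $T$ whose $C$-neighbourhood lies in the deleted set (the paper's $U$, your $M$). You spell out a bit more than the paper, namely why each vertex of $X$ is isolated, the degenerate case $Z=\emptyset$, and the complete-graph convention, but the argument is the same.
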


\begin{proof}
Suppose first $t(G)\geq 1$. For any $\emptyset \neq X\subseteq T$ with $N_C(X)=C$, we have $|N_C(X)|=|C|\geq |T|+2>|X|+1.$ For any $\emptyset \neq X\subseteq T$ with $N_C(X)\neq C$, we can deduce that $|N_C(X)|\geq c(G-N_C(X))\geq |X|+1.$

Conversely, assume that for any $\emptyset \neq X\subseteq T$, \eqref{equation2.2} holds. Let $Q\subseteq V(G)$ satisfy $c(G-Q)\geq 2.$ If $C\subseteq Q$, then $c(G-Q)\leq |T|<|C|\leq |Q|$. If $C\nsubseteq Q$, let $U=\{u\in T\setminus Q: N_C(u)\subseteq Q\}$. Since $c(G-Q)\geq 2$, we have $U\neq \emptyset$. Then $c(G-Q)=|U|+1\leq |N_C(U)|\leq |Q|.$ Therefore $t(G)=\min\{\frac{|Q|}{c(G-Q)}\mid Q\subset V(G)\, \text{and}\, c(G-Q)\geq 2\}\geq 1$.
\end{proof}

\begin{lem}\label{lemma2.7}
Let $a$ and $b$ be two positive integers with $2\leq a<b.$ For $i\in \{1,2\}$, let $G_i$ be a connected graph with a partition $V(G_i)=C_i\cup T_i,$ where $|C_i|=c$, $G_i[C_i]=K_c$, $|T_i|=b+1$ and $d_{C_i}(x)\leq 2a-1$ for every $x\in T_i.$ Define $P(G_i)=\sum_{x\in T_i}d_{C_i}(x)^2$. If $c\geq (b+1)^2(2a-1)^2+b+2$, then $P(G_1)>P(G_2)$ implies $\rho(G_1)>\rho(G_2)$.
\end{lem}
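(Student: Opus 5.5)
The idea is to compare $\rho(G_1)$ and $\rho(G_2)$ through a closed form for the spectral radius that depends on $T_i$ only through its degree data into $C_i$, up to a small error. Fix $i$, write $\rho=\rho(G_i)$, and let $x$ be the Perron vector. Since $T_i$ is a set of $b+1$ vertices, each with at most $2a-1$ neighbours in $C_i$, and $G_i[C_i]=K_c$, we have $c-1\le \rho\le c-1+O(1)$ (say $\rho< c+ (b+1)$, because $e(G_i)\le \binom{c}{2}+(b+1)(2a-1)$). For $u\in T_i$ the eigen-equation gives $\rho x_u=\sum_{w\in N(u)}x_w$. The neighbours of $u$ lie in $C_i$, and possibly in $T_i$; the lemma does not state that $T_i$ is independent, but the intended use has $T_i$ independent, and in general the terms from $T$ are of lower order. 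Every vertex of $C_i$ has Perron entry close to a common value $\beta$. Hence $x_u\approx d_{C_i}(u)\beta/\rho$.

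\textbf{Step 1: quotient-type equation.} For $w\in C_i$ write $\rho x_w=\sum_{w'\in C_i\setminus\{w\}}x_{w'}+\sum_{u\in N_{T_i}(w)}x_u$. Summing over $C_i$ with $X=\sum_{w\in C_i}x_w$ and $Y_{\text{edge}}=\sum_{u\in T_i} d_{C_i}(u)x_u$ gives
\[ \rho X=(c-1)X+Y_{\text{edge}}. \]
Substituting $x_u=\rho^{-1}\sum_{w\in N_{C}(u)}x_w$ gives $Y_{\text{edge}}=\rho^{-1}\sum_u d_C(u)\sum_{w\in N_C(u)}x_w$. I would then bound each $x_w$ between $m=\min_C x$ and $M=\max_C x$. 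The difference of two $C$-equations gives $(\rho+1)(x_w-x_{w'})=\sum_{u\sim w}x_u-\sum_{u\sim w'}x_u$, and each such sum is at most $(b+1)\cdot\max_T x_u\le (b+1)(2a-1)M/\rho$. So $M-m\le (b+1)(2a-1)M/\rho^2$. This yields
\[ \rho-(c-1)=\frac{P(G_i)}{\rho}\,(1+\varepsilon_i),\qquad |\varepsilon_i|\le \frac{(b+1)(2a-1)}{\rho^2}\cdot O(1). \]

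\textbf{Step 2: comparison.} Set $f_i=\rho(G_i)-(c-1)$, so that $f_i\rho(G_i)\in [P(G_i)(1-\eta),\,P(G_i)(1+\eta)]$ with $\eta$ of order $(b+1)(2a-1)/c^2$. Suppose $\rho(G_1)\le\rho(G_2)$. Since $P$ is an integer, $P(G_1)\ge P(G_2)+1$, and $P(G_2)\le (b+1)(2a-1)^2$. Then
\[ P(G_1)(1-\eta)\le f_1\rho_1\le f_2\rho_2\le P(G_2)(1+\eta), \]
which forces $1\le P(G_1)-P(G_2)\le 2\eta\,(b+1)(2a-1)^2$. The hypothesis $c\ge (b+1)^2(2a-1)^2+b+2$ is exactly what should make this right-hand side less than $1$. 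The monotonicity of $t\mapsto t(t-(c-1))$ for $t\ge c-1$ is used here to pass from $\rho_1\le\rho_2$ to $f_1\rho_1\le f_2\rho_2$.

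\textbf{Main obstacle.} The hard part is getting error constants sharp enough to match the stated threshold on $c$. In particular, the edges inside $T_i$, if any, feed second-order contributions into $x_u$, and the boundary estimate for $\rho$ must be tight. I would first try the cruder but cleaner route: use $\rho\ge c-1$, and bound $\sum_{u}d_C(u)\sum_{w\in N_C(u)}x_w$ between $P\,m$ and $P\,M$. Then $M/m\le 1+(b+1)(2a-1)/(c-1)^2$ and $P\le (b+1)(2a-1)^2$, and I would check that the product of these error terms stays below $1/2$ under the stated bound on $c$.
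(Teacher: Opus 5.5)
Your argument works only when $T_i$ is independent, and that restriction is not harmless. The lemma allows edges inside $T_i$, and the paper needs that case. In Corollary 3.1 (and again in Lemma 4.1) the lemma is applied to an arbitrary extremal $G$ with $P(G)<P(B_{n,2}^{a,b})$ (resp.\ $P(B_{n,1}^{a,b})$), and the conclusion $E(G[T])=\emptyset$ is obtained only through it. So your remark that ``the intended use has $T_i$ independent'' is mistaken. The identity $x_u=\rho^{-1}\sum_{w\in N_C(u)}x_w$, on which Step 1, the bound $\max_T x_u\le (2a-1)M/\rho$ and your ``cruder route'' all rest, is false in exactly the case that matters.

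The $T$-edge terms are also not ``lower order'' in the relevant sense. With $\rho x_u=\sum_{w\in N_C(u)}x_w+\sum_{u'\in N_T(u)}x_{u'}$, the extra contribution to $Y_{\text{edge}}$ is about $\rho^{-2}M\sum_{u,u'\in T}\boldsymbol{1}_{uu'}d_C(u)d_C(u')\le bPM/\rho^2$. That is a relative error of order $b/\rho$. The fluctuation of $x$ on $C$, which you do control, only gives relative error of order $(b+1)(2a-1)/\rho^2$. It is the $b/\rho$ term that forces $\rho\gtrsim b\,P(G_2)\approx b(b+1)(2a-1)^2$, i.e.\ the hypothesis on $c$. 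For independent $T_i$ your argument would go through under a far weaker threshold, so the ``main obstacle'' you name is the wrong one.

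To close the gap you need an explicit upper bound of the form $c\rho(\rho-c+1)\le P\kappa\,(1+b/\rho+O(\rho^{-2}))$ with $\kappa=M/m$. The lower bound $c\rho(\rho-c+1)\ge P/\kappa$ survives, since the extra terms are nonnegative. You would then check that $P(G_2)\,[\kappa^2(1+b/\rho+\cdots)-1]<1$ when $c-1\ge (b+1)^2(2a-1)^2+b+1$. This is doable, but it is the real content of the lemma.

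The paper avoids this bookkeeping with an exact series. It sets $m_r(x,y)=\boldsymbol{1}_{xy}+|N_C(x)\cap N_C(y)|/(r+1)$ and shows $\rho-c+1=F_G(\rho)$ for a strictly decreasing $F_G$. Using only $\sum_y m_r(x,y)<b+1$, it gets $P/(r(r+1))\le F_G(r)\le P/((r+1)(r-b-1))$, which handles $T$-edges and common $C$-neighbours at once.

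Separately, your displayed formula is off by a factor of order $c$. Since $X\approx c\beta$, one gets $\rho-(c-1)\approx P/(c\rho)$, equivalently about $P/(\rho(\rho+1))$, not $P/\rho$. Because $c$ is common to $G_1$ and $G_2$ this does not break Step 2, but the statement as written is false.
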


\begin{proof}
Suppose that $G$ is a connected graph with a partition $V(G)=C\cup T$, where $G[C]=K_c$, $|T|=b+1$, $d_{C}(x)\leq 2a-1$ for every $x\in T$ and write $P(G)=\sum_{x\in T}d_C(x)^2$. For $r\geq c-1$ and $x,y\in T$, define $$m_r(x,y)=\boldsymbol{1}_{xy}+\frac{|N_{C}(x)\cap N_{C}(y)|}{r+1},$$
where $\boldsymbol{1}_{xy}=1$ if $xy\in E(G[T])$, otherwise $\boldsymbol{1}_{xy}=0$. For every $x\in T$, we have
\begin{eqnarray}
\sum_{y\in T}m_r(x,y)&=&d_T(x)+\frac{1}{r+1}\sum_{y\in T}|N_{C}(x)\cap N_{C}(y)|=d_T(x)+\frac{1}{r+1}\sum_{v\in N_{C}(x)}|N_T(v)|\nonumber\\
&\leq& b+\frac{(b+1)d_C(x)}{r+1}\leq b+\frac{(b+1)(2a-1)}{c}<b+1.\label{equation2.3}
\end{eqnarray}
For $k\geq 1$, by the inequality $2xy\leq x^2+y^2$ $(x,y\geq 0)$, the symmetry of $m_r$ and \eqref{equation2.3},
\begin{eqnarray}
&&\sum_{u_0, u_1, \dots,u_k\in T}d_C(u_0)d_C(u_k)\prod_{j=1}^{k}m_r(u_{j-1},u_j)\nonumber\\&\leq& \frac{1}{2}\sum_{u_0, u_1, \dots,u_k\in T}\left(d_C(u_0)^2+d_C(u_k)^2\right)\prod_{j=1}^{k}m_r(u_{j-1},u_j)\nonumber\\
&=&\frac{1}{2}\sum_{u_0\in T}d_C(u_0)^2 \left(\sum_{u_1, \dots,u_k\in T}\prod_{j=1}^{k}m_r(u_{j-1},u_j)\right)+\frac{1}{2}\sum_{u_k\in T}d_C(u_k)^2 \left(\sum_{u_0, \dots,u_{k-1}\in T}\prod_{j=1}^{k}m_r(u_{j-1},u_j)\right)\nonumber\\
&=&\sum_{u_0\in T}d_C(u_0)^2\left(\sum_{u_1, \dots, u_k\in T}\prod_{j=1}^{k}m_r(u_{j-1},u_j)\right)\nonumber\\
&=&\sum_{u_0\in T}d_C(u_0)^2\left(\sum_{u_1, \dots,u_{k-1}\in T}\prod_{j=1}^{k-1}m_r(u_{j-1},u_j)\sum_{u_k\in T}m_r(u_{k-1},u_k)\right)\nonumber\\
&\leq& (b+1)\sum_{u_0\in T}d_C(u_0)^2\left(\sum_{u_1, \dots,u_{k-1}\in T}\prod_{j=1}^{k-1}m_r(u_{j-1},u_j)\right)\leq
\cdots \leq(b+1)^kP(G).\label{equation2.4}
\end{eqnarray}
Next we define $\prod_{j=1}^0m_r(u_{j-1},u_j)=1$ and
\begin{eqnarray}\label{equation2.5}
F_G(r)=\frac{1}{r+1}\sum_{k=0}^{\infty}\frac{1}{r^{k+1}}\sum_{u_0,u_1,\dots,u_k\in T}d_C(u_0)d_C(u_k)\prod_{j=1}^km_r(u_{j-1},u_j).
\end{eqnarray}
Since $G$ is connected and $T\neq \emptyset$, we have $P(G)>0$. Clearly, $F_G(r)$ is strictly decreasing on $[c-1,\infty)$ because $m_r(x,y)$ is nonincreasing in $r$ and the term corresponding to $k=0$ in \eqref{equation2.5} is $\frac{P(G)}{r(r+1)}$, which is strictly decreasing.

Let $\rho=\rho(G)$ and $\boldsymbol{x}=(x_u)_{u\in V(G)}$ be the positive Perron vector of $A(G)$ such that $\sum_{v\in C}x_v=1$. Since $K_c$ is a subgraph of $G$, we have $\rho\geq c-1>b+1$. By $A(G)\boldsymbol{x}=\rho\boldsymbol{x}$, for $u\in C$ we have $$(\rho+1)x_u=\sum_{v\in C}x_v+\sum_{w\in N_T(u)}x_w=1+\sum_{w\in N_T(u)}x_w.$$ Summing over $u\in C$, we can obtain that \begin{eqnarray}\label{equation2.6}
\rho-c+1=\sum_{u\in C}\sum_{w\in N_T(u)}x_w=\sum_{w\in T}d_C(w)x_w.
\end{eqnarray}
For a fixed vertex $v\in T$, summing over $u\in N_C(v)$, we have $$(\rho+1)\sum_{u\in N_C(v)}x_u=d_C(v)+\sum_{u\in N_C(v)}\sum_{w\in N_T(u)}x_w=d_C(v)+\sum_{w\in T}|N_C(v)\cap N_C(w)|x_w.$$
Together with $A(G)\boldsymbol{x}=\rho\boldsymbol{x}$, this gives
\begin{eqnarray}
\rho x_v&=&\sum_{u\in N_C(v)}x_u+\sum_{u\in N_T(v)}x_u=\frac{d_C(v)}{\rho+1}+\sum_{w\in T}\frac{|N_C(v)\cap N_C(w)|x_w}{\rho+1}+\sum_{w\in T}\boldsymbol{1}_{vw}x_w\nonumber\\
&=&\frac{d_C(v)}{\rho+1}+\sum_{w\in T}m_{\rho}(v,w)x_w.\label{equation2.7}
\end{eqnarray}
Fix $u_0\in T$. Repeatedly substituting \eqref{equation2.7}, for every $N\geq 0$,
\begin{eqnarray*}
x_{u_0}&=&\frac{d_C(u_0)}{\rho(\rho+1)}+\frac{1}{\rho}\sum_{u_1\in T}m_{\rho}(u_0,u_1)x_{u_1}\\
&=&\frac{d_C(u_0)}{\rho(\rho+1)}+\frac{1}{\rho}\sum_{u_1\in T}m_{\rho}(u_0,u_1)\left[\frac{d_C(u_1)}{\rho(\rho+1)}+\frac{1}{\rho}\sum_{u_2\in T}m_{\rho}(u_1,u_2)x_{u_2}\right]\\
&=&\frac{d_C(u_0)}{\rho(\rho+1)}+\frac{1}{\rho^2(\rho+1)}\sum_{u_1\in T}d_C(u_1)m_{\rho}(u_0,u_1)+\frac{1}{\rho^2}\sum_{u_1,u_2\in T}m_{\rho}(u_0,u_1)m_{\rho}(u_1,u_2)x_{u_2}\\
&=&\cdots=\frac{1}{\rho+1}\sum_{k=0}^{N}\frac{1}{\rho^{k+1}}\sum_{u_1,\dots,u_k\in T}d_C(u_k)\prod_{j=1}^{k}m_{\rho}(u_{j-1},u_j)+R_N(u_0),
\end{eqnarray*}
where the remainder is $$R_N(u_0)=\frac{1}{\rho^{N+1}}\sum_{u_1,\dots,u_{N+1}\in T}\left(\prod_{j=1}^{N+1}m_{\rho}(u_{j-1},u_j)\right)x_{u_{N+1}}.$$
By \eqref{equation2.3} and $\rho>b+1$, we have
$$0\leq R_{N}(u_0)\leq \frac{\max_{u\in T}x_u}{\rho^{N+1}}\sum_{u_1,\dots,u_{N+1}\in T}\prod_{j=1}^{N+1}m_{\rho}(u_{j-1},u_j)\leq \max_{u\in T}x_u\left(\frac{b+1}{\rho}\right)^{N+1}\to 0, $$
and hence $\lim_{N \to \infty}R_N(u_0)=0.$ Therefore $$x_{u_0}=\frac{1}{\rho+1}\sum_{k=0}^{\infty}\frac{1}{\rho^{k+1}}\sum_{u_1,\dots,u_k\in T}d_C(u_k)\prod_{j=1}^{k}m_{\rho}(u_{j-1},u_j).$$ Multiplying by $d_C(u_0)$, summing over $u_0\in T$, and applying \eqref{equation2.5} and \eqref{equation2.6}, we obtain
\begin{eqnarray*}
\rho-c+1=F_G(\rho).
\end{eqnarray*}
Since $F_G(r)$ is strictly decreasing on $[c-1,\infty)$ and $f(r)=r-c+1$ is strictly increasing on $[c-1,\infty)$, we can deduce that $\rho$ is the unique solution of $r-c+1=F_G(r)$ in $[c-1,\infty)$.

By \eqref{equation2.4} and the nonnegativity of all summands,
\begin{eqnarray}\label{equation2.8}
\frac{P(G)}{r(r+1)}\leq F_G(r)\leq \frac{P(G)}{(r+1)(r-b-1)}.
\end{eqnarray}
Suppose $P(G_1)>P(G_2)$. Since $P(G_1)$ and $P(G_2)$ are integers, we have $P(G_1)-P(G_2)\geq 1$. For every $r\geq c-1$, it follows from \eqref{equation2.8} that \begin{eqnarray*}
F_{G_1}(r)-F_{G_2}(r)&\geq& \frac{P(G_1)}{r(r+1)}-\frac{P(G_2)}{(r+1)(r-b-1)}=\frac{r[P(G_1)-P(G_2)]-(b+1)P(G_1)}{r(r+1)(r-b-1)}\\
&\geq&\frac{r-(b+1)P(G_1)}{r(r+1)(r-b-1)}.
\end{eqnarray*}
By $P(G_1)\leq (b+1)(2a-1)^2$ and $r\geq c-1\geq (b+1)^2(2a-1)^2+b+1$, we can deduce that $F_{G_1}(r)>F_{G_2}(r)$ for every $r\geq c-1$. In particular, $$F_{G_1}(\rho(G_2))>F_{G_2}(\rho(G_2))=\rho(G_2)-c+1.$$
Since $F_{G_1}(r)$ is strictly decreasing whereas $f(r)=r-c+1$ is strictly increasing, their intersection lies to the right of $\rho(G_2)$. Hence $\rho(G_1)>\rho(G_2)$.
\end{proof}

\begin{lem}\label{lemma2.8}
Let $G$ be a finite graph, $b$ be a positive integer and $T,R\subseteq V(G)$ be disjoint vertex sets. Suppose $T=\{u_1,\dots,u_{b+1}\}$, $L=T\setminus \{u_1\}$ and $|N_R(v)|=1$ for every $v\in L$. Then
\begin{eqnarray}\label{equation2.9}
\max_{\emptyset \neq X\subseteq T}(|X|-|N_R(X)|)=\max\{b-|N_R(L)|,b+1-|N_R(T)|\}.
\end{eqnarray}
\end{lem}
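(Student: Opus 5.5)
The plan is to prove the two inequalities separately, splitting every nonempty $X\subseteq T$ according to whether $u_1\in X$. The lower bound is immediate. Take $X=L$ (nonempty, since $|L|=b\geq 1$) to get $|X|-|N_R(X)|=b-|N_R(L)|$. Take $X=T$ to get $b+1-|N_R(T)|$. So the left side of \eqref{equation2.9} is at least the right side.

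For the upper bound, fix a nonempty $X\subseteq T$ and write $X'=X\setminus\{u_1\}\subseteq L$. The key structural fact is that every $v\in L$ has exactly one neighbour in $R$. Hence $N_R(X')=\bigcup_{v\in X'}N_R(v)$ is a union of singletons, and the map $v\mapsto$ its unique $R$-neighbour sends $X'$ onto $N_R(X')$. Therefore, for $L\setminus X'$, adding these $|L\setminus X'|$ vertices back to $X'$ enlarges the neighbourhood by at most $|L\setminus X'|$ new vertices, that is,
\begin{equation*}
|N_R(X'\cup (L\setminus X'))|\leq |N_R(X')|+|L\setminus X'|.
\end{equation*}
More generally, for any $Y\subseteq T$ and $v\in L\setminus Y$ we have $|N_R(Y\cup\{v\})|\leq |N_R(Y)|+1$. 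In words, the function $Y\mapsto |Y|-|N_R(Y)|$ is nondecreasing when we add vertices of $L$.

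We then apply this monotonicity in two cases.

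\textbf{Case $u_1\notin X$.} Then $X\subseteq L$. Adding the vertices of $L\setminus X$ one at a time gives $|X|-|N_R(X)|\leq |L|-|N_R(L)|=b-|N_R(L)|$.

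\textbf{Case $u_1\in X$.} Adding the vertices of $T\setminus X\subseteq L$ one at a time gives $|X|-|N_R(X)|\leq |T|-|N_R(T)|=b+1-|N_R(T)|$.

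In both cases $|X|-|N_R(X)|$ is at most the right side of \eqref{equation2.9}. Combined with the lower bound, this proves equality.

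There is no real obstacle here. The only point needing care is the one-step increment bound $|N_R(Y\cup\{v\})|\leq|N_R(Y)|+1$ for $v\in L$, which holds because $N_R(Y\cup\{v\})=N_R(Y)\cup N_R(v)$ and $|N_R(v)|=1$. No property of $u_1$ is used, so its neighbourhood in $R$ may be arbitrary.
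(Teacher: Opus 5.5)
Your proof is correct and follows the paper's argument. It splits on whether $u_1\in X$, uses the fact that each vertex of $L$ outside $X$ adds at most one vertex to the $R$-neighbourhood, and attains the two bounds with $X=L$ and $X=T$; your one-vertex-at-a-time monotonicity is just a stepwise form of the paper's inequalities $|N_R(X)|\geq |N_R(L)|-(b-|X|)$ and $|N_R(X)|\geq |N_R(T)|-(b+1-|X|)$.
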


\begin{proof}
For any $\emptyset\neq X\subseteq T$ with $u_1\notin X$, $|N_R(X)|\geq |N_R(L)|-(b-|X|),$ implying $$|X|-|N_R(X)|\leq b-|N_R(L)|,$$
with equality when $X=L$. For any $\emptyset\neq X\subseteq T$ with $u_1\in X$, $|N_R(X)|\geq |N_R(T)|-(b+1-|X|),$ and then
$$|X|-|N_R(X)|\leq b+1-|N_R(T)|,$$
with equality when $X=T$. Hence \eqref{equation2.9} holds.
\end{proof}

\begin{lem}\label{lemma2.9}
Let $G$ have a vertex partition $V(G)=C\cup T$, where $G[C]$ is a clique and $G[T]=(b+1)K_1$ ($b>0$). Suppose that $\emptyset\neq S\subseteq C$, every vertex of $S$ is adjacent to every vertex of $T$, and $|C|\geq |T|+2$. Let $\rho=\rho(G)$ and $\boldsymbol{x}$ be a positive Perron vector of $G$. Then for every $w\in C$,
\begin{eqnarray}\label{equation2.10}
\frac{\sum_{v\in C}x_v}{\rho+1}\leq x_w< \frac{2\sum_{v\in C}x_v}{\rho+1}.
\end{eqnarray}
If $u,v\in T$ satisfy $d_{C\setminus S}(u)\geq 2$ and $d_{C\setminus S}(v)=1$, then $x_u>x_v$. If
$u,v\in T$ satisfy $d_{C\setminus S}(u)=|S|$ and $d_{C\setminus S}(v)=1$, then $x_u<2x_v$.
\end{lem}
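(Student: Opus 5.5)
The plan is to work directly with the eigenvalue equations of $A(G)$ on $C$ and on $T$. Write $\Sigma=\sum_{v\in C}x_v$.

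\textbf{Bounds on $x_w$ for $w\in C$.} Since $G[C]$ is a clique, for $w\in C$ the eigen-equation reads
\[(\rho+1)x_w=\Sigma+\sum_{y\in N_T(w)}x_y.\]
The lower bound $x_w\geq \Sigma/(\rho+1)$ is then immediate. For the upper bound it suffices to show $\sum_{y\in N_T(w)}x_y<\Sigma$. For each $y\in T$, the set $N_G(y)$ lies in $C$, so $\rho x_y=\sum_{v\in N_C(y)}x_v\leq \Sigma$, which gives $x_y\leq \Sigma/\rho$. Hence
\[\sum_{y\in N_T(w)}x_y\leq \frac{(b+1)\Sigma}{\rho}.\]
Because $K_{|C|}\subseteq G$, Lemma \ref{lemma2.1} (or simply monotonicity) gives $\rho\geq |C|-1\geq |T|+1=b+2>b+1$. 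Therefore $\sum_{y\in N_T(w)}x_y<\Sigma$, which yields the strict upper bound in \eqref{equation2.10}.

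\textbf{Comparing $x_u$ and $x_v$ for $u,v\in T$.} Every vertex of $S$ is adjacent to all of $T$, so for $y\in T$,
\[\rho x_y=\sum_{s\in S}x_s+\sum_{z\in N_{C\setminus S}(y)}x_z.\]
Put $\sigma=\sum_{s\in S}x_s$. Note that $\sigma\geq |S|\Sigma/(\rho+1)$ by the lower bound on $C$, and that every term $x_z$ lies in $[\Sigma/(\rho+1),\,2\Sigma/(\rho+1))$.

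For the first comparison, assume $d_{C\setminus S}(u)\geq 2$ and $d_{C\setminus S}(v)=1$. Then
\[\rho x_u\geq \sigma+\frac{2\Sigma}{\rho+1}>\sigma+x_{z_v}=\rho x_v,\]
where $z_v$ is the unique neighbor of $v$ in $C\setminus S$. The strict inequality is exactly the strict upper bound in \eqref{equation2.10}.

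For the second comparison, assume $d_{C\setminus S}(u)=|S|$ and $d_{C\setminus S}(v)=1$. On one side,
\[\rho x_u<\sigma+\frac{2|S|\Sigma}{\rho+1}.\]
On the other side,
\[2\rho x_v\geq 2\sigma+\frac{2\Sigma}{\rho+1}\geq \sigma+\frac{(|S|+2)\Sigma}{\rho+1}.\]
These two bounds alone do not give $\rho x_u<2\rho x_v$, since $2|S|$ may exceed $|S|+2$. Instead I will compare termwise, using $\sigma$ itself to absorb the $|S|$ terms. Each term satisfies $x_z<2\Sigma/(\rho+1)\leq 2\min_{s\in S}x_s$, because every $x_s\geq \Sigma/(\rho+1)$. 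Summing over the $|S|$ neighbors of $u$ in $C\setminus S$ gives
\[\sum_{z\in N_{C\setminus S}(u)}x_z<2|S|\min_{s\in S} x_s\leq 2\sigma .\]
That yields $\rho x_u<3\sigma$, which is still too weak. Refining, I expect $\sum_{z\in N_{C\setminus S}(u)}x_z<\sigma+x_{z_v}$ to be the right target. This should follow by splitting each $x_z=\Sigma/(\rho+1)+\epsilon_z$ with $\epsilon_z=\sum_{y\in N_T(z)}x_y/(\rho+1)$. Then $\sum_z x_z=|S|\Sigma/(\rho+1)+\sum_z\epsilon_z$, while $\sigma\geq |S|\Sigma/(\rho+1)+\sum_s\epsilon_s$ and $\epsilon_s\geq \sum_{y\in T}x_y/(\rho+1)\geq \epsilon_z$, since $s$ is adjacent to all of $T$. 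So $\sum_z x_z\leq \sigma$, with strictness coming from $x_{z_v}>0$. This gives
\[\rho x_u=\sigma+\sum_z x_z\leq 2\sigma<2\sigma+2x_{z_v}=2\rho x_v.\]

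The main obstacle is this last comparison. The crude two-sided bound from \eqref{equation2.10} fails there, and one has to notice that the vertices of $S$ dominate every vertex of $C$ in Perron weight, because each of them sees all of $T$.
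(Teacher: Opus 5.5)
Your proof is correct and follows the paper's argument step for step. You use the same eigen-equation bounds on $C$ via $\rho>b+1$, the same $2\min-\max$ comparison for the first claim on $T$, and for the last claim the same domination $x_s\ge x_z$ for $s\in S$, $z\in C$ (your $\epsilon_s\ge\epsilon_z$), giving $\rho x_u\le 2\sigma<2\rho x_v$; in a final write-up you can drop the exploratory false starts in the third part and go straight to that argument.
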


\begin{proof}
Since $S\neq \emptyset$ and every vertex $v\in S$ is adjacent to every vertex of $V(G)\setminus \{v\}$, the graph is connected. Note that $K_{|C|}$ is a subgraph of $G$. By Lemma \ref{lemma2.1}, $|C|\geq |T|+2$ and $|T|=b+1$, we have $\rho>\rho(K_{|C|})=|C|-1>b+1$. By $A(G)\boldsymbol{x}=\rho\boldsymbol{x}$, $\rho x_u\leq \sum_{v\in C}x_v$ for $u\in T$, and then $$\sum_{u\in T}x_u\leq (b+1)\frac{\sum_{v\in C}x_v}{\rho}<\sum_{v\in C}x_v.$$ Since $(\rho+1)x_w=\sum_{v\in C}x_v+\sum_{u\in N_T(w)}x_u$ for every $w\in C$, we have $$\frac{\sum_{v\in C}x_v}{\rho+1}\leq x_w= \frac{\sum_{v\in C}x_v}{\rho+1}+\frac{\sum_{u\in N_T(w)}x_u}{\rho+1}<\frac{2\sum_{v\in C}x_v}{\rho+1}.$$

Suppose that $u,v\in T$ satisfy $d_{C\setminus S}(u)\geq 2$ and $d_{C\setminus S}(v)=1$. By $A(G)\boldsymbol{x}=\rho\boldsymbol{x}$ and \eqref{equation2.10},
$$\rho(x_u-x_v)=\sum_{w\in N_{C\setminus S}(u)}x_w-\sum_{w\in N_{C\setminus S}(v)}x_w\geq 2\min_{w\in C}x_w-\max_{w\in C}x_w>0,$$ which implies that $x_u>x_v$.

Assume that $u,v\in T$ satisfy $d_{C\setminus S}(u)= |S|$ and $d_{C\setminus S}(v)=1$. By $A(G)\boldsymbol{x}=\rho\boldsymbol{x}$, for any $w_1\in S$ and $w_2\in C\setminus S$, we have
$(\rho+1)(x_{w_1}-x_{w_2})=\sum_{w\in T}x_w-\sum_{w\in N_T(w_2)}x_w\geq 0$. Hence $x_{w_1}\geq x_{w_2}$ for any $w_1\in S$ and $w_2\in C\setminus S$. Since $d_{C\setminus S}(u)=|S|$ and $\rho x_v>\sum_{w\in S}x_w$, it follows that
$$\rho x_u=\sum_{w\in S}x_w+\sum_{w\in N_{C\setminus S}(u)}x_w\leq 2\sum_{w\in S}x_w<2\rho x_v.$$ Therefore $x_u<2x_v$.
\end{proof}

Let $a,b,n$, and $m$ be integers satisfying $3\leq a<b$, $n\geq (b+1)^2(2a-1)^2+2b+3$, and $m\in\{b-a+1,b-a+2\}$. Consider the graph $$K_{a-1}\vee(K_{n-a-b}\cup (b+1)K_1),$$ where $S$, $R$, and $T=\{u_1,\dots,u_{b+1}\}$ are the vertex sets corresponding to $K_{a-1}$, $K_{n-a-b}$ and $(b+1)K_1$, respectively. Let $L=T\setminus \{u_1\}$. Define $\mathcal{G}_n^{a,b}(m)$ to be the family of graphs obtained from this graph by adding edges between $R$ and $T$ such that $d_R(u_1)=a-1$, $d_R(v)=1$ for every $v\in L$, $|N_R(L)|\geq m$, and $|N_R(T)|\geq m+1$.

\begin{lem}\label{lemma2.10}
Let $G^*\in \mathcal{G}_n^{a,b}(m)$ have maximum spectral radius, and let $V(G^*)=S\cup R\cup \{u_1\}\cup L$. Then $|N_R(L)|=m$, and there is a unique $w\in N_R(u_1)\cap N_R(L)$ such that $d_L(w)=b-m+1$. Every other vertex of $N_R(L)$ has exactly one neighbor in $L$, and $|N_R(u_1)\cap N_R(L)|=\min\{m,a-2\}$. If $m=b-a+1$, then $G^*\cong B_{n,2}^{a,b}$. If $m=b-a+2$, then $G^*\cong {T}_{n,2}^{a,b}$.
\end{lem}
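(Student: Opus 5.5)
The plan is to reduce everything to maximizing the quantity $P(G)=\sum_{x\in T}d_C(x)^2$ from Lemma \ref{lemma2.7}, where $C=S\cup R$ is the clique. Every $G\in\mathcal{G}_n^{a,b}(m)$ has the partition required there, with $|C|=n-b-1\geq (b+1)^2(2a-1)^2+b+2$ and $|T|=b+1$. We have $d_C(u_1)=(a-1)+(a-1)=2a-2$, and every $v\in L$ has $d_C(v)=a\leq 2a-1$. Hence $P(G)$ is the same constant for all members, so Lemma \ref{lemma2.7} alone cannot separate them. Instead I would work directly with the Perron vector $\boldsymbol{x}$ of $G^*$ and use Lemma \ref{lemma2.3} edge-moves that stay inside the family. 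In $G^*$ every vertex of $L$ has exactly one $R$-neighbour, and $u_1$ has $a-1\geq 2$ of them. Lemma \ref{lemma2.9} then gives $x_{u_1}>x_v$ for all $v\in L$.

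\textbf{Step 1 ($|N_R(L)|=m$ and the multiplicities).} Suppose $|N_R(L)|>m$. Pick $w,w'\in N_R(L)$ with $x_w\geq x_{w'}$ and $v\in L$ with $N_R(v)=\{w'\}$. Move the edge $vw'$ to $vw$, which raises $\rho$ by Lemma \ref{lemma2.3} (applied to the vertex pair $w,w'$ and $S=\{v\}$). We must choose the pair so that the constraints $|N_R(L)|\geq m$ and $|N_R(T)|\geq m+1$ survive. If $w'$ has a unique $L$-neighbour and $w'\notin N_R(u_1)$, then $|N_R(L)|$ drops by one and $|N_R(T)|$ drops by at most one. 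Since $|N_R(L)|>m$ this is fine unless $|N_R(T)|=m+1$, and that case forces $N_R(L)\supseteq N_R(T)\setminus\{\text{one vertex}\}$, which can be handled by choosing $w'$ appropriately. Iterating shows $|N_R(L)|=m$ in any maximizer.

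Next, the vertices of $R$ split by the eigen-equation $(\rho+1)x_w=\sum_{C}x+\sum_{N_T(w)}x$. So $x_w$ is increasing in the weight of $T$-neighbours of $w$, and a vertex in $N_R(u_1)$ gets the extra $x_{u_1}$. The same Lemma \ref{lemma2.3} comparison then yields concentration. Among $N_R(L)$, all surplus $L$-edges should go to one vertex of maximal $x$-value, leaving the other $m-1$ with one $L$-neighbour each. This gives $d_L(w)=b-(m-1)=b-m+1$.

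\textbf{Step 2 (overlap with $N_R(u_1)$).} We want $|N_R(u_1)\cap N_R(L)|$ as large as possible, subject to $|N_R(T)|\geq m+1$. This means $u_1$ must have at least one neighbour outside $N_R(L)$, so the overlap is at most $\min\{m,a-2\}$. If the overlap is smaller, move an edge $u_1y$ with $y\notin N_R(L)$ to $u_1z$ with $z\in N_R(L)\setminus N_R(u_1)$. Here $x_z\geq x_y$ since $z$ has extra $T$-neighbours, and $u_1$ still keeps an outside neighbour. Lemma \ref{lemma2.3} increases $\rho$. Finally the heavy vertex $w$ must lie in $N_R(u_1)$, since otherwise exchanging roles strictly increases the relevant Perron entries. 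That gives uniqueness of $w\in N_R(u_1)\cap N_R(L)$.

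\textbf{Step 3 (identification).} Once the structure is pinned down as in Steps 1 and 2, it is unique up to relabelling. With $m=b-a+1$, compare it with the definition of $B_{n,2}^{a,b}$ for $a\geq 3$: $u_1$ sees $w_1,\dots,w_{a-1}$, $w_1$ is the heavy vertex, and the remaining $L$-vertices go to distinct $w$'s, partly inside and partly outside $N_R(u_1)$. Check that the counts agree; the $m=b-a+2$ case similarly matches $T_{n,2}^{a,b}$.

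The main obstacle is Step 1's bookkeeping: each Lemma \ref{lemma2.3} move must preserve the constraint $|N_R(T)|\geq m+1$. We also need a strict inequality on Perron entries in the tie cases, when two candidate $R$-vertices have equal $T$-neighbourhood weight. For those I would use symmetry, i.e. an automorphism swapping them, so that either choice gives the same graph.
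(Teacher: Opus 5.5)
Your overall strategy matches the paper's: Lemma \ref{lemma2.3} moves that stay inside $\mathcal{G}_n^{a,b}(m)$, concentrating the surplus $L$-edges on a vertex $w_p$ of maximal entry in $N_R(L)$, then maximizing the overlap. Step 1 is essentially fine. In the case $|N_R(T)|=m+1$ one actually has $N_R(L)=N_R(T)$, and the right choice is $w'\in N_R(u_1)\setminus\{w_p\}$, which exists because $a-1\ge 2$.

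\textbf{Step 2 uses a Perron inequality in the wrong direction.} Take $y\in N_R(u_1)\setminus N_R(L)$ and $z\in N_R(L)\setminus N_R(u_1)$. The eigen-equation gives $(\rho+1)(x_z-x_y)=\sum_{v\in N_L(z)}x_v-x_{u_1}$. Once $w_p\in N_R(u_1)$, every such $z$ differs from $w_p$ and so has $d_L(z)=1$, say $N_L(z)=\{u_i\}$. The difference is then $x_{u_i}-x_{u_1}<0$, by the very consequence of Lemma \ref{lemma2.9} you recorded. So $x_z<x_y$, and Lemma \ref{lemma2.3} does not license moving $u_1$ from $y$ to $z$. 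The claim that ``$z$ has extra $T$-neighbours'' is false: $y$'s single $T$-neighbour $u_1$ outweighs $z$'s single neighbour.

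The repair is the reverse move, as in the paper. Detach $u_i$ from $z$ and attach it to $y$, which is valid since $x_y>x_z$. This keeps $|N_R(L)|=m$ and raises the overlap by one. It lowers $|N_R(T)|=m+(a-1)-|N_R(u_1)\cap N_R(L)|$ by one, which is harmless because an overlap below $\min\{m,a-2\}$ forces $|N_R(T)|\ge m+2$. Your intended graph is isomorphic to this one by swapping $y$ and $z$, so your conclusion is right but your justification is not. Ties are never a problem: Lemma \ref{lemma2.3} only needs $x_i\ge x_j$ and still gives a strict increase.

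\textbf{The claim $w_p\in N_R(u_1)$ is not proved.} ``Exchanging roles strictly increases the relevant Perron entries'' is not an argument. This step must also come before the overlap step; otherwise $z=w_p$ has to be treated separately.
\begin{itemize}
\item If $N_R(u_1)\cap N_R(L)\neq\emptyset$, move $u_1$'s edge from a vertex of the intersection to $w_p$, using the maximality of $x_{w_p}$ on $N_R(L)$.
\item If the intersection is empty, you must compare $w_p$ with some $w_l\in N_R(u_1)$. Here $(\rho+1)(x_{w_p}-x_{w_l})=(b-m+1)x_{u_i}-x_{u_1}$. Positivity needs both $b-m+1\ge 2$ (true since $a\ge 3$) and the third assertion of Lemma \ref{lemma2.9}, $x_{u_1}<2x_{u_i}$. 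That assertion applies because $d_R(u_1)=a-1=|S|$ and $d_R(u_i)=1$.
\end{itemize}
Your proposal never invokes this quantitative bound. An alternative that avoids it: whichever of $x_{w_p},x_{w_l}$ is larger, one of the two moves is licensed by Lemma \ref{lemma2.3}, namely $u_1w_l\to u_1w_p$ or moving all of $N_L(w_p)$ onto $w_l$. The two resulting graphs are isomorphic, and both keep $|N_R(L)|=m$ and $|N_R(T)|=m+a-2\ge m+1$. With these two points repaired, your Step 3 identification goes through.
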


\begin{proof}
Recall the definitions of $B_{n,2}^{a,b}$ and ${T}_{n,2}^{a,b}$. It is easy to check that $B_{n,2}^{a,b}\in \mathcal{G}_n^{a,b}(b-a+1)$ and ${T}_{n,2}^{a,b}\in \mathcal{G}_n^{a,b}(b-a+2)$ when $3\leq a<b$. Since $S\neq \emptyset$ and every vertex $v\in S$ is adjacent to every vertex of $V(G^*)\setminus \{v\}$, $G^*$ is connected. Let $\rho=\rho(G^*)$ and $\boldsymbol{x}$ be a positive Perron vector of $G^*$. Suppose that $w_p\in N_{R}(L)$ satisfies $x_{w_p}=\max_{w\in N_{R}(L)}x_w$.

Now we prove that $d_L(w)=1$ for every vertex $w\in N_{R}(L)\setminus \{w_p\}$. Suppose to the contrary that there exists $w_j\in N_{R}(L)\setminus \{w_p\}$ such that $d_L(w_j)\geq 2$. Let $N_{R}(u_i)=\{w_j\}$ where $u_i\in L$. Construct $G'=G^*-u_iw_j+u_iw_p$. Observe that $G'\in \mathcal{G}_n^{a,b}(m)$. By $x_{w_p}\geq x_{w_j}$ and Lemma \ref{lemma2.3}, $\rho<\rho(G')$, which contradicts the maximality of $\rho$. Hence for every vertex $w\in N_{R}(L)\setminus \{w_p\}$, we have
\begin{eqnarray}\label{equation2.11}
d_L(w)=1.
\end{eqnarray}

Next we claim that $|N_{R}(L)|=m$. Suppose to the contrary that $|N_{R}(L)|\geq m+1$. Since $m+1\geq 2$, we can obtain that there exists a vertex $w_j\in N_{R}(L)\setminus \{w_p\}$. By \eqref{equation2.11}, there is a unique vertex $u_i\in L$ such that $N_R(u_i)=\{w_j\}$. Let $G'=G^*-u_iw_j+u_iw_p$. If $|N_R(\{u_1\}\cup L)|\geq m+2$, then $G'\in \mathcal{G}_n^{a,b}(m)$. By $x_{w_p}\geq x_{w_j}$ and Lemma \ref{lemma2.3}, $\rho<\rho(G')$, contrary to the maximality of $\rho$. If $|N_R(\{u_1\}\cup L)|=m+1$, then $|N_{R}(L)|=m+1$ and $N_R(u_1)\subseteq N_{R}(L)$. Since $|S|=a-1\geq 2$, there exists $w_j\in N_R(u_1)\setminus \{w_p\}\subseteq N_R(L)\setminus \{w_p\}$. By \eqref{equation2.11}, there is a vertex $u_i\in L$ such that $N_L(w_j)=\{u_i\}$. Let $G'=G^*-u_iw_j+u_iw_p$. Obviously, $G'\in \mathcal{G}_n^{a,b}(m)$. Combining $x_{w_p}\geq x_{w_j}$ and Lemma \ref{lemma2.3}, $\rho<\rho(G')$, contradicting the maximality of $\rho$. Hence $|N_{R}(L)|=m$. Combining this with \eqref{equation2.11}, we have $$b=|L|=\sum_{w\in N_R(L)}d_L(w)=d_L(w_p)+m-1,$$ which implies that $d_L(w_p)=b-m+1$.

Now we assert $w_p\in N_R(u_1)$. Otherwise, suppose $w_p\notin N_R(u_1)$. If $N_R(L)\cap N_R(u_1)\neq \emptyset$, there exists $w_j\in N_R(L)\cap N_R(u_1)$. Let $G'=G^*-u_1w_j+u_1w_p$. Note that $G'\in \mathcal{G}_n^{a,b}(m)$. According to $x_{w_p}\geq x_{w_j}$ and Lemma \ref{lemma2.3}, $\rho<\rho(G')$, which contradicts the maximality of $\rho$. Assume $N_R(L)\cap N_R(u_1)=\emptyset$. Let $w_l\in N_R(u_1)$ and $N_R(u_i)=\{w_p\}$ where $u_i\in L$. By Lemma \ref{lemma2.9}, $A(G^*)\boldsymbol{x}=\rho\boldsymbol{x}$, $d_L(w_p)=b-m+1\geq 2$ and symmetry, $$(\rho+1)(x_{w_p}-x_{w_l})=(b-m+1)x_{u_i}-x_{u_1}\geq 2x_{u_i}-x_{u_1}>0,$$ implying $x_{w_p}>x_{w_l}$. Let $G'=G^*-u_1w_l+u_1w_p$. Clearly, $G'\in \mathcal{G}_n^{a,b}(m)$. By $x_{w_p}> x_{w_l}$ and Lemma \ref{lemma2.3}, $\rho<\rho(G')$, contrary to the maximality of $\rho$. Hence $w_p\in N_R(u_1)$.

Since $|N_R(L)|=m$, $|N_R(u_1)|=a-1$ and $|N_R(\{u_1\}\cup L)|\geq m+1$, we have $$|N_R(u_1)\cap N_R(L)|=|N_R(u_1)|+|N_R(L)|-|N_R(\{u_1\}\cup L)|\leq a-2.$$ Together with $|N_R(L)|=m$, this gives $|N_R(u_1)\cap N_R(L)|\leq\min\{m,a-2\}$. If $|N_R(u_1)\cap N_R(L)|<\min\{m,a-2\}$, there exist $w_j\in N_R(L)\setminus N_R(u_1) $ and $w_l\in N_R(u_1)\setminus N_R(L)$. According to \eqref{equation2.11}, there is $u_i\in L$ such that $N_{L}(w_j)=\{u_i\}$. By $A(G^*)\boldsymbol{x}=\rho\boldsymbol{x}$ and Lemma \ref{lemma2.9}, $$(\rho+1)(x_{w_l}-x_{w_j})=x_{u_1}-x_{u_i}>0,$$ and hence $x_{w_l}>x_{w_j}$. Construct $G'=G^*-u_iw_j+u_iw_l$. Note that $G'\in \mathcal{G}_n^{a,b}(m)$. By $x_{w_l}>x_{w_j}$ and Lemma \ref{lemma2.3}, $\rho<\rho(G')$, contradicting the maximality of $\rho$. Therefore $|N_R(u_1)\cap N_R(L)|=\min\{m,a-2\}$.

Combining \eqref{equation2.11}, $|N_R(L)|=m$, $w_p\in N_R(u_1)\cap N_R(L)$, and $|N_R(u_1)\cap N_R(L)|=\min\{m,a-2\}$, we can obtain that  $G^*\cong B_{n,2}^{a,b}$ when $m=b-a+1$, and $G^*\cong {T}_{n,2}^{a,b}$ when $m=b-a+2$.
\end{proof}

Let $a$, $b$, and $n$ be three positive integers such that $2\leq a<b$. Let $\mathcal{B}_n(a,b)$ be the family of $n$-vertex 1-binding graphs $G$ such that $\delta(G)\geq a$ and $G$ has no $[a,b]$-factor.

\begin{lem}\label{lemma2.11}
Let $a$, $b$, and $n$ be three positive integers such that $2\leq a<b$ and $n\geq (b+1)^2(2a-1)^2+2b+3$. Then $B_{n,2}^{a,b}\in \mathcal{B}_n(a,b)$. If $b\leq2a-2$, then $B_{n,1}^{a,b}\in \mathcal{B}_n(a,b)$.
\end{lem}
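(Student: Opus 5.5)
We must verify three properties for each graph: $\delta\ge a$, 1-binding, and no $[a,b]$-factor. In both $B_{n,1}^{a,b}$ and $B_{n,2}^{a,b}$ the vertex set splits as $C\cup T$, with $C$ a clique of size $n-b-1$, $T$ independent, and $|T|=b+1$. Since $n\ge (b+1)^2(2a-1)^2+2b+3$, we have $|C|\ge |T|+2$. So Lemma \ref{lemma2.5} applies, and the 1-binding property reduces to Hall's condition $|N_C(X)|\ge |X|$ for all $X\subseteq T$.

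\textbf{Minimum degree.} Vertices of $C$ have degree at least $|C|-1\ge a$. For $u\in T$ we read the degrees off the definitions.
In $B_{n,1}^{a,b}$, each $u$ is joined to all of $S_1$, so $d(u)\ge a$.
In $B_{n,2}^{a,b}$, each $u$ is joined to $S_2$ ($a-1$ vertices) and has at least one neighbour in $R_2$, so $d(u)\ge a$. The vertex $u_1$ has $d_{R_2}(u_1)=a-1$ (this covers $a=2$, where $d_{R_2}(u_1)=1$).

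\textbf{No $[a,b]$-factor.} We apply Corollary \ref{corollary2.1} with $S$ the set of universal-to-$T$ vertices.
For $B_{n,1}^{a,b}$ take $S=S_1$. Every $u\in T$ then satisfies $d_{G-S}(u)=d_{R_1}(u)\le 2a-b-1<a$, so the deficient set contains $T$; vertices of $R_1$ have large degree. We need $\sum_{u\in T}d_{R_1}(u)\le a(b+1)-ab-1=a-1$. Counting edges: $(2a-b-1)$ from $u_1$, plus one for each $i=1,\dots,b-a$, gives $(2a-b-1)+(b-a)=a-1$.
For $B_{n,2}^{a,b}$ take $S=S_2$. We need $\sum_{u\in T}d_{R_2}(u)\le a(b+1)-b(a-1)-1=a+b-1$. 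The sum is $(a-1)+b$ in both the $a=2$ and $a\ge3$ cases, so the inequality holds with equality.
In both graphs each $u\in T$ has $d_{G-S}(u)\le a-1<a$ (for $u_1$ in $B_{n,2}$ this is exactly $a-1$), so the deficient set is indeed $T$.

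\textbf{1-binding (Hall's condition).} This is the main bookkeeping step. Since $S_1$ or $S_2$ lies in $N_C(X)$ for every nonempty $X$, the only possible failures are large $X$.
For $B_{n,1}^{a,b}$ we have $|N_C(X)|\ge a+|N_{R_1}(X)|$. Any $X$ with $|X|\le a$ is fine. For larger $X$, the condition follows from $|N_C(T)|\ge b+1$. The neighbours of $T$ in $R_1$ are $w_1,\dots,w_{2a-b-1}$ together with the distinct singletons $w_i$ or $w_{i+1}$ for $i\le b-a$; those beyond index $2a-b-1$ are new. The resulting count $|N_{R_1}(T)|\ge \max\{2a-b-1,\,b-a+1\}$ is what we must compare against the $b+1-a$ required. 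Since $X\setminus\{u_1\}$ together with the empty-neighbourhood vertices makes any $X$ no worse than $T$, the condition is monotone and this comparison suffices.
For $B_{n,2}^{a,b}$ use Lemma \ref{lemma2.8} with $L=T\setminus\{u_1\}$, where each vertex of $L$ has exactly one neighbour in $R_2$. Since $|N_C(X)|=a-1+|N_{R_2}(X)|$, it suffices to show $\max\{b-|N_{R_2}(L)|,\,b+1-|N_{R_2}(T)|\}\le a-1$, i.e. $|N_{R_2}(L)|\ge b-a+1$ and $|N_{R_2}(T)|\ge b-a+2$.
Count from the definitions. For $a\ge3$, $N_{R_2}(L)$ consists of $w_1$ together with the indices $i-a$ for $a+2\le i\le \min\{b+1,2a-2\}$ and $i-a+1$ for $2a-1\le i\le b+1$. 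These are distinct, giving exactly $b-a+1$ vertices. Adding $u_1$'s neighbours $w_1,\dots,w_{a-1}$ contributes at least one extra vertex; check that some $w_j$ with $j\le a-1$ is missed by $L$, namely $w_{a-1}$ is skipped, or the index range is short. Hence $|N_{R_2}(T)|\ge b-a+2$.
For $a=2$, $N_{R_2}(L)=\{w_1,\dots,w_b\}$, and the conditions hold trivially.

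I expect the main obstacle to be the index verification in $B_{n,1}^{a,b}$ across the two regimes $b\le\frac32a-1$ and $b>\frac32a-1$. There, Hall's condition must also be checked for sets $X$ not containing $u_1$, and I would treat these regimes as separate cases.
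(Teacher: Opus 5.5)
There is a genuine gap, and it is in the 1-binding step for $B_{n,1}^{a,b}$. Your other checks are correct: the minimum degree, the absence of an $[a,b]$-factor via Corollary~\ref{corollary2.1} (with $S=S_1$, resp.\ $S=S_2$), and Hall's condition for $B_{n,2}^{a,b}$. For $B_{n,2}^{a,b}$ you go through Lemma~\ref{lemma2.8}, using $|N_{R_2}(L)|=b-a+1$ and $w_{a-1}\notin N_{R_2}(L)$, instead of the paper's split of $T\setminus\{u_1\}$ into $L_1,L_2,L_3$. That is valid, and it is the device the paper itself uses for $T_{n,2}^{a,b}$ in Lemma~\ref{lemma2.12}.

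For $B_{n,1}^{a,b}$, verifying $|N_{C_1}(T)|\ge b+1$ does not give Hall's condition for the other sets with $|X|>a$. The asserted monotonicity (``any $X$ is no worse than $T$'') is false. Take $a=6$, $b=7$: then $N_{R_1}(u_1)=\{w_1,\dots,w_4\}$, $N_{R_1}(u_2)=\{w_1\}$, and $u_3,\dots,u_8$ have no neighbour in $R_1$. So $|N_{C_1}(T)|-|T|=10-8=2$. But $X=\{u_2,\dots,u_8\}$ has $|X|=7>a$ and $|N_{C_1}(X)|-|X|=0$. In general, for $b<\frac{3}{2}a-1$ the set $T$ has slack $3a-2b-2>0$ while $Z\cup\{u_2\}$ is tight, so the slack at $T$ says nothing about other sets. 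Your closing remark that sets avoiding $u_1$ must also be checked concedes exactly this, but the proposal never carries that check out.

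The missing idea is Hall's condition inside $P=\{u_1,\dots,u_{b-a+1}\}$, the vertices of $T$ with nonempty $R_1$-neighbourhood.
\begin{itemize}
\item The singletons $u_2,\dots,u_{b-a+1}$ have pairwise distinct neighbours $w_1,\dots,w_{\min\{b-a,2a-b-2\}}$ and $w_{2a-b},\dots,w_{b-a+1}$.
\item $u_1$ can be assigned $w_{2a-b-1}$, which lies in $N_{R_1}(u_1)$ because $b\le 2a-2$, and which no singleton uses.
\end{itemize}
Hence $P$ has a system of distinct representatives in $R_1$, i.e.\ $|N_{R_1}(Y)|\ge |Y|$ for every $Y\subseteq P$. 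Let $Z=T\setminus P$, the $a$ vertices with empty $R_1$-neighbourhood. Then every nonempty $X\subseteq T$ satisfies
\[
|N_{C_1}(X)|=a+|N_{R_1}(X\cap P)|\ge |X\cap Z|+|X\cap P|=|X|,
\]
and Lemma~\ref{lemma2.5} finishes. This is the paper's argument. It handles $b\le\frac{3}{2}a-1$ and $b>\frac{3}{2}a-1$ at once, so the case split you anticipated is unnecessary.
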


\begin{proof}
Suppose first that $b\leq 2a-2$. We prove $B_{n,1}^{a,b}\in \mathcal{B}_n(a,b)$. Recall the definition of $B_{n,1}^{a,b}$. Let $V_1=S_1$, $V_2=T$, and $V_3=R_1$. Take $S=V_1$. Observe that $d_{B_{n,1}^{a,b}-S}(x)\geq |R_1|-1>a$ for every $x\in R_1$. Since the possible degrees of a vertex of $V_2$ in $B_{n,1}^{a,b}-S$ are $0$, $1$, and $2a-b-1\leq a-2<a$, we can obtain that $V_2=\{v\in V(B_{n,1}^{a,b})\setminus S \mid d_{B_{n,1}^{a,b}-S}(v)<a\}$. Then $$a|V_2|-\sum_{v\in V_2}d_{B_{n,1}^{a,b}-S}(v)=ab+1>ab=b|S|.$$ By Lemma \ref{lemma2.2}, $B_{n,1}^{a,b}$ has no $[a,b]$-factor. Let $P=\{u_1, u_2, \dots, u_{b-a+1}\}$ and $Z=T\setminus P$. For every nonempty $W\subseteq T$, $|N_{R_1}(W\cap P)|\geq |W\cap P|$. Since $N_{R_1}(x)=\emptyset$ for $x\in Z$ and $|W\cap Z|\leq a=|S|$, we have $$|N_{C_1}(W)|=a+|N_{R_1}(W\cap P)|\geq a+|W\cap P|\geq |W\cap Z|+|W\cap P|=|W|.$$ By Lemma \ref{lemma2.5}, we obtain that $b(B_{n,1}^{a,b})\geq1$. Note that $\delta(B_{n,1}^{a,b})=a$. Hence $B_{n,1}^{a,b}\in \mathcal{B}_n(a,b)$.

Now we prove $B_{n,2}^{a,b}\in \mathcal{B}_n(a,b)$. Recall the definition of $B_{n,2}^{a,b}$. Let $V_1'=S_2$, $V_2'=T$, and $V_3'=R_2$. Take $S=V_1'$. Note that the possible degrees of a vertex of $V_2'$ in $B_{n,2}^{a,b}-S$ are $1$ and $a-1$. Since $d_{B_{n,2}^{a,b}-S}(x)\geq |R_2|-1>a$ for every $x\in R_2$, we have $V_2'=\{v\in V(B_{n,2}^{a,b})\setminus S \mid d_{B_{n,2}^{a,b}-S}(v)<a\}$. Then $$a|V_2'|-\sum_{v\in V_2'}d_{B_{n,2}^{a,b}-S}(v)=ab-b+1>ab-b=b|S|.$$ By Lemma \ref{lemma2.2}, $B_{n,2}^{a,b}$ contains no $[a,b]$-factor. If $a=2$, we can deduce that $|N_{R_2}(W)|\geq |W|-1$ for every nonempty $W\subseteq T$. According to $|S|=1$, we have $|N_{C_2}(W)|=1+|N_{R_2}(W)|\geq|W|.$ By Lemma \ref{lemma2.5}, we obtain that $b(B_{n,2}^{a,b})\geq1$. Now assume that $a\geq 3$. Let $L_1=\{u_2, u_3,\dots, u_{a+1}\}$, $L_2=\{u_{a+2},\dots, u_{\min\{b+1,2a-2\}}\}$ and $L_3=\{u_{2a-1},\dots, u_{b+1}\}$. Let $\emptyset \neq W\subseteq T$. If $|W\cap L_1|=0$, then $|N_{R_2}(W)|\geq |W|$. If $|W\cap L_1|\geq 1$, then $|N_{R_2}(W)|\geq|W|-(|W\cap L_1|-1)\geq |W|-(a-1).$ In both cases, $|N_{C_2}(W)|=(a-1)+|N_{R_2}(W)|\geq |W|.$ By Lemma \ref{lemma2.5}, we obtain that $b(B_{n,2}^{a,b})\geq1$. Note that $\delta(B_{n,2}^{a,b})=a$. So $B_{n,2}^{a,b}\in \mathcal{B}_n(a,b)$.
\end{proof}

Let $a$, $b$, and $n$ be three positive integers such that $2\leq a<b$. Let $\mathcal{T}_n(a,b)$ be the family of $n$-vertex 1-tough graphs $G$ such that $\delta(G)\geq a$ and $G$ has no $[a,b]$-factor.

\begin{lem}\label{lemma2.12}
Let $a$, $b$, and $n$ be three positive integers such that $2\leq a<b$ and $n\geq (b+1)^2(2a-1)^2+2b+3$. Then ${T}_{n,2}^{a,b}\in \mathcal{T}_n(a,b)$. If $b\leq 2a-3$, then ${T}_{n,1}^{a,b}\in \mathcal{T}_n(a,b)$.
\end{lem}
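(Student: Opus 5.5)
We follow the template of the proof of Lemma \ref{lemma2.11}. For each of ${T}_{n,2}^{a,b}$ and ${T}_{n,1}^{a,b}$ (the latter when $b\le 2a-3$) we check three things: $\delta=a$, there is no $[a,b]$-factor (via Lemma \ref{lemma2.2}), and $t\ge 1$ (via Lemma \ref{lemma2.6}). Both graphs have a partition $C_i\cup T$ with $G[C_i]=K_{n-b-1}$ a clique, $T$ independent, and $|C_i|=n-b-1\ge |T|+2$, so Lemma \ref{lemma2.6} applies. It reduces toughness to the Hall-type condition $|N_{C_i}(W)|\ge |W|+1$ for every nonempty $W\subseteq T$.

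\textbf{No $[a,b]$-factor.} For ${T}_{n,1}^{a,b}$ take $S=S_1$, $|S|=a$. Every $x\in R_1$ has $d_{G-S}(x)\ge |R_1|-1>a$, while the vertices of $T$ have degrees $2a-b-2$, $1$ or $0$ in $G-S$, all less than $a$. Hence the set in Lemma \ref{lemma2.2} is exactly $T$. Exactly $b-a+1$ vertices of $T\setminus\{u_1\}$ have degree $1$, so
$$\sum_{v\in T}d_{G-S}(v)=(2a-b-2)+(b-a+1)=a-1.$$
Therefore $a(b+1)-(a-1)=ab+1>ab=b|S|$, and Lemma \ref{lemma2.2} shows there is no $[a,b]$-factor.

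For ${T}_{n,2}^{a,b}$ take $S=S_2$, $|S|=a-1$. The degrees in $T$ are $a-1$ for $u_1$ and $1$ for the others; when $a=2$ all are $1$. This gives
$$a(b+1)-(a-1)-b=ab-b+1>b(a-1)=b|S|,$$
so again there is no $[a,b]$-factor. In both graphs every $T$-vertex has degree exactly $a$ in $G$, and the clique vertices have larger degree, so $\delta=a$.

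\textbf{Toughness.} By Lemma \ref{lemma2.6} we must show $|N_{R_i}(W)|\ge |W|+1-|S_i|$ for every nonempty $W\subseteq T$. We use Lemma \ref{lemma2.8} with $L=T\setminus\{u_1\}$, where every vertex of $L$ has exactly one $R$-neighbour. This applies to ${T}_{n,2}^{a,b}$, and to ${T}_{n,1}^{a,b}$ after setting aside the vertices with no $R$-neighbour. The lemma reduces the condition to checking the two extreme sets $W=L$ and $W=T$.

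For ${T}_{n,2}^{a,b}$ with $a\ge3$ we need:
\begin{itemize}
\item $|N_{R_2}(L)|\ge b-a+2$. The $R_2$-neighbours of $L$ are $w_1$, then $w_{i-a+1}$ for $a+1\le i\le \min\{b+1,2a-3\}$, then $w_{i-a+2}$ for $2a-2\le i\le b+1$, giving $1+(b-a+1)=b-a+2$ distinct vertices. We expect the index bookkeeping here, across the $\min$ cases, to be the main place for error.
\item $|N_{R_2}(T)|\ge b-a+3$. Here $u_1$ contributes $w_1,\dots,w_{a-1}$, and the vertices $w_{i-a+2}$ with index at least $a$ are new.
\end{itemize}
For $a=2$, $|S_2|=1$ and the neighbourhoods $w_i$ are distinct, so $|N(W)|=1+|W|$ directly.

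For ${T}_{n,1}^{a,b}$ we split $W=(W\cap P)\cup(W\cap Z)$ with $P=\{u_1,\dots,u_{b-a+2}\}$ and $Z=T\setminus P$; vertices of $Z$ have $N_C=S_1$. Since $|S_1|=a$, it suffices to show:
\begin{itemize}
\item $|N_{R_1}(W\cap P)|\ge |W\cap P|+1$ when $W\cap Z=\emptyset$;
\item $|N_{R_1}(W\cap P)|\ge |W\cap P|$ in general, using $|Z|=a-1$.
\end{itemize}
If $W\cap Z\neq\emptyset$, the second bound gives $a+|W\cap P|\ge |W|+1$ because $|W\cap Z|\le a-1$. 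If $W\cap Z=\emptyset$, the first bound gives $a+|W\cap P|+1\ge |W|+1$ directly.

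The Hall-type bounds on $P$ come from the construction. The sets $\{w_i\}$ and $\{w_{i+1}\}$ assigned to $u_2,\dots,u_{b-a+2}$ are pairwise distinct, so $|N_{R_1}(W\cap P)|\ge |W\cap P|$ always. When $u_1\in W$, the set $\{w_1,\dots,w_{2a-b-2}\}$ has size at least $1$ because $b\le 2a-3$. We expect the main obstacle to be the strict version: showing that $W\cap P$ with $W\cap Z=\emptyset$ gains one extra neighbour. This needs $|N_{R_1}(P)|\ge |P|+1$ together with the overlap structure between $N(u_1)$ and the singletons. We would handle it by the same two-extreme-set argument as in Lemma \ref{lemma2.8}, applied inside $P$: the maximum of $|X|-|N_{R_1}(X)|$ over nonempty $X\subseteq P$ is attained at $X=P\setminus\{u_1\}$ or $X=P$, and both are $\le -1$ by counting indices.
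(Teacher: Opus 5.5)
Your framework matches the paper's: Lemma \ref{lemma2.2} with $S=S_i$, then Lemma \ref{lemma2.6}, with Lemma \ref{lemma2.8} for ${T}_{n,2}^{a,b}$. Your no-factor counts and the ${T}_{n,2}^{a,b}$ part are fine. The error is in the toughness check for ${T}_{n,1}^{a,b}$. Your first bullet, $|N_{R_1}(W)|\ge |W|+1$ for nonempty $W\subseteq P$, is false. So is your claim that both extreme values of $|X|-|N_{R_1}(X)|$ on $P$ are $\le -1$.
\begin{itemize}
\item Already $W=\{u_2\}$ has $|N_{R_1}(W)|=1$.
\item At $X=P\setminus\{u_1\}$ the singletons are pairwise distinct, so the value is $0$.
\item At $X=P$,
\[
|N_{R_1}(P)|=(2a-b-2)+(b-a+1)-\min\{b-a+1,2a-b-3\}=b-a+2=|P|
\]
whenever $2a-b-3\le b-a+1$ (e.g.\ $a=5$, $b=6$). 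This is why $|N_{C_1}(T)|=b+2=|T|+1$ is tight there.
\end{itemize}
So the step you single out as the main obstacle cannot be carried out.

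It is also not needed. Your second bullet alone suffices in every case:
\[
|N_{C_1}(W)|=a+|N_{R_1}(W\cap P)|\ge a+|W\cap P|\ge |W\cap Z|+1+|W\cap P|=|W|+1,
\]
using only $|W\cap Z|\le|Z|=a-1$. When $W\cap Z=\emptyset$ this just uses $a\ge 1$. That one-line estimate is precisely the paper's argument.

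What does need a real justification is the second bullet when $u_1\in W$. Saying $N_{R_1}(u_1)\neq\emptyset$ is not enough, since its vertices might all be hit by the singletons. The correct reason is that $w_{2a-b-2}$ is a private neighbour of $u_1$: the singletons of $u_2,\dots,u_{b-a+2}$ have indices $\le 2a-b-3$ or $\ge 2a-b-1$. Hence $|N_{R_1}(X)|\ge (|X|-1)+1$. Equivalently, Lemma \ref{lemma2.8} applied inside $P$ gives maximum $0$, not $-1$.

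There is also a minor slip: not every $T$-vertex has degree exactly $a$. In ${T}_{n,1}^{a,b}$, $u_1$ has degree $3a-b-2\ge a+1$ and $u_2,\dots,u_{b-a+2}$ have degree $a+1$. In ${T}_{n,2}^{a,b}$ with $a\ge 3$, $u_1$ has degree $2a-2$. Nevertheless $\delta=a$ still holds.
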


\begin{proof}
Recall the definitions of ${T}_{n,1}^{a,b}$ and ${T}_{n,2}^{a,b}$. In the following, ${T}_{n,1}^{a,b}$ is considered when $b\leq 2a-3$. Note that $V({T}_{n,1}^{a,b})=C_1\cup T$ where $C_1=S_1\cup\ R_1$, $V({T}_{n,2}^{a,b})=C_2\cup T$ where $C_2=S_2\cup\ R_2$, $a|T|-\sum_{u\in T}d_{{T}_{n,1}^{a,b}-S_1}(u)=b|S_1|+1$, and $a|T|-\sum_{u\in T}d_{{T}_{n,2}^{a,b}-S_2}(u)=b|S_2|+1$. It is easy to check that $$T=\{u\in V({T}_{n,1}^{a,b})\setminus S_1: d_{{T}_{n,1}^{a,b}-S_1}(u)<a\}=\{u\in V({T}_{n,2}^{a,b})\setminus S_2: d_{{T}_{n,2}^{a,b}-S_2}(u)<a\}.$$ By Corollary \ref{corollary2.1}, both ${T}_{n,1}^{a,b}$ and ${T}_{n,2}^{a,b}$ have no $[a,b]$-factor.

For ${T}_{n,1}^{a,b}$, let $Z=\{u\in T: N_{R_1}(u)=\emptyset\}$. Observe that $|Z|=a-1$ and $|N_{R_1}(X)|\geq |X|$ for every $X\subseteq T\setminus Z$. Then for every $\emptyset \neq X\subseteq T$, $$|N_{C_1}(X)|=a+|N_{R_1}(X\setminus Z)|\geq a+|X\setminus Z|\geq |X|+1>|X|.$$ It follows from Lemma \ref{lemma2.6} that $t({T}_{n,1}^{a,b})\geq 1$. Note that $\delta({T}_{n,1}^{a,b})\geq a$. Therefore ${T}_{n,1}^{a,b}\in \mathcal{T}_n(a,b)$ when $b\leq 2a-3$.

For ${T}_{n,2}^{a,b}$ with $a=2$, we can obtain that $|N_{C_2}(X)|=|X|+1>|X|$ for any $\emptyset \neq X\subseteq T$. For ${T}_{n,2}^{a,b}$ with $a\geq 3$, we have $|N_{R_2}(T\setminus \{u_1\})|=b-a+2$, $|N_{R_2}(u_1)|=a-1$, $|N_{R_2}(T\setminus \{u_1\})\cap N_{R_2}(u_1)|=\min\{b-a+2, a-2\}$ and $|N_{R_2}(T)|\geq b-a+3$. By Lemma \ref{lemma2.8}, $$\max_{\emptyset\neq X\subseteq T}(|X|-|N_{R_2}(X)|)=\max\{b-|N_{R_2}(T\setminus \{u_1\})|,b+1-|N_{R_2}(T)|\}=a-2.$$ Then for any $\emptyset \neq X\subseteq T$, we have $|N_{C_2}(X)|=a-1+|N_{R_2}(X)|\geq |X|+1.$ By Lemma \ref{lemma2.6}, $t({T}_{n,2}^{a,b})\geq 1$. Clearly, $\delta({T}_{n,2}^{a,b})\geq a$. Hence ${T}_{n,2}^{a,b}\in \mathcal{T}_n(a,b)$.
\end{proof}

\section{Proof of Theorems \ref{theorem1.3} and \ref{theorem1.4}}
\medskip
\noindent  \textbf{Proof of Theorem \ref{theorem1.3}.}
Suppose that $b\geq 2$ and $b(G)\geq \frac{1}{b}$. Let $X\subseteq V(G)$, and let $I$ be the set of isolated vertices of $G-X$. If $I=\emptyset$, then $i(G-X)=0\leq b|X|$. Otherwise, $N_G(I)\subseteq X$ and $I\cap X=\emptyset$, so $N_G(I)\neq V(G)$. By the definition of the binding number, $$\frac{1}{b}\leq b(G)\leq \frac{|N_G(I)|}{|I|}\leq\frac{|X|}{|I|}.$$ Hence $i(G-X)=|I|\le b|X|$. By Theorem \ref{theorem1.2}, $G$ contains a $\{K_{1,1},K_{1,2},\ldots,K_{1,b}\}$-factor, which is a special $[1,b]$-factor.
\hspace*{\fill}$\Box$

Suppose that $a$, $b$, and $n$ are three positive integers such that $2\leq a<b$ and $n\geq (b+1)^2(2a-1)^2+2b+3$. Suppose that $G$ has the maximum spectral radius among graphs in $\mathcal{B}_n(a,b)$. For convenience, we use $\delta$ instead of $\delta(G)$. In the following, we write $\rho(G)$ as $\rho$ for short. By Lemma \ref{lemma2.2}, there exists $S\subseteq V(G)$ such that
\begin{eqnarray}\label{equation3.1}
\sum_{x\in T}d_{G-S}(x)\leq at-bs-1,
\end{eqnarray}
where $T=\{x\in V(G)\setminus S\mid d_{G-S}(x)<a \}$, $|S|=s$ and $|T|=t$.

Before proving $G\cong \mathcal{B}_n^{a,b}$, which is the main goal, we first prove the following several lemmas, which are very crucial to characterizing the specific structure of $G$.

\begin{lem}\label{lemma3.1}
$s\geq 1$ and $t\geq b+1$. Moreover, $G[V(G)\setminus T]$ is complete and $d_{G}(x)=n-1$ for every $x\in S$.
\end{lem}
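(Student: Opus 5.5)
We prove the four assertions in the order: $s\geq 1$, then $t\geq b+1$, then completeness of $G[V(G)\setminus T]$ together with $d_G(x)=n-1$ for $x\in S$.

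\textbf{Step 1: $s\geq 1$ and $t\geq b+1$.} Suppose $s=0$. Then \eqref{equation3.1} gives $\sum_{x\in T}d_G(x)\leq at-1$. However, $\delta(G)\geq a$ gives $\sum_{x\in T}d_G(x)\geq at$, a contradiction. Hence $s\geq 1$. Next, since $d_{G-S}(x)\geq 0$, \eqref{equation3.1} gives $0\leq at-bs-1$, so $at\geq bs+1$. We now exploit $\delta\geq a$ more finely. Every $x\in T$ satisfies $d_{G-S}(x)\geq a-d_S(x)\geq a-s$. If $s\geq a$ this bound is vacuous, so we argue differently: $at\geq bs+1\geq ba+1$ gives $t\geq b+1$ directly. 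If $s<a$, summing the bound gives $\sum_{x\in T}d_{G-S}(x)\geq (a-s)t$. Combined with \eqref{equation3.1}, this yields $(a-s)t\leq at-bs-1$, hence $st\geq bs+1$ and $t\geq b+1/s>b$, so $t\geq b+1$.

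\textbf{Step 2: adding edges preserves membership in $\mathcal{B}_n(a,b)$.} We plan a maximality argument. Let $G'$ be obtained from $G$ by adding all missing edges inside $V(G)\setminus T$, and all missing edges between $S$ and $T$. We must check that $G'$ still lies in $\mathcal{B}_n(a,b)$.
\begin{itemize}[leftmargin=*]
\item The minimum degree can only increase, so $\delta(G')\geq a$.
\item Adding edges cannot decrease $|N(X)|$. Since $b(G)\geq 1$, for every $X$ with $N_{G'}(X)\neq V$ we also have $N_G(X)\neq V$, so $|N_{G'}(X)|\geq |N_G(X)|\geq |X|$. Thus $G'$ is still 1-binding.
\item The added edges either lie inside $S\cup(V\setminus(S\cup T))$, touch only $S$, or join $S$ to $T$. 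None of them joins two vertices of $G-S$ that involve $T$. Hence $d_{G'-S}(x)=d_{G-S}(x)$ for $x\in T$. Vertices of $V\setminus(S\cup T)$ still have $d_{G'-S}\geq a$, so the set $T$ is unchanged. Therefore \eqref{equation3.1} still holds for $G'$ with the same $S$, and by Corollary \ref{corollary2.1}, $G'$ has no $[a,b]$-factor.
\end{itemize}

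\textbf{Step 3: conclusion by spectral maximality.} The graph $G'$ is connected, because $s\geq 1$ and the vertices of $S$ are now universal. If $G'\neq G$, Lemma \ref{lemma2.1} gives $\rho(G')>\rho(G)$, contradicting the maximality of $\rho$. Hence $G=G'$. It remains to see that each $x\in S$ is adjacent to everything. After the completion, $x$ is adjacent to all of $V\setminus T$ and all of $T$, so $d_G(x)=n-1$.

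\textbf{Main obstacle.} The delicate point is Step 2, specifically verifying that $T$ is unchanged and that the 1-binding property survives. Edges between $S$ and $T$ do not affect degrees in $G-S$, which handles the first part. Monotonicity of neighbourhoods handles the second.
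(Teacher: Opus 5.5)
Your proof is correct and follows the paper's argument: the bound $d_{G-S}(x)\ge a-s$ together with \eqref{equation3.1} gives $s(t-b)\ge 1$, and the completed graph $G'$ is shown to remain in $\mathcal{B}_n(a,b)$, so Lemma \ref{lemma2.1} forces $G=G'$. The paper gets $s(t-b)\ge 1$ in one line without your case split on $s\ge a$, since the bound stays valid (if weak) there; your explicit check that $T$ and the degrees $d_{G-S}(x)$ for $x\in T$ are unchanged in $G'$ is a detail the paper leaves implicit.
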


\begin{proof}
For every $x\in T$, we have $$a\leq d_{G}(x)=d_{S}(x)+d_{G-S}(x)\leq s+d_{G-S}(x),$$ which implies that $\sum_{x\in T}d_{G-S}(x)\geq at-st.$ By \eqref{equation3.1}, $s(t-b)\geq 1.$ Since $s$ and $t$ are integers, we can deduce that $s\geq 1$ and $t\geq b+1$.

Construct a new graph $G'$ by adding edges such that $G'[V(G')\setminus T]$ is complete and $d_{G'}(x)=n-1$ for every $x\in S$. By Lemma \ref{lemma2.1}, $\rho\leq \rho(G')$. Obviously, $\delta(G')\geq \delta(G)\geq a$ and $b(G')\geq b(G)\geq 1$. Note that $$\sum_{x\in T}d_{G'-S}(x)=\sum_{x\in T}d_{G-S}(x)\leq at-bs-1.$$ By Corollary \ref{corollary2.1}, $G'$ has no $[a,b]$-factor. Then $G'\in \mathcal{B}_n(a,b)$. By the maximality of $\rho$ and Lemma \ref{lemma2.1}, we have $G\cong G'$.
\end{proof}

By Lemma \ref{lemma3.1}, $G$ is connected. Let $\boldsymbol{x} = (x_u)_{u\in V(G)}$ be the Perron vector of $A(G)$.

\begin{lem}\label{lemma3.2}
$t=b+1.$
\end{lem}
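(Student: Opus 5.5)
We argue by contradiction: assume $t\geq b+2$ (Lemma \ref{lemma3.1} already gives $t\geq b+1$) and show that then $\rho<\rho(B_{n,2}^{a,b})$. Since $B_{n,2}^{a,b}\in\mathcal{B}_n(a,b)$ by Lemma \ref{lemma2.11}, this contradicts the maximality of $\rho$. The lower bound on the comparison graph is immediate. $B_{n,2}^{a,b}$ contains $K_{n-b-1}$ as a proper subgraph, so Lemma \ref{lemma2.1} gives $\rho(B_{n,2}^{a,b})>n-b-2$. It therefore suffices to prove $\rho\leq n-b-2$ whenever $t\geq b+2$.

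\textbf{Step 1 (edge count).} By Lemma \ref{lemma3.1}, $G[V(G)\setminus T]$ is complete and every vertex of $S$ is universal. The edges of $G$ split into three types:
\begin{itemize}
\item edges inside $V(G)\setminus T$, of which there are $\binom{n-t}{2}$;
\item edges between $S$ and $T$, of which there are at most $st$;
\item the remaining edges incident to $T$, each of which is counted at least once in $\sum_{x\in T}d_{G-S}(x)\leq at-bs-1$.
\end{itemize}
Hence
$$e(G)\leq \binom{n-t}{2}+st+at-bs-1.$$
From \eqref{equation3.1} and $\sum_{x\in T} d_{G-S}(x)\ge 0$ we also get $s\leq (at-1)/b<t$.

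\textbf{Step 2 (small $t$).} I would apply Hong's bound $\rho\leq\sqrt{2e(G)-n+1}$, which is valid since $\delta\ge a\ge 1$. For $b+2\leq t\leq \varepsilon n$, with $\varepsilon$ a suitably small constant depending on $a,b$, the right-hand side is at most
$$\sqrt{(n-t)(n-t-1)+2t(s+a)}\leq \sqrt{(n-b-2)^2-(n-b-2)+O(t^2)}.$$
This is $<n-b-2$ because $n\geq (b+1)^2(2a-1)^2+2b+3$. The key point is that the loss $(2k+1)(n-b-2)$ coming from $t=b+2+k$ beats the gain $2t(s+a)\leq 2t(at/b+a)$ as long as $t$ is not linear in $n$.

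\textbf{Step 3 (large $t$) — the main obstacle.} When $t$ is a positive fraction of $n$, the crude term $st$ with $s$ up to $at/b$ can be of order $n^2$, so Hong's bound no longer suffices. Here I would bound $\rho$ more carefully, either by the equitable-quotient argument or by the max row-sum bound on $A(G)^2$. The structure available is:
\begin{itemize}
\item $G$ is a spanning subgraph of $K_s\vee\big(K_{n-s-t}\cup H\big)$, where $H$ lives on $T\cup(V\setminus(S\cup T))$;
\item every vertex of $T$ has fewer than $a$ neighbours outside $S$;
\item $s<at/b$.
\end{itemize}
The Perron equations then give $\rho x_u\leq \sum_{v\in S}x_v+(a-1)\max x$ for $u\in T$. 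Combined with the clique equation for $V\setminus T$, this should yield
$$\rho\leq \tfrac12\Big(n-t-1+\sqrt{(n-t-1)^2+4s(t+a)}\Big)+O(1).$$
This is at most roughly $\max\{n-t,\; s+a+\sqrt{s t}\}$, which is below $n-b-2$ for large $t$ because $s+t\le n$ and $s<t$. The delicate part is making these two regimes overlap, and that is where I expect the real work. If the above estimate is too weak, the fallback is to use the 1-binding condition applied to $X=T$ (when $N_G(T)\ne V(G)$) to force $|N_{V\setminus S}(T)|\ge t-s$. This limits how unbalanced $s$ and $t$ can be, and such a limit would be the extra ingredient needed to close the gap.
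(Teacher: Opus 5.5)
Your framework is the paper's: a contradiction with $\rho>n-b-2$ (from $B_{n,2}^{a,b}\in\mathcal{B}_n(a,b)$), the edge count from Lemma~\ref{lemma3.1}, and an edge bound on $\rho$. Steps 1--2 are essentially fine, apart from loose $O(t^2)$ bookkeeping. The gap is Step 3, which is not a proof. The eigenvector estimate is only asserted (``this should yield'', with an unspecified $+O(1)$), and its closing comparison does not follow from the reasons you give. With only $s<t$ and $s+t\le n$ one can have $s\approx t\approx n/2$, and then $s+a+\sqrt{st}\approx n+a>n-b-2$. Also, $\frac12(A+\sqrt{A^2+4B})$ is bounded by $A+\sqrt B$, not by $\max\{A,\sqrt B\}$. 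So, as written, the lemma is unproved when $t$ is linear in $n$.

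The obstacle you identify is not real. You used $s+t\le n$ only inside the heuristic, but it belongs in the edge count. Keep the $-bs$ term, so that Step 1 reads $e(G)\le\binom{n-t}{2}+at+s(t-b)-1$. Then $s<t$ and $s\le n-t$ give
\[
2e(G)<(n-t)(n-t-1)+2at+2(t-b)\min\{t,n-t\}.
\]
For $b+2\le t\le n/2$ the right-hand side is a convex quadratic in $t$, so you only need to check the endpoints. There, $(n-b-2)^2$ exceeds it by $n-(b+2)(2a+5)$ at $t=b+2$ and by $\frac{n^2}{4}-(a+b+\frac72)n+(b+2)^2$ at $t=n/2$. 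Both are positive under $n\ge(b+1)^2(2a-1)^2+2b+3$. For $t\ge n/2$ the excess is $t^2-(2a+2b+1)t+(b+2)^2-3n$, which is increasing in $t$ there and positive at $t=n/2$. Hence $\rho\le\sqrt{2e(G)}<n-b-2$ for every $t\ge b+2$, with no Perron-vector analysis or appeal to the 1-binding condition. It is true that $2st$ is of order $n^2$ when $t$ is linear in $n$. But the drop $n^2-(n-t)^2$ is of the same order, and $s\le\min\{t,n-t\}$ keeps the total a constant factor below $n^2$. This is exactly how the paper proves the lemma.
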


\begin{proof}
By Lemma \ref{lemma2.11}, $B_{n,2}^{a,b}\in \mathcal{B}_n(a,b)$. Observe that $\rho(B_{n,2}^{a,b})>\rho(K_{n-b-1})=n-b-2$. Then we can deduce that
\begin{eqnarray}\label{equation3.2}
\rho>n-b-2.
\end{eqnarray}
Suppose to the contrary that $t\geq b+2$. Let $R=V(G)\setminus(S\cup T)$. Since $\sum_{x\in T}d_{G-S}(x)=2|E(G[T])|+|[R,T]|_G$,
\begin{eqnarray*}
e(G)&=&\binom{n-t}{2}+st+|[R,T]|_G+e(G[T])\leq\binom{n-t}{2}+st+\sum_{x\in T}d_{G-S}(x).
\end{eqnarray*}
By \eqref{equation3.1}, we have
\begin{eqnarray}\label{equation3.3}
e(G)\leq\binom{n-t}{2}+at+s(t-b)-1.
\end{eqnarray}
It follows from $\sum_{x\in T}d_{G-S}(x)\geq 0$ that $at-bs\geq 1$, implying $s<\frac{a}{b}t<t$. Combining this with $s\leq n-t$ and \eqref{equation3.3}, we obtain that
\begin{eqnarray}\label{equation3.4}
2e(G)<(n-t)(n-t-1)+2at+2(t-b)\min\{t,n-t\}.
\end{eqnarray}
If $b+2\leq t\leq \frac{n}{2}$, then $\min\{t,n-t\}=t$. It follows from \eqref{equation3.4} that
$2e(G)<(n-t)(n-t-1)+2at+2t(t-b).$ By $n\geq (b+1)^2(2a-1)^2+2b+3$ and $b+2\leq t\leq \frac{n}{2}$, we have
\begin{eqnarray*}
&&(n-b-2)^2-[(n-t)(n-t-1)+2at+2t(t-b)]\\
&\geq&\min\left\{n-(b+2)(2a+5),(\frac{n}{4}-a-b-\frac{7}{2})n+(b+2)^2\right\}>0,
\end{eqnarray*}
which implies that $2e(G)<(n-b-2)^2$. If $t\geq \frac{n}{2}$, then $\min\{t,n-t\}=n-t$. It follows from \eqref{equation3.4} that $2e(G)<(n-t)(n-t-1)+2at+2(n-t)(t-b).$ By $n\geq (b+1)^2(2a-1)^2+2b+3$ and $t\geq \frac{n}{2}$, we have
\begin{eqnarray*}
&&(n-b-2)^2-[(n-t)(n-t-1)+2at+2(n-t)(t-b)]\\
&=&t^2-(2a+2b+1)t+b^2+4b-3n+4\\
&\geq& \frac{1}{4}n^2-(a+b+\frac{7}{2})n+b^2+4b+4>0,
\end{eqnarray*}
implying $2e(G)<(n-b-2)^2$. Since $\rho^2\leq tr(A(G)^2)=2e(G)$, we can obtain that $\rho<n-b-2$, contradicting \eqref{equation3.2}. Hence $t=b+1$.
\end{proof}

\begin{lem}\label{lemma3.3}
$1\leq s\leq a$ and $\sum_{x\in T}d_{G-S}(x)=a(b+1)-bs-1$.
\end{lem}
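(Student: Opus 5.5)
We need to prove Lemma \ref{lemma3.3}: $1\le s\le a$, and equality holds in \eqref{equation3.1} with $t=b+1$.

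For the upper bound on $s$, we use $t=b+1$ from Lemma \ref{lemma3.2}. Then \eqref{equation3.1} reads $\sum_{x\in T}d_{G-S}(x)\le a(b+1)-bs-1$. Since the left side is nonnegative, $bs\le ab+a-1$, so $s\le a+\frac{a-1}{b}<a+1$ because $a-1<b$. Hence $s\le a$, and $s\ge 1$ is already given by Lemma \ref{lemma3.1}.

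For the equality, suppose to the contrary that $\sum_{x\in T}d_{G-S}(x)\le a(b+1)-bs-2$. We will add one edge to $G$ and obtain a graph $G'\in\mathcal{B}_n(a,b)$ with $\rho(G')>\rho$. This contradicts maximality via Lemma \ref{lemma2.1}, since $G$ is connected and $G$ is a proper subgraph of $G'$. The edge to add is $xw$ with $x\in T$ and $w\in R=V(G)\setminus(S\cup T)$, $w\notin N(x)$, chosen so that $d_{G-S}(x)$ increases by one. We need three facts about $G'$.

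First, $\delta(G')\ge\delta(G)\ge a$ and $b(G')\ge b(G)\ge 1$, because adding edges cannot decrease $N(X)$. Second, $G'$ has no $[a,b]$-factor, which we check with Corollary \ref{corollary2.1} using the same $S$. The sum over the same set $T$ increases by exactly one, so it is at most $a(b+1)-bs-1$. Membership in $T'$ may change if $d_{G'-S}(x)$ reaches $a$. In that case $x$ leaves $T'$, and both sides change favorably: the left side loses $d_{G'-S}(x)=a$ and the right side loses $a$. So the inequality of Corollary \ref{corollary2.1} still holds for $T'=\{v: d_{G'-S}(v)<a\}$. To avoid this bookkeeping, it is simpler to pick $x$ with $d_{G-S}(x)\le a-2$ when such an $x$ exists.

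Third, we must show a suitable non-edge exists. Every $x\in T$ satisfies $d_{G-S}(x)\le a-1$, while $|R|=n-s-b-1$ is huge, so $x$ has a non-neighbor $w\in R$. Thus we can always add an edge from $T$ to $R$, provided $R\neq\emptyset$, which holds since $n$ is large. Note that edges inside $T$ would also work but would change two degrees, so we use $T$–$R$ edges.

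The main obstacle is the case where the extra edge pushes $x$ out of $T$ and the Heinrich condition must be re-verified with the new set $T'$. The observation above handles it: removing $x$ from $T$ changes $a|T|-\sum d$ by $-(a-d_{G'-S}(x))=0$. So we get $a|T'|-\sum_{T'}d_{G'-S}=a|T|-\sum_T d_{G-S}-1\ge bs+1$, and Corollary \ref{corollary2.1} applies. This gives the contradiction and hence the equality.
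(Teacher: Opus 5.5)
Your proposal is correct and follows the paper's proof. It uses the same bound $s\le a+\frac{a-1}{b}<a+1$, and the same contradiction: add one $T$--$R$ edge, then invoke Corollary~\ref{corollary2.1} and Lemma~\ref{lemma2.1}. The only difference is minor. The paper notes that $s\ge 1$ forces $\sum_{x\in T}d_{G-S}(x)<(b+1)(a-1)$, so a vertex with $d_{G-S}(u)\le a-2$ always exists and $T$ stays unchanged. Your direct bookkeeping for the case where $x$ leaves $T$ is also correct and makes that observation unnecessary.
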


\begin{proof}
According to Lemma \ref{lemma3.1}, $s\geq 1$. By \eqref{equation3.1}, $\sum_{x\in T}d_{G-S}(x)\geq 0$ and Lemma \ref{lemma3.2}, we have $s\leq \frac{a}{b}t-\frac{1}{b}=a+\frac{a-1}{b}<a+1$. Therefore $1\leq s \leq a$.

It follows from \eqref{equation3.1} and Lemma \ref{lemma3.2} that $\sum_{x\in T}d_{G-S}(x)\leq a(b+1)-bs-1$. Suppose to the contrary that
\begin{eqnarray}\label{equation3.5}
\sum_{x\in T}d_{G-S}(x)\leq a(b+1)-bs-2.
\end{eqnarray}
By $s\geq 1$, we have $a(b+1)-bs-1\leq (b+1)(a-1)$, which implies that $\sum_{x \in T}d_{G-S}(x)<(b+1)(a-1)$. Since $t=b+1$, we can deduce that there exists some vertex $u\in T$ such that $d_{G-S}(u)\leq a-2$. Note that $|V(G)-T-S|=n-s-b-1\geq n-a-b-1>a-2$. Then there exists a vertex $v$ in $V(G)-T-S$ such that $uv\notin E(G)$. Construct a new graph $G'$ from $G$ by adding the edge $uv$. By \eqref{equation3.5}, we have $$\sum_{x\in T}d_{G'-S}(x)=\sum_{x\in T}d_{G-S}(x)+1\leq a(b+1)-bs-1.$$ By Lemma \ref{lemma2.2}, $G'$ has no $[a,b]$-factor. Obviously $\delta(G')\geq a$ and $b(G')\geq 1$. Then $G'\in \mathcal{B}_n(a,b)$. Note that $G$ is a proper spanning subgraph of the connected graph $G'$. It follows from Lemma \ref{lemma2.1} that $\rho(G')>\rho$, contrary to the maximality of $\rho$. Hence $\sum_{x\in T}d_{G-S}(x)=a(b+1)-bs-1$.
\end{proof}

For convenience, we write $d_{G-T}(x)=|N_G(x)\setminus T|$ for each $x\in T$. We can deduce that for every $x\in T$,
\begin{eqnarray*}
d_{G-T}(x)=s+d_{G-S}(x)-d_T(x)\leq s+d_{G-S}(x),
\end{eqnarray*}
with equality for every $x\in T$ if and only if $E(G[T])=\emptyset$. Define $P(G)=\sum_{x\in T}d_{G-T}(x)^2$.

\begin{lem}\label{lemma3.4}
The following bounds hold.\\
(i) If $1\leq s\leq a-1$, then
\begin{eqnarray}\label{equation3.6}
P(G)=\sum_{x\in T}d_{G-T}(x)^2\leq (a+s-1)^2+ba^2.
\end{eqnarray}
Equality holds if and only if $E(G[T])=\emptyset$ and the sequence ($d_{G-S}(x)$: $x\in T$) consists of one entry $a-1$ and $b$ entries $a-s$.\\
(ii) If $s=a$, then $b\leq 2a-2$ and
\begin{eqnarray}\label{equation3.7}
P(G)=\sum_{x\in T}d_{G-T}(x)^2\leq (3a-b-1)^2+(b-a)(a+1)^2+a^3.
\end{eqnarray}
Equality holds if and only if $E(G[T])=\emptyset$ and the sequence ($d_{G-S}(x)$: $x\in T$) consists of one entry $2a-b-1$, $b-a$ entries $1$ and $a$ entries $0$.
\end{lem}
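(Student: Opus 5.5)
The plan is to reduce both parts to maximizing a convex function of the integer sequence $y_x:=d_{G-S}(x)$, $x\in T$, under linear constraints. First, for every $x\in T$ we have $0\le d_{G-T}(x)=s+y_x-d_T(x)\le s+y_x$. Hence
\[
P(G)\le \sum_{x\in T}(s+y_x)^2 ,
\]
and equality holds if and only if $d_T(x)=0$ for all $x$, i.e.\ $E(G[T])=\emptyset$. Squaring is strictly increasing on nonnegative numbers, so any edge inside $T$ makes the inequality strict. It therefore suffices to maximize $\sum_{x\in T}(s+y_x)^2$ and to identify the unique maximizing sequence.

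The constraints on $(y_x)$ come from three sources. Lemma \ref{lemma3.2} gives $t=b+1$. Lemma \ref{lemma3.3} gives $\sum_{x\in T}y_x=a(b+1)-bs-1$. The definition of $T$ gives $y_x\le a-1$. Finally, $a\le d_G(x)\le s+y_x$ gives $y_x\ge a-s$.

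For (i), with $1\le s\le a-1$, I substitute $z_x=y_x-(a-s)\ge 0$. Then
\[
\sum_{x\in T} z_x=a(b+1)-bs-1-(b+1)(a-s)=s-1 \quad\text{and}\quad s+y_x=a+z_x .
\]
Expanding,
\[
\sum_{x\in T}(a+z_x)^2=(b+1)a^2+2a(s-1)+\sum_{x\in T} z_x^2 .
\]
Since the $z_x$ are nonnegative, $\sum z_x^2\le\big(\sum z_x\big)^2=(s-1)^2$, with equality if and only if at most one $z_x$ is nonzero. Thus the maximum is attained exactly when one entry has $z_x=s-1$ (so $y_x=a-1$, which is allowed) and the remaining $b$ entries have $z_x=0$ (so $y_x=a-s$). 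This gives the value $(a+s-1)^2+ba^2$ and the stated equality characterization.

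For (ii), with $s=a$, the sum condition becomes $\sum_{x\in T}y_x=a-1$ and $y_x\ge 0$. The extra ingredient here is the 1-binding hypothesis. Let $X=\{x\in T:y_x=0\}$. Such vertices have no neighbours outside $S$, so $N_G(X)\subseteq S\ne V(G)$. By $b(G)\ge 1$, if $X\neq\emptyset$ then $|X|\le|N_G(X)|\le a$. Hence at least $b+1-a$ entries satisfy $y_x\ge1$, which forces $b+1-a\le a-1$, i.e.\ $b\le 2a-2$. Now
\[
\sum_{x\in T}(a+y_x)^2=(b+1)a^2+2a(a-1)+\sum_{x\in T} y_x^2 ,
\]
so I need to maximize $\sum y_x^2$ over nonnegative integer sequences with sum $a-1$ and at most $a$ zero entries. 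I would use a shifting argument. If two entries satisfy $y_i\ge y_j\ge 2$, then moving one unit from $j$ to $i$ keeps the sum and does not create a zero, while strictly increasing $\sum y^2$. If there are fewer than $a$ zeros and two entries with $y_i\ge y_j=1$, then moving that unit from $j$ to $i$ creates only one new zero and again strictly increases $\sum y^2$. So a maximizer has exactly $a$ zeros and all but one of the positive entries equal to $1$. Since the sum is $a-1$, this is the sequence with one entry $2a-b-1$, $b-a$ entries $1$ and $a$ entries $0$. In the boundary case $b=2a-2$ the entry $2a-b-1$ is also $1$, which is consistent. The maximizer is unique up to order, and substituting gives $(3a-b-1)^2+(b-a)(a+1)^2+a^3$.

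The main obstacle is case (ii). There one has to see that the binding number is what limits the number of zero entries, and then carry out the strict-uniqueness shifting argument carefully while respecting the constraint on zeros. Case (i) is a direct convexity computation.
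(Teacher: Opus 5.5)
Your proof is correct. It follows the paper in part (i) and in deriving $b\le 2a-2$, but the optimization in part (ii) takes a different route.

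\textbf{Part (i).} Both arguments use the same convexity idea. The paper bounds each term with the chord inequality $[x_i-(a-s)](a-1-x_i)\ge 0$ and treats $s=1$ separately. Your shift $z_x=y_x-(a-s)$, together with $\sum z_x^2\le(\sum z_x)^2$, avoids that case split and never needs the upper bound $y_x\le a-1$.

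\textbf{Part (ii).} You obtain $b\le 2a-2$ from the 1-binding condition exactly as the paper does. After that the paper proceeds as follows:
\begin{enumerate}
\item it fixes $z_0=|Z_0|$ and uses the $z_0$-dependent range $1\le d_{G-S}(x)\le a-1-(b-z_0)$ to get a chord bound on each positive entry;
\item summing gives a quadratic in $z_0$, which is increasing on the feasible interval $b-a+2\le z_0\le a$;
\item a separate count pins down the equality sequence, with $b=2a-2$ handled on its own.
\end{enumerate}
Your two exchange moves replace this computation. They give the extremal value and the uniqueness of the maximizing sequence at once, and they cover the boundary case $b=2a-2$ automatically. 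The paper's route gives, in return, an explicit upper bound as a function of the number of zero entries.

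\textbf{One step to make explicit.} You should say why the second move is always available when there are fewer than $a$ zeros. In that case there are at least $(b+1)-(a-1)=b-a+2\ge 3$ positive entries. The first move forces at most one of them to be $\ge 2$, so some positive entry equals $1$ and another positive entry exists.
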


\begin{proof}
Assume first that $1\leq s\leq a-1$. Let
$T=\{v_1,v_2,\dots,v_{b+1}\}$ and $x_i=d_{G-S}(v_i)$, $1\le i\le b+1$. By $\delta\geq a$ and the definition of $T$, we have $a-s\leq x_i\leq a-1$ for every vertex $v_i\in T$. If $s=1$, then $a-s=a-1$, and hence $x_i=a-1$ for every $1\leq i\leq b+1$. Then we have
\begin{eqnarray*}
\sum_{x\in T}d_{G-T}(x)^2\leq \sum_{i=1}^{b+1}(s+x_i)^2=(b+1)a^2=(a+s-1)^2+ba^2
\end{eqnarray*}
with equality if and only if $E(G[T])=\emptyset$ and $d_{G-S}(x)=a-1$ for every $x\in T$. Next consider the case of $2\leq s\leq a-1$. It follows from Lemma \ref{lemma3.3} that $$\sum_{i=1 }^{b+1}x_i=a(b+1)-bs-1=b(a-s)+(a-1).$$ Since $[x_i-(a-s)](a-1-x_i)\geq0$, we have $$a^2+(2a+s-1)[x_i-(a-s)]-(s+x_i)^2\geq 0.$$ Hence $(s+x_i)^2\leq a^2+(2a+s-1)[x_i-(a-s)]$. Then we can obtain that
\begin{eqnarray*}
\sum_{x\in T}d_{G-T}(x)^2&\leq&\sum_{i=1}^{b+1}(s+x_i)^2\leq (b+1)a^2+(2a+s-1)\left[\sum_{i=1}^{b+1}x_i-(b+1)(a-s)\right]\\
&=&(b+1)a^2+(2a+s-1)[b(a-s)+(a-1)-(b+1)(a-s)]\\
&=&(b+1)a^2+(2a+s-1)(s-1)=ba^2+(a+s-1)^2.
\end{eqnarray*}
Equality holds if and only if $[x_i-(a-s)](a-1-x_i)=0$ for $1\leq i\leq b+1$ and $E(G[T])=\emptyset$. Suppose that equality holds. Assume that exactly $k$ of the entries $x_i$ are equal to $a-1$ and the remaining $b+1-k$ entries are equal to $a-s$. Hence $\sum_{i=1}^{b+1}x_i=(b+1)(a-s)+k(s-1)$. Comparing this with $$\sum_{i=1}^{b+1}x_i=b(a-s)+(a-1)=(b+1)(a-s)+(s-1),$$ we obtain $k=1$. Therefore equality holds if and only if $E(G[T])=\emptyset$ and the sequence ($d_{G-S}(x)$: $x\in T$) consists of one entry $a-1$ and $b$ entries $a-s$.

Now assume $s=a$. It follows from Lemma \ref{lemma3.3} that
\begin{eqnarray}\label{equation3.8}
\sum_{x\in T}d_{G-S}(x)=a-1.
\end{eqnarray}
Define $Z_0=\{x\in T: d_{G-S}(x)=0\}$ and $z_0=|Z_0|$. Since $b(G)\geq 1$, we have $z_0\leq |N_{G}(Z_0)|\leq |S|=a,$ which implies that there exist at least $b+1-a$ vertices $x\in T$ such that $d_{G-S}(x)>0$. Together with $\sum_{x\in T}d_{G-S}(x)=a-1$, this implies that $b+1-a\leq b+1-z_0\leq a-1.$ Hence $b\leq 2a-2$ and
\begin{eqnarray}\label{equation3.9}
b-a+2\leq z_0\leq a.
\end{eqnarray}
By the definition of $Z_0$ and $\sum_{x\in T}d_{G-S}(x)=a-1$, we have $1\leq d_{G-S}(x)\leq a-1-(b-z_0),$
where $x\in T\setminus Z_0$. For every $x\in T\setminus Z_0$, by $(d_{G-S}(x)-1)(a-1-b+z_0-d_{G-S}(x))\geq 0$, we can obtain that
\begin{eqnarray}\label{equation3.10}
(a+d_{G-S}(x))^2\leq (3a-b+z_0)d_{G-S}(x)+a^2-a+b-z_0+1.
\end{eqnarray}
By \eqref{equation3.8}, \eqref{equation3.9} and \eqref{equation3.10},
\begin{eqnarray*}
\sum_{x\in T}d_{G-T}(x)^2&\leq&\sum_{x\in T}(s+d_{G-S}(x))^2=\sum_{x\in T}(a+d_{G-S}(x))^2\\
&\leq&(3a-b+z_0)\sum_{x\in T}d_{G-S}(x)+(b+1-z_0)(a^2-a+b-z_0+1)+z_0a^2\\
&=&z_0^2-(2b-2a+3)z_0+a^2b+4a^2-2ab+b^2-4a+3b+1\\
&\leq& b^2+a^2b-4ab+3b+7a^2-7a+1\\
&=&(3a-b-1)^2+(b-a)(a+1)^2+a^3.
\end{eqnarray*}
Equality holds if and only if $z_0=a$, $(d_{G-S}(x)-1)(a-1-b+z_0-d_{G-S}(x))=0$ for every $x\in T\setminus Z_0$ and $E(G[T])=\emptyset$. Suppose that the equality holds.  Let $k$ be the number of vertices of $T\setminus Z_0$ that satisfy $d_{G-S}(x)=1$. The remaining $b+1-a-k$ vertices of $T\setminus Z_0$ satisfy $d_{G-S}(x)=2a-1-b$. Then $$\sum_{x\in T}d_{G-S}(x)=(b-2a+2)k-b^2-2a^2+3ab-2b+3a-1.$$ If $b=2a-2$, it follows from \eqref{equation3.9} that $z_0=a$, which implies that $|T\setminus Z_0|=a-1$. By \eqref{equation3.8}, $k=a-1=b-a+1$. If $b\leq 2a-3$, comparing with $\sum_{x\in T}d_{G-S}(x)=a-1$, we can obtain that $k=b-a$. Hence equality holds if and only if $E(G[T])=\emptyset$ and the sequence ($d_{G-S}(x)$: $x\in T$) consists of one entry $2a-b-1$, $b-a$ entries $1$ and $a$ entries $0$.
\end{proof}

\begin{corollary}\label{corollary3.1}
Every extremal graph $G$ satisfies $s\in\{a-1, a\}$.\\
(i) If $s=a-1$, then $E(G[T])=\emptyset$ and the sequence ($d_{G-S}(x)$: $x\in T$) consists of one entry $a-1$ and $b$ entries $1$.\\
(ii) If $s=a$, then $b\leq 2a-2$, $E(G[T])=\emptyset$ and the sequence ($d_{G-S}(x)$: $x\in T$) consists of one entry $2a-b-1$, $b-a$ entries $1$, and $a$ entries $0$.
\end{corollary}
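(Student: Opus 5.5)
The plan is to compare the extremal graph $G$ against the two candidate constructions by means of Lemma \ref{lemma2.7}. We then use the equality cases of Lemma \ref{lemma3.4} to pin down the degree sequences.

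By Lemmas \ref{lemma3.1}--\ref{lemma3.3}, $V(G)=C\cup T$ with $C=V(G)\setminus T$ a clique of size $c=n-b-1\geq (b+1)^2(2a-1)^2+b+2$, and $|T|=b+1$. For every $x\in T$ we have $d_{C}(x)=d_{G-T}(x)\leq s+d_{G-S}(x)\leq a+a-1=2a-1$.

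Lemma \ref{lemma2.7}, however, is stated for graphs in which $T$ is a set with the clique structure on $C$; it only needs $G[C]=K_c$, $|T|=b+1$, $d_C(x)\le 2a-1$ and connectivity. Edges inside $T$ are allowed, since $m_r$ includes $\boldsymbol{1}_{xy}$. So it applies to $G$ together with $B_{n,2}^{a,b}$, and, when $b\le 2a-2$, also with $B_{n,1}^{a,b}$. Both candidates lie in $\mathcal{B}_n(a,b)$ by Lemma \ref{lemma2.11}. Maximality of $\rho$ then gives $P(G)\geq P(B_{n,2}^{a,b})$, and $P(G)\ge P(B_{n,1}^{a,b})$ when $b\le 2a-2$, because otherwise Lemma \ref{lemma2.7} would give a strictly larger spectral radius.

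Next I compute the candidate values. In $B_{n,2}^{a,b}$ we have $s=a-1$, with one vertex of degree $2a-2$ into $C$ and $b$ vertices of degree $a$. Hence $P(B_{n,2}^{a,b})=(2a-2)^2+ba^2$, which is exactly the bound \eqref{equation3.6} at $s=a-1$. Similarly, $P(B_{n,1}^{a,b})$ equals the bound \eqref{equation3.7}.

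Now take $1\le s\le a-2$ (this case exists only for $a\ge3$). By \eqref{equation3.6},
\[
P(G)\le (a+s-1)^2+ba^2<(2a-2)^2+ba^2=P(B_{n,2}^{a,b}),
\]
which contradicts the inequality above. Hence $s\in\{a-1,a\}$; for $a=2$ this holds automatically, since $1\le s\le a$.

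If $s=a-1$, then $P(G)\ge P(B_{n,2}^{a,b})$ forces equality in \eqref{equation3.6}. The equality case of Lemma \ref{lemma3.4}(i) gives $E(G[T])=\emptyset$ and the sequence $(d_{G-S}(x):x\in T)$ consisting of one entry $a-1$ and $b$ entries $a-s=1$. This is (i).

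If $s=a$, Lemma \ref{lemma3.4}(ii) gives $b\le 2a-2$, so $B_{n,1}^{a,b}$ is available. Then $P(G)\ge P(B_{n,1}^{a,b})$ forces equality in \eqref{equation3.7}, and the equality characterization yields (ii).

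The main obstacle is checking the hypotheses of Lemma \ref{lemma2.7} for $G$. Connectivity holds because $s\ge1$ and $S$ is fully joined to everything. The other point is making sure the computed $P$ values of the candidates match the bounds exactly; these are routine counts from the constructions.
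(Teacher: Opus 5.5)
Your proof is correct and follows the paper's argument essentially step for step. You use Lemma \ref{lemma2.7} together with the membership of $B_{n,2}^{a,b}$ (and, when $b\le 2a-2$, of $B_{n,1}^{a,b}$) in $\mathcal{B}_n(a,b)$ to force $P(G)$ up to the candidate values, which rules out $s\le a-2$ and forces the equality cases of Lemma \ref{lemma3.4}; stating this as a lower bound $P(G)\ge P(B)$ rather than arguing by contradiction, as the paper does, is only a cosmetic difference.
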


\begin{proof}
By Lemma \ref{lemma3.3}, $1\leq s\leq a$. For $1\leq s\leq a-1$, the right side of \eqref{equation3.6} is strictly increasing in $s$. In the graph $B_{n,2}^{a,b}$, we take $S=\{v_1,v_2,\dots,v_{a-1}\}$ and $T=\{u_1,u_2,\dots,u_{b+1}\}$. Then we obtain $$P(B_{n,2}^{a,b})=(2a-2)^2+ba^2.$$ Now we claim that $s\geq a-1$. Suppose to the contrary that $s\leq a-2$. By Lemma \ref{lemma3.4}, $$P(G)\leq (a+s-1)^2+ba^2<(2a-2)^2+ba^2=P(B_{n,2}^{a,b}).$$ By Lemma \ref{lemma3.1}, $G[V(G)\setminus T]=K_{n-b-1}$, where $n-b-1\geq (b+1)^2(2a-1)^2+b+2$. Combining the definition of $T$ and $s\leq a-2$, we have $d_{G-T}(x)\leq d_{G-S}(x)+s< 2a-1$. By Lemmas \ref{lemma3.2} and \ref{lemma2.7}, we obtain $\rho< \rho(B_{n,2}^{a,b})$, contradicting the maximality of $\rho$. Hence $s\in \{a-1,a\}$.

If $s=a-1$, we assume to the contrary that the inequality \eqref{equation3.6} is strict. Then $$P(G)<(2a-2)^2+ba^2=P(B_{n,2}^{a,b}).$$ By Lemma \ref{lemma2.7}, $\rho<\rho(B_{n,2}^{a,b})$, contrary to the maximality of $\rho$. Then equality holds in Lemma \ref{lemma3.4}, and hence $E(G[T])=\emptyset$ and the sequence ($d_{G-S}(x)$: $x\in T$) consists of one entry $a-1$ and $b$ entries $1$.

If $s=a$, it follows from Lemma \ref{lemma3.4} that $b\leq 2a-2$. Then $B_{n,1}^{a,b}$ is defined. In the graph $B_{n,1}^{a,b}$, we take $S=\{v_1,v_2,\dots,v_a\}$ and $T=\{u_1,u_2,\dots,u_{b+1}\}$. Note that $$P(B_{n,1}^{a,b})=(3a-b-1)^2+(b-a)(a+1)^2+a^3.$$ Now we prove that the equality in Lemma \ref{lemma3.4} holds. Suppose to the contrary that the inequality in \eqref{equation3.7} is strict. Then we have $$P(G)<(3a-b-1)^2+(b-a)(a+1)^2+a^3=P(B_{n,1}^{a,b}).$$ By Lemma \ref{lemma2.7}, $\rho<\rho(B_{n,1}^{a,b})$, which contradicts the maximality of $\rho$. Hence $E(G[T])=\emptyset$ and the sequence ($d_{G-S}(x)$: $x\in T$) consists of one entry $2a-b-1$, $b-a$ entries $1$ and $a$ entries $0$.
\end{proof}

\begin{lem}\label{lemma3.5}
If $s=a$, then $G\cong B_{n,1}^{a,b}$. If $s=a-1$, then $G\cong B_{n,2}^{a,b}$.
\end{lem}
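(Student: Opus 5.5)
We argue the two cases separately. In both, the global structure is already fixed by Lemmas \ref{lemma3.1}, \ref{lemma3.2} and Corollary \ref{corollary3.1}. We have $V(G)=C\cup T$ with $C=S\cup R$, where $R=V(G)\setminus(S\cup T)$, $G[C]=K_{n-b-1}$, $T$ independent of size $b+1$, and $S$ joined to everything. The degree sequence $(d_R(x):x\in T)$ is also fixed. What remains is to determine how the edges between $R$ and $T$ are placed. For this we combine the 1-binding constraint, via Lemma \ref{lemma2.5} with $|C|\ge|T|+2$, with edge-switching arguments via Lemma \ref{lemma2.3} that preserve the family $\mathcal{B}_n(a,b)$.

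\emph{Case $s=a-1$.} Here one vertex $u_1\in T$ has $d_R(u_1)=a-1$ and every $v\in L=T\setminus\{u_1\}$ has $d_R(v)=1$. By Lemma \ref{lemma2.5}, $b(G)\ge1$ is equivalent to $|N_R(X)|\ge |X|-(a-1)$ for all $X\subseteq T$. By Lemma \ref{lemma2.8} this says exactly $|N_R(L)|\ge b-a+1$ and $|N_R(T)|\ge b-a+2$. So for $a\ge3$ we get $G\in\mathcal{G}_n^{a,b}(b-a+1)$, and all members of this family lie in $\mathcal{B}_n(a,b)$: they have no $[a,b]$-factor by the same $S$-count, and $\delta\ge a$. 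Hence $G$ maximizes $\rho$ in that family, and Lemma \ref{lemma2.10} gives $G\cong B_{n,2}^{a,b}$. The case $a=2$ is not covered by Lemma \ref{lemma2.10} and must be done directly. Now all vertices of $T$ have $d_R=1$, $|S|=1$, and the condition reads $|N_R(T)|\ge b$. Take $w_p\in N_R(T)$ with maximum Perron entry. Repeated applications of Lemma \ref{lemma2.3}, moving a $T$-neighbor from a vertex $w_j\neq w_p$ with $d_T(w_j)\ge2$ to $w_p$, force $|N_R(T)|=b$, with $w_p$ of $T$-degree $2$ and all other vertices of $N_R(T)$ of $T$-degree $1$. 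This is precisely $B_{n,2}^{2,b}$.

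\emph{Case $s=a$.} Here $b\le 2a-2$. The vertices of $T$ split as follows: one vertex $u_1$ with $d_R(u_1)=2a-b-1$, a set $P'$ of $b-a$ vertices with $d_R=1$, and a set $Z$ of $a$ vertices with $d_R=0$. (When $b=2a-2$ we have $2a-b-1=1$, so $u_1$ is simply a further degree-one vertex.) By Lemma \ref{lemma2.5} the binding condition becomes $|N_R(X\setminus Z)|\ge |X|-a$ for all $X$. Taking $X\supseteq Z$, this is equivalent to $|N_R(Y)|\ge |Y|$ for every $Y\subseteq T\setminus Z$. In other words, $P=T\setminus Z$ satisfies Hall's condition into $R$. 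Any such placement lies in $\mathcal{B}_n(a,b)$, so we may again use switchings.

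Let $w^*$ have maximum Perron entry among $N_R(P)$. Using Lemma \ref{lemma2.9}, vertices of $T$ with larger $R$-degree have larger Perron entries. We then switch edges so that the neighborhoods overlap as much as Hall's condition allows, as in the proof of Lemma \ref{lemma2.10}. This should force $|N_R(P)|=|P|$ when $u_1$ has degree at least $2$, so that Hall's condition is tight. It should also force the degree-one vertices of $P'$ to be matched injectively into $R$, with as many of them as possible landing inside $N_R(u_1)$, namely $\min\{b-a,\,2a-b-2\}$ of them, the rest going outside. The resulting configuration is exactly the adjacency pattern defining $B_{n,1}^{a,b}$.

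The main obstacle is this last step: showing that maximum overlap between $N_R(u_1)$ and $N_R(P')$ is optimal. We compare Perron entries of vertices in $N_R(u_1)$ against those outside it, as in the final paragraph of the proof of Lemma \ref{lemma2.10}. For $w\in N_R(u_1)\setminus N_R(P')$ and $w'\in N_R(P')\setminus N_R(u_1)$, we have $(\rho+1)(x_w-x_{w'})=x_{u_1}-x_{v}>0$ by Lemma \ref{lemma2.9}, where $v$ is the $T$-neighbor of $w'$. Moving the edge $vw'$ to $vw$ keeps Hall's condition valid as long as the overlap stays below $\min\{b-a,2a-b-2\}$. Checking this is where the boundary cases need care.
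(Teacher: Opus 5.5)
Your proposal follows the paper's proof essentially step by step. Lemma \ref{lemma2.5} turns 1-binding into a Hall-type condition on $T$. For $s=a-1$, $a\ge 3$, Lemma \ref{lemma2.8} places $G$ in $\mathcal{G}_n^{a,b}(b-a+1)$ and Lemma \ref{lemma2.10} finishes. The case $a=2$ is a direct switching argument. For $s=a$ the decisive step is the same overlap-increasing switch, driven by $(\rho+1)(x_w-x_{w'})=x_{u_1}-x_v>0$ from Lemma \ref{lemma2.9}. Three points in the sketch need correcting, although none changes the strategy.

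\textbf{The claim $|N_R(P)|=|P|$ is false.} In the case $s=a$ you say switching should force $|N_R(P)|=|P|$. Since $d_R(u_1)=2a-b-1$ is fixed, $|N_R(P)|\ge 2a-b-1$, and this exceeds $|P|=b-a+1$ whenever $2b<3a-2$. For $a=5$, $b=6$ the target graph $B_{n,1}^{5,6}$ itself has $|N_R(P)|=3>2=|P|$. The claim also contradicts your own final configuration: Hall's condition is tight only when the optimal overlap equals $2a-b-2$. Drop it, together with $w^*$, which plays no role because no two vertices of $P'$ may share a neighbour. What you need comes straight from the binding condition, with no switching:
\begin{itemize}
\item Hall's condition on pairs in $P'$ forces the vertices of $P'$ to have pairwise distinct neighbours.
\item Hall's condition on $P$ gives $N_R(u_1)\not\subseteq Q:=N_R(P')$, hence $|N_R(u_1)\cap Q|\le\min\{b-a,2a-b-2\}$.
\end{itemize}
The only extremal input is that this bound is attained.

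\textbf{The deferred check is routine.} After replacing $vw'$ by $vw$ with $w\in N_R(u_1)\setminus Q$, the vertices of $P'$ still have distinct neighbours, since $w\notin Q$. The new set $Q'$ satisfies $|N_R(u_1)\setminus Q'|\ge (2a-b-1)-(2a-b-2)=1$, so $|N_R(Y)|\ge|Y|$ persists for every $Y\ni u_1$. Degrees in $T$ are unchanged, so $G'\in\mathcal{B}_n(a,b)$. No boundary case arises: when $b=2a-2$ the bound is $0$ and there is nothing to switch.

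\textbf{The case $a=2$.} If $|N_R(T)|=b+1$, no $w_j$ has $d_T(w_j)\ge 2$, so the move you describe cannot exclude this case. Instead, move the unique $T$-neighbour of an arbitrary $w_j\ne w_p$ onto $w_p$, as the paper does. This gives $|N_R(T)|=b$ and keeps the graph 1-binding. Once $|N_R(T)|=b$, the isomorphism type is already $B_{n,2}^{2,b}$, so locating the doubled vertex at $w_p$ is unnecessary.
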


\begin{proof}
By Lemma \ref{lemma3.2}, $t=b+1$. Let $T=\{u_1,u_2,\dots,u_{b+1}\}$, $S=\{v_1,v_2,\dots,v_s\}$ and $V(G)-S-T=\{w_1,w_2,\dots,w_{n-s-b-1}\}$. By Lemma \ref{lemma3.1}, $d_{G}(x)=n-1$ for every $x\in S$ and $s\geq 1$, and hence $G$ is connected. Let $\boldsymbol{x}=(x_u)_{u\in V(G)}$ be the positive Perron vector of $A(G)$ such that $\sum_{v\in V(G)}x_v^2=1$. Without loss of generality, assume that $x_{u_1}\geq x_{u_2}\geq \cdots\geq x_{u_{b+1}}$, $x_{v_1}\geq x_{v_2}\geq \cdots\geq x_{v_s}$ and $x_{w_1}\geq x_{w_2}\geq \cdots\geq x_{w_{n-s-b-1}}$.

First assume $s=a$. By Corollary \ref{corollary3.1}, we have $b\leq 2a-2$, $E(G[T])=\emptyset$ and the sequence ($d_{G-S}(x)$: $x\in T$) consists of one entry $2a-b-1$, $b-a$ entries $1$ and $a$ entries $0$. Since $b\leq 2a-2$, we have $2a-b-1\geq 1$. By $x_{u_1}\geq x_{u_2}\geq \cdots\geq x_{u_{b+1}}$, we have $d_{G-S}(u_{b-a+2})=\cdots=d_{G-S}(u_{b+1})=0$. By symmetry, we have $x_{u_{b-a+2}}=\cdots=x_{u_{b+1}}$. Let $Z=\{u_{b-a+2},\dots,u_{b+1}\}.$ By $b(G)\geq 1$, we can deduce that for any subset $X\subseteq \{u_1,u_2,\dots,u_{b-a+1}\}$,
\begin{eqnarray}\label{equation3.11}
|N_{G-S}(X)|=|N_{G-S}(X\cup Z)|\geq |X\cup Z|-a=|X|.
\end{eqnarray}
If $2a-b-1=1$, it follows from \eqref{equation3.11} that $|N_{G-S}(u_i)|=1$ $(1\leq i
\leq b-a+1)$ and $N_{G-S}(u_i)\cap N_{G-S}(u_j)=\emptyset$ $(1\leq i<j\leq b-a+1)$. By the maximality of $\rho$,
\[N_{G-S}(u_{i})=
\begin{cases}
\{w_i\},&1\leq i\leq b-a+1,\\
\emptyset,&b-a+2\leq i\leq b+1,
\end{cases}
\]
and hence $G\cong B_{n,1}^{a,b}$. Now suppose that $2a-b-1\geq 2$ and $d_{G-S}(u)=2a-b-1$ $(u\in T\setminus Z)$. According to Lemma \ref{lemma2.9}, $x_u>x_{u'}$ for any $u'\in T\setminus (Z\cup\{u\})$. By $A(G)\boldsymbol{x}=\rho\boldsymbol{x}$, for any $u'\in T\setminus Z$ and $u''\in Z$, $$\rho(x_{u'}-x_{u''})=\sum_{w\in N_{G-S}(u')}x_w>0,$$ which implies that  $d_{G-S}(u_1)=2a-b-1$ and $d_{G-S}(u_i)=1$ where $2\leq i\leq b-a+1$. For any $X\subseteq T\setminus Z$, it follows from $b(G)\geq 1$ and Lemma \ref{lemma2.5} that $a+|N_{G-S}(X)|=|N_G(X\cup Z)|\geq |X|+a,$ which implies that
\begin{eqnarray}\label{equation3.12}
|N_{G-S}(X)|\geq |X|.
\end{eqnarray}
Hence $N_{G-S}(u_i)\cap N_{G-S}(u_j)=\emptyset$ $(2\leq i<j\leq b-a+1)$. Let $Q=\cup_{i=2}^{b-a+1}N_{G-S}(u_i)$. By \eqref{equation3.12}, $N_{G-S}(u_1)\setminus Q\neq \emptyset$, implying $|N_{G-S}(u_1)\cap Q|\leq \min\{2a-b-2,b-a\}$. Next we prove that $|N_{G-S}(u_1)\cap Q|= \min\{2a-b-2,b-a\}$. Suppose to the contrary that $|N_{G-S}(u_1)\cap Q|< \min\{2a-b-2,b-a\}$. Since $|N_{G-S}(u_1)\cap Q|<2a-b-2$ and $|N_{G-S}(u_1)\cap Q|<b-a$, there exist $w_j\in Q\setminus N_{G-S}(u_1)$ and $w_l\in N_{G-S}(u_1)\setminus Q$. Suppose $N_{G-S}(u_i)=\{w_j\}$ where $2\leq i\leq b-a+1$. By $A(G)\boldsymbol{x}=\rho\boldsymbol{x}$, $$(\rho+1)(x_{w_l}-x_{w_j})=x_{u_1}-x_{u_i}>0,$$ implying $x_{w_l}>x_{w_j}$. Construct $G'=G-u_iw_j+u_iw_l$. For every $\emptyset\neq X\subseteq T$, $$|N_{G'}(X)|=a+|N_{G'-S}(X\setminus Z)|\geq a+|X\setminus Z|\geq |X|.$$
It follows from Lemma \ref{lemma2.5} that $b(G')\geq 1$. It is easy to check that $\delta(G')\geq a$ and $G'$ has no $[a,b]$-factor. So $G'\in \mathcal{B}_n(a,b).$ By $x_{w_l}>x_{w_j}$ and Lemma \ref{lemma2.3}, $\rho<\rho(G')$, contradicting the maximality of $\rho$. Then $|N_{G-S}(u_1)\cap Q|= \min\{2a-b-2,b-a\}$. By the maximality of $\rho$, we can deduce that
\[N_{G-S}(u_1)=\{w_1,w_2,\dots,w_{2a-b-1}\},\]
\[N_{G-S}(u_i)=
\begin{cases}
\{w_{i-1}\},&2\leq i\leq |N_{G-S}(u_1)\cap Q|+1,\\
\{w_{2a-b+i-|N_{G-S}(u_1)\cap Q|-2}\},&|N_{G-S}(u_1)\cap Q|+2\leq i\leq b-a+1,\\
\emptyset,&b-a+2\leq i\leq b+1.
\end{cases}
\]
Hence $G\cong B_{n,1}^{a,b}$.

Next assume $s=a-1$. By Corollary \ref{corollary3.1}, $E(G[T])=\emptyset$ and the sequence ($d_{G-S}(x)$: $x\in T$) consists of one entry $a-1$ and $b$ entries $1$. First suppose that $a=2$. Observe that every vertex $u\in T$ satisfies $|N_{G-S}(u)|=1$. Since $b(G)\geq 1$, we have $\max_{X\subseteq T}(|X|-|N_{G-S}(X)|)=b+1-|N_{G-S}(T)|\leq 1,$ which implies
\begin{eqnarray*}
b\leq |N_{G-S}(T)|\leq b+1.
\end{eqnarray*}
Suppose to the contrary that $|N_{G-S}(T)|= b+1$. Then there exist two vertices $u_i,u_j\in T$ such that $N_{G-S}(u_i)=\{w_k\}$ and $N_{G-S}(u_j)=\{w_l\}$ where $i\neq j$ and $k<l$. Let $G'=G-u_jw_l+u_jw_k$. Observe that $\delta(G')\geq a$ and $G'$ satisfies \eqref{equation1.1}. By Corollary \ref{corollary2.1}, $G'$ has no $[a,b]$-factor. For any $\emptyset \neq X\subseteq T$, $|N_{G'}(X)|=|N_{G'-S}(X)|+1\geq |X|$. According to Lemma \ref{lemma2.5}, $b(G')\geq 1$, and hence $G'\in \mathcal{B}_n(a,b)$. Combining $x_{w_k}\geq x_{w_l}$ $(k<l)$ and Lemma \ref{lemma2.3}, $\rho<\rho(G')$, contrary to the maximality of $\rho$. Therefore $|N_{G-S}(T)|= b$. Suppose that $N_{G-S}(u_i)=N_{G-S}(u_j)=\{w_p\}$ where $u_i,u_j\in T$ and $i\neq j$. For any vertex $u_l\in T\setminus \{u_i,u_j\}$, there exists a vertex $w_q$ $(p\neq q)$ such that $N_{G-S}(u_l)=\{w_q\}$. By symmetry, $x_{u_i}=x_{u_j}$. By $A(G)\boldsymbol{x}=\rho \boldsymbol{x}$, we have $$[\rho(\rho+1)-1](x_{u_i}-x_{u_l})=x_{u_i}>0,$$ and hence $$(\rho+1)(x_{w_p}-x_{w_q})=x_{u_i}+(x_{u_i}-x_{u_l})>0.$$ Then we obtain that $x_{u_i}=x_{u_j}>x_{u_l}$ and $x_{w_p}>x_{w_q}$. Since $u_l$ is arbitrary, we have $N_{G-S}(u_1)=N_{G-S}(u_2)=\{w_1\}$. By the maximality of $\rho$, $N_{G-S}(u_i)=\{w_{i-1}\}$ $(3\leq i\leq b+1)$, and hence $G\cong B_{n,2}^{2,b}$. Assume $a\geq 3$. Then $d_{G-S}(u_1)=a-1\geq2$ and $d_{G-S}(u_i)=1$ where $2\leq i\leq b+1$. By Lemma \ref{lemma2.9}, for every $2\leq i\leq b+1$,
\begin{eqnarray*}
x_{u_1}>x_{u_i}.
\end{eqnarray*}
Let $Q=\cup_{i=2}^{b+1}N_{G-S}(u_i).$ Since $b(G)\geq 1$, we have
\begin{eqnarray*}
|Q|\geq b-a+1,\quad |N_{G-S}(T)|\geq b-a+2,
\end{eqnarray*}
and then $G\in \mathcal{G}_n^{a,b}(b-a+1)$. By Lemma \ref{lemma2.10}, $B_{n,2}^{a,b}$ is the unique graph with maximum spectral radius in $\mathcal{G}_n^{a,b}(b-a+1)$. It follows from Lemma \ref{lemma2.11} that $G\cong B_{n,2}^{a,b}$.
\end{proof}

Now we are ready to provide the proof of Theorem \ref{theorem1.4}.

\medskip
\noindent  \textbf{Proof of Theorem \ref{theorem1.4}.}
By Lemma \ref{lemma2.11}, $B_{n,2}^{a,b}$ belongs to $\mathcal{B}_n(a,b)$ and $B_{n,1}^{a,b}$ also belongs to $\mathcal{B}_n(a,b)$ when $b\leq 2a-2$. By Corollary \ref{corollary3.1} and Lemma \ref{lemma3.5}, we have $G\cong B_{n,2}^{a,b}$ when $s=a-1$ and $G\cong B_{n,1}^{a,b}$ when $s=a$, $b\leq 2a-2$. If $b>2a-2$, then we prove $s=a-1$. Suppose to the contrary that $s=a$. By Lemma \ref{lemma3.3}, $\sum_{x\in T}d_{G-S}(x)=a-1$, which implies
\begin{eqnarray}\label{equation3.13}
|N_{G-S}(T)|\leq \sum_{x\in T}d_{G-S}(x)=a-1.
\end{eqnarray}
Since $b(G)\geq 1$, we have
\begin{eqnarray}\label{equation3.14}
|N_{G-S}(T)|\geq |T|-s=b-a+1.
\end{eqnarray}
Combining \eqref{equation3.13} and \eqref{equation3.14}, we obtain $b\leq 2a-2$, which contradicts $b>2a-2$. Then $s=a-1$, and hence $G\cong B_{n,2}^{a,b}$. Now assume that $b\leq 2a-2$. Recall the definitions of $B_{n,1}^{a,b}$ and $B_{n,2}^{a,b}$. Combining $P(B_{n,1}^{a,b})=(3a-b-1)^2+(b-a)(a+1)^2+a^3$ and $P(B_{n,2}^{a,b})=(2a-2)^2+ba^2$, we obtain that
\begin{eqnarray}\label{equation3.15}
P(B_{n,1}^{a,b})-P(B_{n,2}^{a,b})=b^2+3a^2-4ab+3b+a-3\equiv 1 (\rm{mod}~2),
\end{eqnarray}
and hence $b^2+3a^2-4ab+3b+a-3\neq 0$. Assume that $b^2+3a^2-4ab+3b+a-3>0$. By \eqref{equation3.15} and Lemma \ref{lemma2.7}, $\rho(B_{n,1}^{a,b})>\rho(B_{n,2}^{a,b})$. If $b^2+3a^2-4ab+3b+a-3<0$, then it follows from \eqref{equation3.15} that $P(B_{n,1}^{a,b})<P(B_{n,2}^{a,b})$. By Lemma \ref{lemma2.7}, $\rho(B_{n,1}^{a,b})<\rho(B_{n,2}^{a,b})$. Therefore $\rho(B_{n,1}^{a,b})>\rho(B_{n,2}^{a,b})$ holds if and only if $3a^2-4ab+a+b^2+3b-3>0$. By the maximality of $\rho$, it follows that $G\cong B_{n,1}^{a,b}$ when $3a^2-4ab+a+b^2+3b-3>0$ and $G\cong B_{n,2}^{a,b}$ otherwise. Hence $G\cong \mathcal{B}_n^{a,b}$.
\hspace*{\fill}$\Box$

\section{Proof of Theorem \ref{theorem1.7}}
Assume that $a$, $b$, and $n$ are three positive integers such that $2\leq a<b$ and $n\geq (b+1)^2(2a-1)^2+2b+3$. Suppose that $G^*$ has the maximum spectral radius among graphs in $\mathcal{T}_n(a,b)$. For convenience, we use $\delta^*$ instead of $\delta(G^*)$ and write $\rho(G^*)$ as $\rho^*$ for short. By Lemma \ref{lemma2.2}, there exists $S\subseteq V(G^*)$ such that
\begin{eqnarray}\label{equation4.1}
\sum_{x\in T}d_{G^*-S}(x)\leq at-bs-1,
\end{eqnarray}
where $T=\{x\in V(G^*)\setminus S\mid d_{G^*-S}(x)<a \}$, $|S|=s$ and $|T|=t$.

Combining $\delta^*\geq a$ and \eqref{equation4.1}, we have $s(t-b)\geq 1$, which implies that $s\geq 1$ and $t\geq b+1$. By Lemma \ref{lemma2.4}, we know that adding edges preserves the 1-tough property of
a graph. Furthermore, construct a new graph $G'$ from $G^*$ by adding edges such that $G'[V(G')\setminus T]=K_{n-t}$ and $d_{G'}(x)=n-1$ for every $x\in S$. Clearly, $G'$ satisfies \eqref{equation4.1} and $\delta(G')\geq a$. By the same arguments, Lemma \ref{lemma3.1} still holds, and hence $G^*[V(G^*)\setminus T]$ is complete and $d_{G^*}(x)=n-1$ for every $x\in S$. By Lemmas \ref{lemma2.1} and \ref{lemma2.12}, $\rho^*\geq \rho({T}_{n,2}^{a,b})>\rho(K_{n-b-1})=n-b-2$. Combining this with Lemma \ref{lemma3.1} and \eqref{equation4.1}, Lemma \ref{lemma3.2} still holds. Hence we obtain
\begin{eqnarray}\label{equation4.2}
t=b+1.
\end{eqnarray}
Let $T=\{u_1,\dots,u_{b+1}\}$. Since $d_{G^*}(x)=n-1$ for every $x\in S$ and $s\geq 1$, $G^*$ is connected. Let $\boldsymbol{x}=(x_v)_{v\in V(G^*)}$ be a positive Perron vector of $A(G^*)$ such that $\sum_{v\in V(G^*)}x_v^2=1$. Without loss of generality, assume that $x_{u_1}\geq \cdots\geq x_{u_{b+1}}$. According to $0\leq \sum_{x\in T}d_{G^*-S}(x)\leq at-bs-1$, we have $s\leq \frac{a}{b}t-\frac{1}{b}=a+\frac{a-1}{b}<a+1,$ implying
\begin{eqnarray}\label{equation4.3}
1\leq s\leq a.
\end{eqnarray}
It follows from \eqref{equation4.1} and \eqref{equation4.2} that $\sum_{x\in T}d_{G^*-S}(x)\leq a(b+1)-bs-1$. Now we claim that $\sum_{x\in T}d_{G^*-S}(x)=a(b+1)-bs-1.$ Suppose to the contrary that $\sum_{x\in T}d_{G^*-S}(x)\leq a(b+1)-bs-2$. By \eqref{equation4.2}, \eqref{equation4.3} and $n\geq (b+1)^2(2a-1)^2+2b+3$, there exist $u\in T$, $v\in V(G^*)\setminus (S\cup T)$ such that $d_{G^*-S}(u)\leq a-2$ and $uv\notin E(G^*)$. Let $G'=G^*+uv$. Clearly, $G'$ satisfies \eqref{equation1.1} and hence $G'$ has no $[a,b]$-factor. By Lemma \ref{lemma2.4}, $t(G')\geq t(G^*)\geq 1$. Note that $\delta(G')\geq a$. So $G'\in \mathcal{T}_n(a,b)$. By Lemma \ref{lemma2.1}, $\rho(G')>\rho^*$, contrary to the maximality of $\rho^*$. Therefore
\begin{eqnarray}\label{equation4.4}
\sum_{x\in T}d_{G^*-S}(x)=a(b+1)-bs-1.
\end{eqnarray}
Define $P(G^*)=\sum_{x\in T}d_{G^*-T}(x)^2$.

We first prove the following lemma, which plays an important role in characterizing the specific structure of $G^*$.

\begin{lem}\label{lemma4.1}
$s\in \{a-1,a\}$. If $s=a-1$, $G^*\cong {T}_{n,2}^{a,b}$. If $s=a$, then $b\leq 2a-3$ and $G^*\cong {T}_{n,1}^{a,b}$.
\end{lem}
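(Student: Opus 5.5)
We follow the route of Section 3, with Lemma \ref{lemma2.6} taking the place of Lemma \ref{lemma2.5}. The plan has three stages: bound $P(G^*)$ in terms of $s$ (the analogue of Lemma \ref{lemma3.4}), use Lemma \ref{lemma2.7} to force $s\in\{a-1,a\}$ and to force equality in that bound, and then determine the edges between $T$ and $R=V(G^*)\setminus(S\cup T)$ by edge switching (Lemma \ref{lemma2.3}).

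\emph{Stage 1: the bound on $P(G^*)$.} By \eqref{equation4.2} and \eqref{equation4.4}, the counting part of Lemma \ref{lemma3.4}(i) carries over verbatim. For $1\le s\le a-1$ it gives
$$P(G^*)\le (a+s-1)^2+ba^2,$$
with equality exactly when $E(G^*[T])=\emptyset$ and $(d_{G^*-S}(x):x\in T)$ is one entry $a-1$ together with $b$ entries $a-s$.

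\emph{Stage 2: reducing to $s\in\{a-1,a\}$.} In $T_{n,2}^{a,b}$, taking $S=S_2$, we have $P(T_{n,2}^{a,b})=(2a-2)^2+ba^2$, which is the bound at $s=a-1$. If $s\le a-2$, then $d_{C}(x)\le s+d_{G^*-S}(x)<2a-1$ for every $x\in T$, and $P(G^*)<P(T_{n,2}^{a,b})$. Since $T_{n,2}^{a,b}\in\mathcal T_n(a,b)$ by Lemma \ref{lemma2.12}, Lemma \ref{lemma2.7} gives $\rho^*<\rho(T_{n,2}^{a,b})$, a contradiction. The same comparison at $s=a-1$ forces equality. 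So $E(G^*[T])=\emptyset$, $d_{G^*-S}(u_1)=a-1$, and $d_{G^*-S}(u_i)=1$ for $i\ge 2$.

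\emph{Stage 3a: the case $s=a-1$.} Here $V(G^*)=C\cup T$ with $C=S\cup R$ a clique and $|C|\ge |T|+2$, so Lemma \ref{lemma2.6} applies. Writing $L=T\setminus\{u_1\}$, 1-toughness gives $|N_C(X)|\ge|X|+1$ for all nonempty $X\subseteq T$, which is equivalent to $|X|-|N_R(X)|\le a-2$.

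For $a\ge3$, Lemma \ref{lemma2.8} turns this into $|N_R(L)|\ge b-a+2$ and $|N_R(T)|\ge b-a+3$. Hence $G^*\in\mathcal G_n^{a,b}(b-a+2)$, and Lemma \ref{lemma2.10} gives $G^*\cong T_{n,2}^{a,b}$.

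For $a=2$, every vertex of $T$ has exactly one neighbour in $R$, and the condition reads $|N_R(X)|\ge|X|$. Taking $X=T$ forces the $b+1$ neighbours to be distinct, so $G^*\cong T_{n,2}^{2,b}$ directly.

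\emph{Stage 3b: the case $s=a$, which is the main obstacle.} By \eqref{equation4.4}, $\sum_{x\in T}d_{G^*-S}(x)=a-1$. Let $Z_0=\{x\in T: d_{G^*-S}(x)=0\}$ and $z_0=|Z_0|$. Lemma \ref{lemma2.6} applied to $Z_0$ gives $z_0+1\le |N_C(Z_0)|= a$, so $z_0\le a-1$. Hence at least $b+2-a$ vertices of $T$ have positive degree into $R$, and therefore $b+2-a\le a-1$, that is, $b\le 2a-3$.

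We then redo the quadratic estimate of Lemma \ref{lemma3.4}(ii) with the range $b-a+3\le z_0\le a-1$. We expect the maximum at $z_0=a-1$, attained by the degree sequence consisting of one entry $2a-b-2$, $b-a+1$ entries $1$ and $a-1$ entries $0$, which is the degree sequence of $T_{n,1}^{a,b}$. Lemma \ref{lemma2.7} against $T_{n,1}^{a,b}$ (which lies in $\mathcal T_n(a,b)$ by Lemma \ref{lemma2.12}) then forces equality. Checking that this quadratic really peaks at the endpoint $z_0=a-1$ is the step we expect to need the most care.

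Finally we pin down the structure. Let $Z$ be the set of $a-1$ vertices of $T$ with no neighbour in $R$. Lemma \ref{lemma2.6} gives $|N_R(X)|\ge|X|$ for every $X\subseteq T\setminus Z$. Exactly as in Lemma \ref{lemma3.5}, Lemma \ref{lemma2.9} shows that $u_1$ is the vertex of degree $2a-b-2$ when $2a-b-2\ge 2$, and the neighbourhoods of the degree-one vertices are pairwise disjoint. A switching argument via Lemma \ref{lemma2.3} maximizes their overlap with $N_R(u_1)$, giving overlap $\min\{2a-b-3,b-a+1\}$. Each switch must be checked to preserve 1-toughness through Lemma \ref{lemma2.6}; this holds because $|N_C(X)|=a+|N_R(X\setminus Z)|\ge |X|+1$ still holds after the switch. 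The resulting graph is $G^*\cong T_{n,1}^{a,b}$.
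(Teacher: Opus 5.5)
Your outline follows the paper's proof of this lemma. You use Lemma \ref{lemma2.7} against $T_{n,2}^{a,b}$ to get $s\in\{a-1,a\}$ and equality in the bound on $P(G^*)$. For $s=a-1$ you combine Lemmas \ref{lemma2.6}, \ref{lemma2.8} and \ref{lemma2.10}. For $s=a$ you use toughness to get $z_0\le a-1$, a $P$-estimate to force the degree sequence of $T_{n,1}^{a,b}$, and the overlap switching.

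\textbf{The open step in Stage 3b.} The step you flagged as open is also misstated. The range is $b-a+2\le z_0\le a-1$, not $b-a+3\le z_0\le a-1$, because the degree sum $a-1$ only forces $z_0\ge b+1-(a-1)$. With your range, the case $z_0=b-a+2$ is never bounded. Worse, for $b=2a-3$ your range is empty, although $z_0=a-1=b-a+2$ is exactly the value in $T_{n,1}^{a,b}$.

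With the correct range the check is short. The quadratic from Lemma \ref{lemma3.4}(ii), $Q(z_0)=z_0^2-(2b-2a+3)z_0+\mathrm{const}$, is convex, and $Q(a-1)-Q(b-a+2)=(2a-b-3)(2a-b-2)\ge 0$. So the maximum is at $z_0=a-1$, and $Q$ is strictly smaller for $z_0<a-1$.

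The paper avoids the quadratic entirely. Once $z_0\le a-1$ is known, every positive degree satisfies $d\le a-1-(b-z_0)\le 2a-b-2$. Then $(d-1)(2a-b-2-d)\ge 0$ gives $P(G^*)\le P(T_{n,1}^{a,b})-(2a-b-2)(a-1-z_0)$. This bound is linear in $z_0$ and is attained only at $z_0=a-1$.

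\textbf{Two smaller gaps.}

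\emph{Applying Lemma \ref{lemma2.10}.} Before invoking it you need $G^*$ to maximize $\rho$ over $\mathcal{G}_n^{a,b}(b-a+2)$. The paper proves $\mathcal{G}_n^{a,b}(b-a+2)\subseteq\mathcal{T}_n(a,b)$ via Lemmas \ref{lemma2.6}, \ref{lemma2.8} and Corollary \ref{corollary2.1}. Alternatively, Lemma \ref{lemma2.10} makes $T_{n,2}^{a,b}$ the unique maximizer over that family, and $\rho^*\ge\rho(T_{n,2}^{a,b})$ holds by Lemma \ref{lemma2.12}; together these force $G^*\cong T_{n,2}^{a,b}$.

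\emph{Bounding $z_0$.} You cannot cite Lemma \ref{lemma2.6} there, since $E(G^*[T])=\emptyset$ is not yet known for $s=a$. Argue directly instead: $G^*-S$ contains the $z_0$ isolated vertices of $Z_0$ plus the component containing $R$, so $a=s\ge z_0+1$.
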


\begin{proof}
By \eqref{equation4.3}, $1\leq s\leq a$. According to $\delta^*\geq a$ and the definition of $T$, we have $a-s\leq d_{G^*-S}(u)\leq a-1$ for every $u\in T$. Assume to the contrary that $s\leq a-2$. By \eqref{equation4.4} and the calculation in the proof of Lemma \ref{lemma3.4}(i), we have
\begin{eqnarray*}
P(G^*)\leq (a+s-1)^2+ba^2< (2a-2)^2+ba^2=P({T}_{n,2}^{a,b}).
\end{eqnarray*}
Combining Lemmas \ref{lemma2.7} and \ref{lemma2.12}, we have $\rho^*<\rho({T}_{n,2}^{a,b})$, which contradicts the maximality of $\rho^*$. Therefore $s\in \{a-1,a\}$.

Suppose $s=a-1$. Let $S=\{v_1,\dots,v_{a-1}\}$ and $V(G^*)\setminus (S\cup T)=\{w_1,\dots,w_{n-a-b}\}$. Without loss of generality, assume that $x_{v_1}\geq \cdots\geq x_{v_{a-1}}$ and $x_{w_1}\geq \cdots\geq x_{w_{n-a-b}}$. Note that
\begin{eqnarray}\label{equation4.5}
P(G^*)\leq(2a-2)^2+ba^2=P({T}_{n,2}^{a,b}).
\end{eqnarray}
By the maximality of $\rho^*$ and Lemma \ref{lemma2.7}, equality holds in \eqref{equation4.5}. By the same arguments, Lemma \ref{lemma3.4}(i) still holds, and hence $E(G^*[T])=\emptyset$ and the sequence $(d_{G^*-S}(x):x\in T)$ consists of one entry $a-1$ and $b$ entries $1$. If $a=2$, then $d_{G^*-S}(u)=1$ for every $u\in T$. Since $G^*[V(G^*)\setminus T]=K_{n-b-1}$ and $n-b-1\geq t+2$, it follows from Lemma \ref{lemma2.6} that for any $\emptyset \neq X\subseteq T$, $|N_{G^*-S}(X)|\geq |X|$. Then $G^* \cong T_{n,2}^{2,b}$. If $a\geq 3$, then there exists a unique vertex $u\in T$ such that $d_{G^*-S}(u)\geq 2$ and every vertex $u'\in T\setminus \{u\}$ satisfies $d_{G^*-S}(u')=1$. By Lemma \ref{lemma2.9}, $x_u>x_{u'}$, which implies that $u=u_1$. Hence $d_{G^*-S}(u_1)=a-1$ and $d_{G^*-S}(u_i)=1$ where $2\leq i\leq b+1$. Since $G^*[V(G^*)\setminus T]=K_{n-b-1}$ and $n-b-1\geq t+2$, it follows from Lemma \ref{lemma2.6} that for any $\emptyset \neq X\subseteq T$, $a-1+|N_{G^*-S}(X)|\geq |X|+1$. Let $L=T\setminus \{u_1\}$. By Lemma \ref{lemma2.8}, we have $|N_{G^*-S}(L)|\geq b-a+2$ and $|N_{G^*-S}(T)|\geq b-a+3$. So $G^*\in \mathcal{G}_n^{a,b}(b-a+2)$. Suppose that $G\in \mathcal{G}_n^{a,b}(b-a+2)$. By the definition of $\mathcal{G}_n^{a,b}(b-a+2)$, Lemmas \ref{lemma2.6} and \ref{lemma2.8}, $t(G)\geq 1$. Observe that $\delta(G)\geq a$ and $G$ satisfies \eqref{equation1.1}. It follows from Corollary \ref{corollary2.1} that $G$ has no $[a,b]$-factor, and then $G\in \mathcal{T}_n(a,b)$. Hence $\mathcal{G}_n^{a,b}(b-a+2)\subseteq \mathcal{T}_n(a,b)$. By the maximality of $\rho^*$ and Lemma \ref{lemma2.10}, $G^*\cong {T}_{n,2}^{a,b}$.

If $s=a$, let $Z=\{x\in T: d_{G^*-S}(x)=0\}$. Let $S=\{v_1,\dots,v_{a}\}$ and $V(G^*)\setminus (S\cup T)=\{w_1,\dots,w_{n-a-b-1}\}$. Without loss of generality, assume that $x_{v_1}\geq \cdots\geq x_{v_{a}}$ and $x_{w_1}\geq \cdots\geq x_{w_{n-a-b-1}}$. It follows from \eqref{equation4.4} that $\sum_{x\in T}d_{G^*-S}(x)=a-1$, and then $|Z|\geq b-a+2>0$. By $t(G^*)\geq 1$ and the definition of $Z$, we have $b-a+2\leq |Z|\leq a-1$ implying $b\leq 2a-3$. For any $u\in T\setminus Z$, we have $$1\leq d_{G^*-S}(u)\leq a-1-(b-|Z|)\leq 2a-b-2,$$ and then $[d_{G^*-S}(u)-1][(2a-b-2)-d_{G^*-S}(u)]\geq 0$ implying $$d_{G^*-S}(u)^2\leq (2a-b-1)d_{G^*-S}(u)-(2a-b-2).$$ Observe that $P({T}_{n,1}^{a,b})=(3a-b-2)^2+(b-a+1)(a+1)^2+(a-1)a^2$. Then
\begin{eqnarray}\label{equation4.6}
P(G^*)&\leq& \sum_{x\in T}(a+d_{G^*-S}(x))^2\nonumber\\
&\leq&(b+1)a^2+(4a-b-1)(a-1)-(2a-b-2)(b+1-|Z|)\nonumber\\
&=&P({T}_{n,1}^{a,b})-(2a-b-2)(a-1-|Z|)\leq P({T}_{n,1}^{a,b}).
\end{eqnarray}
By Lemmas \ref{lemma2.7}, \ref{lemma2.12} and the maximality of $\rho^*$, the equality holds in \eqref{equation4.6}. Hence $E(G^*[T])=\emptyset$, $|Z|=a-1$ and $d_{G^*-S}(u)\in \{1, 2a-b-2\}$ for every $u\in T\setminus Z$. Let $k$ be the number of vertices $u\in T\setminus Z$ such that $d_{G^*-S}(u)=2a-b-2$. Since $|T\setminus Z|=b-a+2$ and $\sum_{x\in T}d_{G^*-S}(x)=a-1$, we have $$k(2a-b-2)+(b-a+2-k)=a-1.$$ If $2a-b-3=0$, then $|T\setminus Z|=b-a+2=a-1$. Then $k=a-1$, and hence $d_{G^*-S}(u_i)=2a-b-2=1$, where $1\leq i\leq b-a+2$. If $2a-b-3\geq 1$, then $k=1$. Suppose that $d_{G^*-S}(u)=2a-b-2\geq 2$ $(u\in T)$. By Lemma \ref{lemma2.9}, $x_u>x_{u'}$ for any $u'\in T\setminus (Z\cup\{u\})$. According to $A(G^*)\boldsymbol{x}=\rho^*\boldsymbol{x}$, for any $u'\in T\setminus Z$ and $u''\in Z$, $$\rho^*(x_{u'}-x_{u''})=\sum_{w\in N_{G^*-S}(u')}x_w>0,$$ and hence $d_{G^*-S}(u_1)=2a-b-2$, $d_{G^*-S}(u_i)=1$ $(2\leq i\leq b-a+2)$ and $d_{G^*-S}(u_j)=0$ $(b-a+3\leq j\leq b+1)$. Combining $G^*[V(G^*)\setminus T]=K_{n-b-1}$, $n-b-1\geq t+2$, $t(G^*)\geq 1$ and Lemma \ref{lemma2.6}, we obtain that for any $\emptyset \neq X\subseteq T$,
\begin{eqnarray}\label{equation4.7}
a+|N_{G^*-S}(X)|\geq |X|+1.
\end{eqnarray}
Then for any $X\subseteq T\setminus Z$,
\begin{eqnarray}\label{equation4.8}
|N_{G^*-S}(X)|=|N_{G^*-S}(X\cup Z)|\geq |X|.
\end{eqnarray}
Therefore $u_2,\dots,u_{b-a+2}$ have distinct single neighbors in $V(G^*)\setminus (S\cup T)$. Let $Q=N_{G^*-S}(\{u_2,\dots,u_{b-a+2}\})$. It follows from \eqref{equation4.8} that
$$|N_{G^*-S}(u_1)\setminus Q|=|N_{G^*-S}(T\setminus Z)|-|Q|\geq 1,$$
and hence $N_{G^*-S}(u_1)\setminus Q\neq \emptyset$. Conversely, if $u_2,\dots,u_{b-a+2}$ have distinct single neighbors in $V(G^*)\setminus (S\cup T)$ and $N_{G^*-S}(u_1)\setminus Q\neq \emptyset$, we can deduce that \eqref{equation4.7} holds, which implies that $t(G^*)\geq 1$. If $2a-b-3=0$, then $u_1,u_2,\dots,u_{b-a+2}$ have distinct single neighbors in $V(G^*)\setminus (S\cup T)$, so $G^*\cong {T}_{n,1}^{a,b}$. Assume $2a-b-3\geq 1$. Observe that $|Q\cap N_{G^*-S}(u_1)|\leq \min\{b-a+1,2a-b-3\}$. Next we prove that $|Q\cap N_{G^*-S}(u_1)|=\min\{b-a+1,2a-b-3\}$. Suppose to the contrary that $|Q\cap N_{G^*-S}(u_1)|<\min\{b-a+1,2a-b-3\}$. Since $|Q\cap N_{G^*-S}(u_1)|<b-a+1$ and $|Q\cap N_{G^*-S}(u_1)|<2a-b-3$, there exist $w_j\in Q\setminus N_{G^*-S}(u_1)$ and $w_l\in N_{G^*-S}(u_1)\setminus Q$. Let $N_{G^*-S}(u_i)=\{w_j\}$ for some $2\leq i\leq b-a+2$. Let $G'=G^*-u_iw_j+u_iw_l$. Note that $u_2,\dots,u_{b-a+2}$ have distinct single neighbors in $V(G')\setminus (S\cup T)$ and $N_{G'-S}(u_1)\setminus (Q\cup \{w_l\})\neq \emptyset$. Hence $t(G')\geq 1$. Clearly, $\delta(G')\geq a$ and $G'$ satisfies \eqref{equation1.1}. By Corollary \ref{corollary2.1}, $G'$ has no $[a,b]$-factor, and hence $G'\in \mathcal{T}_n(a,b)$. By $A(G^*)\boldsymbol{x}=\rho^*\boldsymbol{x}$, $(\rho^*+1)(x_{w_l}-x_{w_j})=x_{u_1}-x_{u_i}>0,$ implying $x_{w_l}>x_{w_j}$. By Lemma \ref{lemma2.3}, $\rho^*<\rho(G')$, which contradicts the maximality of $\rho^*$. Then $|Q\cap N_{G^*-S}(u_1)|=\min\{b-a+1,2a-b-3\}$, which implies that $G^*\cong{T}_{n,1}^{a,b}$.
\end{proof}

Now we are ready to provide the proof of Theorem \ref{theorem1.7}.

\medskip
\noindent  \textbf{Proof of Theorem \ref{theorem1.7}.}
By Lemma \ref{lemma2.12}, ${T}_{n,2}^{a,b}\in \mathcal{T}_n(a,b)$ and ${T}_{n,1}^{a,b}$ also belongs to $\mathcal{T}_n(a,b)$ when $b\leq 2a-3$. By Lemma \ref{lemma4.1}, we have $G^*\cong {T}_{n,2}^{a,b}$ when $s=a-1$ and $G^*\cong {T}_{n,1}^{a,b}$ when $s=a$ and $b\leq 2a-3$. If $b>2a-3$, then we prove $s=a-1$. Suppose to the contrary that $s=a$. It follows from \eqref{equation4.4} that $\sum_{x\in T}d_{G^*-S}(x)=a-1.$ Since $b>2a-3$ and $G^*[V(G^*)\setminus (S\cup T)]$ is a nonempty clique, we obtain
\begin{eqnarray*}
c(G^*-S)&\geq& t+1-e(G^*[T])-|[V(G^*)\setminus (S\cup T),T]|_{G^*}\\
&\geq& t+1-\sum_{x\in T}d_{G^*-S}(x)=b-a+3>a=|S|,
\end{eqnarray*}
contradicting $t(G^*)\geq 1$. Therefore $s=a-1$. So $G^*\cong {T}_{n,2}^{a,b}$ when $b>2a-3$. Now assume that $b\leq 2a-3$. Recall the definitions of ${T}_{n,1}^{a,b}$ and ${T}_{n,2}^{a,b}$. Note that
\begin{eqnarray*}
P({T}_{n,1}^{a,b})-P({T}_{n,2}^{a,b})=3a^2-4ab-3a+b^2+5b+1 \equiv 1 (\rm{mod}~2).
\end{eqnarray*}
Hence $3a^2-4ab-3a+b^2+5b+1\neq 0$. If $3a^2-4ab-3a+b^2+5b+1>0$, then $P({T}_{n,1}^{a,b})>P({T}_{n,2}^{a,b})$. By Lemma \ref{lemma2.7}, $\rho({T}_{n,1}^{a,b})>\rho({T}_{n,2}^{a,b})$. Similarly, we can deduce that $\rho({T}_{n,1}^{a,b})<\rho({T}_{n,2}^{a,b})$ when $3a^2-4ab-3a+b^2+5b+1<0$. Hence $\rho({T}_{n,1}^{a,b})>\rho({T}_{n,2}^{a,b})$ if and only if $3a^2-4ab-3a+b^2+5b+1>0$. By the maximality of $\rho^*$, it follows that $G^*\cong {T}_{n,1}^{a,b}$ when $b\leq 2a-3$ and $3a^2-4ab-3a+b^2+5b+1>0$ and $G^*\cong {T}_{n,2}^{a,b}$ when $b\leq 2a-3$ and $3a^2-4ab-3a+b^2+5b+1<0$. Hence $G^*\cong \mathcal{T}_n^{a,b}$.
\hspace*{\fill}$\Box$

\vspace{14mm}
\noindent
{\bf Declaration of competing interest}
\vspace{3mm}

The authors declare that they have no known competing financial interests or personal relationships that could have appeared to influence the work reported in this paper.

\vspace{4mm}
\noindent
{\bf Data availability}
\vspace{2mm}

No data were used for the research described in this paper.

\end{document}